\newtheorem{dummy}{anything}[section]
\newtheorem{theorem}[dummy]{Theorem}
\newtheorem{lemma}[dummy]{Lemma}
\newtheorem{corollary}[dummy]{Corollary}
\theoremstyle{definition}
\newtheorem{definition}[dummy]{Definition}
\newtheorem{example}[dummy]{Example}
\newtheorem{remark}[dummy]{Remark}
\renewcommand{\labelenumi}{(\roman{enumi})}
\renewcommand
{\theequation}{\thesection.\arabic{dummy}}
\newcommand
{\eqncount}{\setcounter{equation}{\value{dummy}}%
\addtocounter{dummy}{1}}
\newcommand{\cH}{\mathcal H}
\newcommand{\cM}{\mathcal M}
\newcommand{\cP}{\mathcal P}
\newcommand{\cS}{\mathcal S}
\newcommand{\cD}{\mathcal D}
\newcommand{\cE}{\mathcal E}
\newcommand{\cT}{\mathcal T}
\newcommand{\bF}{\mathbf F}
\newcommand{\bH}{\mathbf H}
\newcommand{\bZ}{\mathbb Z}
\newcommand{\bQ}{\mathbf Q}
\newcommand{\bC}{\mathbb C}
\newcommand{\bR}{\mathbb R}
\newcommand{\fu}{\mathfrak u}
\newcommand{\fv}{\mathfrak v}
\newcommand{\fw}{\mathfrak w}
\newcommand{\fo}{\mathfrak o}
\newcommand{\cy}[1]{\bZ/{#1}}
\newcommand{\Zhat}{\widehat \bZ}
\newcommand{\Qhat}{\widehat \bQ}
\newcommand{\wX}{\widetilde X}
\newcommand{\wK}{\widetilde K}
\newcommand{\bd}{\partial}
\newcommand{\vv}{\, | \,}
\newcommand{\trf}{tr{\hskip -1.8truept}f}
\newcommand{\ZG}{\bZ G}
\newcommand{\QG}{\bQ G}
\newcommand{\La}{\Lambda}
\newcommand{\Zpi}{\bZ \pi}
\newcommand{\Ga}{\Gamma}
\newcommand{\ra}{\rightarrow}
\renewcommand{\restriction}{\mathord{\upharpoonright}} 
\def\:{\mkern 1.2mu \colon}
\newcommand{\mmatrix}[4]{\left (\vcenter
{\xymatrix@C-2pc@R-2pc{#1&#2\\#3&#4} } \right )}
\newcommand{\mysquare}[4]{\vcenter
{\xymatrix
{#1\ar[r]\ar[d]&#2\ar[d]\\#3\ar[r]&#4} }}
\newcommand{\wbar}[1]{\overset{\, \hrulefill\, }{#1}}
\newcommand{\disjointunion}{\hbox{$\perp\hskip -4pt\perp$}}
\DeclareMathOperator{\Hom}{Hom} \DeclareMathOperator{\wh}{Wh}
\DeclareMathOperator{\Mod}{\,mod} \DeclareMathOperator{\rank}{rank}
\DeclareMathOperator{\Sharp}{\sharp} \DeclareMathOperator{\im}{im}
\DeclareMathOperator{\Isom}{Isom}
\DeclareMathOperator{\colim}{colim}
\DeclareMathOperator{\Homeo}{Homeo}
\DeclareMathOperator{\Diff}{Diffeo}
\DeclareMathOperator{\hepta}{Aut}
\DeclareMathOperator{\projdim}{projdim}
\DeclareMathOperator{\id}{id}\DeclareMathOperator{\coker}{coker}
\DeclareMathOperator{\Her}{Her} \DeclareMathOperator{\cd}{cd}
\DeclareMathOperator{\vcd}{vcd}  \DeclareMathOperator{\ev}{ev}
\DeclareMathOperator{\pr}{pr}   \DeclareMathOperator{\p}{p}
\DeclareMathOperator{\Lk}{Lk} 
\newcommand{\homeo}[1]{\Homeo(#1)}
\newcommand{\homeopt}[1]{\Homeo_{\bullet} (#1)}
\newcommand{\diff}[1]{\Diff(#1)}
\newcommand{\diffpt}[1]{\Diff_{\bullet} (#1)}
\newcommand{\he}[1]{\hepta(#1)}
\newcommand{\hept}[1]{\hepta_{\bullet}(#1)}
\newcommand{\heq}[1]{\cE (#1)}
\newcommand{\heqpt}[1]{\cE_{\bullet} (#1)}
\newcommand{\htildeM}{\widetilde \cH (M)}
\newcommand{\htildeB}{\widetilde \cH (B)}
\newcommand{\wB}{\widetilde B}
\newcommand{\hM}{\cH (M)}
\newcommand{\hB}{\cH (B)}
\newcommand{\quadtypeMb}{[\pi, \pi_2, k_M, s_M]}
\newcommand{\quadtypeMw}{[\pi, \pi_2, s_M, w_2]}
\newcommand{\quadtypeM}{[\pi, \pi_2, s_M]}
\newcommand{\quadtypecM}{[\pi, \pi_2, c_*[M]]}
\newcommand{\quadtypecMw}{[\pi, \pi_2, c_*[M], w_2]}
\newcommand{\Ospin}{\Omega^{Spin}}
\newcommand{\rOspin}{\widehat\Omega^{Spin}}
\newcommand{\Ostop}{\Omega^{SO}}
\newcommand{\Kw}{KH_2(M;\cy 2)}
\newcommand{\Bw}{B\langle w_2\rangle}
\newcommand{\Mw}{M\negthinspace\langle w_2\rangle}
\newcommand{\whept}[1]{\hepta_{\bullet}(#1,w_2)}
\newcommand{\whtildeM}{\widetilde \cH (M,w_2)}
\newcommand{\whtildeB}{\widetilde \cH (B,w_2)}
\begin{document}
\title[Perfect Discrete Morse Functions on Connected Sums]
{Perfect Discrete Morse Functions on Connected Sums}

\subjclass[2010]{Primary:57R70, 37E35; Secondary:57R05}
\keywords{Perfect discrete Morse function, discrete vector field, connected sum.}

\author{Ne\v{z}a Mramor Kosta, Mehmetc{\.I}k Pamuk and Han{\.I}fe Varl{\i}}

\address{Faculty of Computer and Information Science 
\newline\indent
and Institute of Mathematics, Physics and Mechanics, 
\newline\indent
University of Ljubljana
\newline\indent
Ljubljana, Jadranska 19, Slovenia} \email{neza.mramor@fri.uni-lj.si}

\address{Department of Mathematics
\newline\indent
Middle East Technical University
\newline\indent
Ankara 06531, Turkey} \email{mpamuk{@}metu.edu.tr}

\address{Department of Mathematics
\newline\indent
Middle East Technical University
\newline\indent
Ankara 06531, Turkey}  \email{hisal@metu.edu.tr}

\date{\today}

\begin{abstract}\noindent
We study perfect discrete Morse functions on closed oriented $n$-dimensional manifolds.  We show how to compose such functions on 
connected sums of closed oriented manifolds and how to decompose them on connected sums of closed oriented surfaces.
\end{abstract}
\maketitle

\section{INTRODUCTION}

Since it was introduced by Marston Morse in the $1920$s, Morse theory has been a powerful tool in the study of smooth manifolds.  
It allows to describe 
the topology of a manifold in terms of the cellular decomposition generated by the critical points of a smooth map defined on it.  
By analyzing the function's critical points, it is possible to construct a cell structure for the manifold.  

In the $1990$s  Robin Forman developed a discrete version of Morse theory that turned out to be an efficient method
for the study of the topology of discrete objects, such as regular cell complexes.  As in the smooth setting,
changes in the topology are deeply related to the presence of critical cells of a discrete Morse function.   The analysis of the evolution of the homology of
the cell complexes can be a very useful tool, for example in computer vision to deal with shape recognition
problems by means of topological shape descriptors, and in topological data analysis, where new information can be extracted from the data. 
It can also be used for efficient computation of homology of the cell complex \cite{forman1},\cite{Mischaikow}. Many of the familiar results 
from the smooth theory apply in the discrete setting.  

A discrete Morse function on a cell complex is an assignment of a real number to each cell in such a way that the natural order given by the dimension of cells 
is respected, except at most in one (co)face for each cell. A discrete Morse function on a regular cell complex induces a partial 
pairing on the cells called a discrete vector field. It consists of pairs of cells of two consecutive dimensions 
on which the function reverses the order given by the dimension.  Those cells that do not belong to 
any pair are precisely the critical cells of the map. A gradient path  is 
a connected sequence of the pairs on which the function is non-increasing.  The idea of discrete gradient path 
plays a central role in Forman's theory, due to the fact that the properties related to the critical 
elements are more easily visualized.  Moreover, the discrete gradient paths give us an 
easy and efficient method for getting examples of such functions verifying certain initial conditions (for example, 
having a given number of critical cells) avoiding rather complicated numerical assignments.  On the other hand, since 
different discrete Morse functions may induce the same discrete gradient field, working with a vector field instead 
of a Morse function may lead to some disambiguities about values of a function on cells.  We will point out this fact 
again later in Section $3$.

The discrete Morse functions that have as few critical cells as possible have been widely studied in the literature 
(see \cite{hersh}, \cite{lewiner}).  The number of critical $n$-cells of a discrete Morse function is greater than 
or equal to the $n$-th Betti number of the cell complex by Morse inequalities \cite[Corollary 3.7]{forman1}.   
For a perfect discrete Morse function these two quantities are the same.  Although there are complexes which do not admit 
perfect discrete Morse functions, these functions are the most suitable for combinatorial and computational purposes and they have been studied 
greatly.  

In this paper we study perfect discrete Morse functions on connected sums of triangulated manifolds and consider the problem of (de)composing 
such maps as perfect discrete Morse functions on the summands. We show that a perfect discrete Morse function on a connected sum of triangulated manifolds of any dimension is obtained by 
combining perfect discrete Morse functions on the summands, after possibly minor subdivisons in a neighborhood of the connecting sphere (cf. Theorem \ref{compose}). On decomposing a given perfect discrete Morse function, we consider only the $2$-dimensional case (cf. Theorem \ref{decompose}). For a triangulated connected sum 
$M_1\# M_2$ of two closed connected surfaces with a given perfect discrete Morse function  we give an explicit construction of a separating circle $C$ such that 
the given perfect discrete vector field restricts to a perfect discrete vector field on each summand. Such a construction can be implemented in the form of an algorithm which could be useful in applications of topological methods for example to data, since it enables a subdivision of the data analyzed into smaller, simpler parts. As we will show in a further paper, our proof generalizes to the $3$-dimensional case, but in higher dimensions new phenomena appear. 

The paper is organized as follows:  In section $2$, we recall necessary basic notions of discrete Morse theory for our proof.  In 
section $3$, we prove how to compose a perfect discrete Morse function on a connected sum of triangulated $n$-dimensional manifolds.  
In section $4$, we show how to decompose a perfect discrete Morse function on surfaces and give an example about decomposing a perfect 
discrete Morse function on genus $2$ surface.

\vskip 0.2cm
\noindent{\bf Acknowledgements.} 
This research was supported by the Slovenian-Turkish grants BI-TR/12-14-001 and 111T667, and by the ESF Research Network Applied and Computational Algebraic Topology (ACAT).


\section{Preliminaries}
In this section we recall necessary basic notions of discrete Morse theory.  For more details we refer the reader to \cite{forman1} and \cite{forman2}.
Throughout this section $K$ denotes a finite regular cell complex.  We write $\tau > \sigma$ if $\sigma \subset \overline{\tau}$ (closure of $\tau$). 
 
A discrete function $f\colon K\rightarrow \bR$ on $K$ associates values to the cells of $K$ such that for any $p$-cell
$\sigma \in K$ each $(p-1)-$face $\sigma<\tau$ except at most one has value $f(\sigma)<f(\tau)$, and each $(p+1)-$coface $\nu>\tau$ except at most one 
has value $f(\nu)>f(\tau)$. The cells of $K$ are subdivided into critical and regular cells, where the critical cells are cells where none of the above 
exceptions occur, and regular cells appear in disjoint pairs which form the gradient vector field 
$$
V = \{(\sigma, \tau) \mid \dim \sigma=\dim \tau-1, \sigma<\tau, f(\sigma)\geq f(\tau).
$$

We draw arrows to represent the vector field as follows:
If $\tau^{(p+1)} > \sigma^{(p)}$ and $f(\sigma) \geq f(\tau)$ then we draw an arrow from $\sigma$ to $\tau$ as in Figure \ref{fig:1} .  

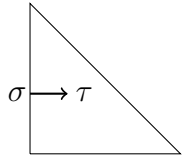
\begin{figure}[hbt]
\centering
\begin{tikzpicture}
\draw (0,0) -- (0,2) -- (2,0)-- (0,0);
\draw[thick,->] (0,0.8) -- (0.5,0.8);
\node[inner sep=0,anchor=west,text width=3.3cm] (note1) at (0.6,0.8) {$\tau$};
\node[inner sep=0,anchor=west,text width=3.3cm] (note1) at (-0.3,0.8) {$\sigma$};
\end{tikzpicture}
\caption{$f(\sigma) \geq f(\tau)$.} 
\label{fig:1}
\end{figure}
\noindent

A cell is critical if and only if it is neither the tail nor the head of an arrow.  
 
Critical cells are related with the topology of the complex as in the case of a smooth manifold with a smooth Morse function, that is, a regular cell complex $K$ 
with a discrete Morse function $f$ has the homotopy type of a CW complex with cells that correspond to the critical cells of the function, and the number of critical 
cells is bounded by the Betti numbers of $K$ in terms of Morse inequalities :
 
\begin{enumerate}
\item $m_{p}(f)-m_{p-1}(f)+ \ldots \pm m_{0} \geq b_{p}-b_{p-1}+ \ldots \pm b_{0}$,
\item $m_{p}(f) \geq b_{p}$,
\item $m_{0}(f)-m_{1}(f)+ \ldots +(-1)^{n}m_{n}(f)=b_{0}-b_{1}+ \ldots +(-1)^{n}b_{n}$.
\end{enumerate}
where $b_{p}$ is the $p$-th Betti number of $K$, and $m_{p}(f)$ is the number of critical $p$-cells of $f$  
for $p=0, 1, \ldots, n $ where $n$ is the dimension of $K$.  

Pairs of regular cells connect into gradient paths along which function values of $f$ descend.  A gradient path or a $V$-path of dimension $(p+1)$ is a sequence of cells 
$$
\sigma_{0}^{(p)}\to \tau_{0}^{(p+1)}>\sigma_{1}^{(p)}\to \tau_{1}^{(p+1)}>\cdots \to \tau_{r}^{(p+1)}>\sigma_{r+1}^{(p)}
$$  
such that $(\sigma_{i}^{(p)}<\tau_{i}^{(p+1)})\in V$ and $\sigma_{i}^{(p)} \neq \sigma_{i+1}^{(p)} < \tau_{i}^{(p+1)}$ for each $i = 0, 1, \ldots,  r$.  

Gradient paths are represented by a sequnce of arrows. Since function values along a gradient path of length more than $2$ must decrese, gradient paths cannot form cycles.  

In discrete Morse theory, gradient vector fields are often more useful than the underlying discrete Morse functions for the combinatorial purposes. 
Let us define an equivalence relation between discrete Morse functions.

\begin{definition}{\label{equal}}
Two discrete Morse function $f$ and $g$ are called equivalent if for every pair of cells $(\tau^{(p)}<\sigma^{(p+1)})$ in $K$,
$$
f(\sigma)<f(\tau) \quad  \text{if and only if} \quad  g(\sigma)<g(\tau).
$$
\end{definition}

The following theorem gives us ground to work with the gradient vector fields instead of the function values.

\begin{theorem}(\cite{afv2}){\label{equivalent}}
Two discrete Morse function $f$ and $g$ defined on a simplicial complex $K$ are equivalent if and only if $f$ and $g$ have the same critical cells 
and induce the same gradient vector field.
\end{theorem}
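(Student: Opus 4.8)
The plan is to reduce the whole statement to a single local observation about each face--coface incidence, after which both implications fall out by inspection. Write $V_f$ and $V_g$ for the gradient vector fields of $f$ and $g$. First I would observe that, for an incident pair $\tau^{(p)}<\sigma^{(p+1)}$, one has $(\tau,\sigma)\in V_f$ precisely when $f$ fails to strictly respect the dimension order on this pair, that is, precisely when it is \emph{not} the case that $f(\sigma)<f(\tau)$; here I use the convention, standard in this setting and in particular in \cite{afv2}, that a discrete Morse function assigns distinct values to the two cells of any incident pair, so that the non-strict inequality $f(\sigma)\ge f(\tau)$ defining $V$ and the negation $\neg\bigl(f(\sigma)<f(\tau)\bigr)$ coincide. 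Consequently, for any fixed incident pair, the biconditional ``$f(\sigma)<f(\tau)\iff g(\sigma)<g(\tau)$'' of Definition~\ref{equal} holds if and only if ``$(\tau,\sigma)\in V_f$ exactly when $(\tau,\sigma)\in V_g$''.

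Granting this, the forward implication is immediate: if $f$ and $g$ are equivalent, then the biconditional holds at every incident pair, so $V_f$ and $V_g$ consist of exactly the same pairs, that is, $f$ and $g$ induce the same gradient vector field; and since (as recorded just after Figure~\ref{fig:1}) a cell is critical exactly when it is neither the head nor the tail of an arrow of the gradient vector field, the equality $V_f=V_g$ forces $f$ and $g$ to have the same critical cells. For the converse I would assume that $f$ and $g$ induce the same gradient vector field --- the hypothesis that they also have the same critical cells is not needed, since it already follows from $V_f=V_g$. Then for every incident pair $\tau<\sigma$ one has $(\tau,\sigma)\in V_f$ exactly when $(\tau,\sigma)\in V_g$, which by the observation above is precisely $f(\sigma)<f(\tau)\iff g(\sigma)<g(\tau)$; since this holds for all incident pairs, $f$ and $g$ are equivalent.

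There is no serious obstacle here: no reasoning about $V$-paths, homotopy type, or the Morse inequalities is required, and the whole argument stays local at individual face--coface incidences. The one step that genuinely requires attention is the initial observation identifying membership in the gradient vector field with the strict inequality of Definition~\ref{equal} --- that is, matching the non-strict inequality $f(\sigma)\ge f(\tau)$ used to define $V$ with the strict condition $f(\sigma)<f(\tau)$. This is exactly where the genericity (injectivity) convention on discrete Morse functions enters; without such a convention one should instead read Definition~\ref{equal} with ``$\le$'' in place of ``$<$'', after which the same argument applies verbatim.
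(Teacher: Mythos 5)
The paper does not actually prove Theorem \ref{equivalent}: it is quoted from \cite{afv2}, so there is no in-paper argument to compare yours against. Judged on its own, your proof is essentially correct and is the natural one: once you identify, incidence by incidence, membership of a face--coface pair in the gradient field with the strict inequality appearing in Definition \ref{equal}, both implications are immediate, and your remark that ``same critical cells'' is redundant given ``same gradient vector field'' (a cell is critical exactly when it is unpaired) is also correct.

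Two comments. First, there is a directional slip in your key local observation: with the labelling $\tau^{(p)}<\sigma^{(p+1)}$ of Definition \ref{equal}, the pair lies in $V_f$ when $f(\tau)\ge f(\sigma)$, i.e.\ (under genericity) exactly when $f(\sigma)<f(\tau)$ \emph{holds}, not when it fails; ``failing to respect the dimension order'' is $\neg\bigl(f(\tau)<f(\sigma)\bigr)$, whereas you wrote $\neg\bigl(f(\sigma)<f(\tau)\bigr)$. The slip is harmless, because a biconditional is unchanged when both sides are negated, so your conclusion ``the pair satisfies the condition of Definition \ref{equal} iff $V_f$ and $V_g$ agree at that incidence'' survives either way; still, the observation should be stated with the correct sign. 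Second, you are right that the genericity convention is genuinely needed and not cosmetic: under the definition reproduced in Section 2 a regular pair may satisfy $f(\tau)=f(\sigma)$, and then the theorem as literally stated fails. For instance, on a single $1$-simplex $e$ with vertices $v_1,v_2$, take $f(v_1)=0$, $f(v_2)=f(e)=1$ and $g(v_1)=0$, $g(e)=1$, $g(v_2)=2$: both are discrete Morse functions with the same gradient field $\{(v_2,e)\}$ and the same critical cell $v_1$, yet they are not equivalent in the sense of Definition \ref{equal}. So one must either assume distinct values on the two cells of any incident pair (as \cite{afv2} in effect does) or read Definition \ref{equal} with $\le$; under either reading your argument is complete.
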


In this paper we study a special type of discrete Morse functions,  where the Morse inequalities are actually equalities. As in the smooth case, 
such Morse functions are called perfect (with respect to the given coefficient ring, which is fixed throughout most of this paper and suppressed in our notation).

\begin{definition}(\cite{ayala}){\label{perfect}}
A discrete Morse function $f\colon K \to \bR$ and its discrete gradient field are called perfect if for each $p$,
$$ 
m_{p}(f) = b_{p} = \rank H_{p}(K).
$$ 
\end{definition}

Perfect discrete Morse functions have the least number of critical cells by means of the Morse inequialities.  Therefore, they are the most 
suitable functions for combinatorial and computational purposes.  Existence of such functions has both theoretical and practical applications.  An arbitrary 
CW-complex may not have a perfect discrete Morse function defined on it. For example, any torsion in homology is an obstruction to a $\mathbb{Z}$-perfect discrete Morse function and any complex that is acyclic (homologically trivial) and non-collapsible e.g. the dunce hat and Bing's House cannot admit a perfect discrete Morse function. Also, every sphere of dimension $d>4$ has a triangulation which does not admit a perfect discrete Morse function \cite{Benedetti}.  On the other hand, it is easy to see that every 1-dimensional complex (i.e. graph) has a perfect discrete Morse function, and in dimension $2$ it is known that every closed, oriented surface has a $\mathbb{Z}$-perfect discrete Morse function \cite{lewiner}, every closed surface has a perfect $\mathbb{Z}_2$-Morse function \cite{ayala}, and every $2$-dimensional subcomplex of a $2$-manifold has a $\mathbb{Z}_2$-perfect discrete Morse function \cite{HKN}. Spheres are characterized by the existence of a perfect discrete Morse
  function in the sense that every triangulated manifold with exactly two critical cells is a sphere, and for every sphere there exists a triangulation with exactly two critical cells \cite{forman1}. Finding perfect discrete Morse functions is a difficult problem, as Joswig and Pfetsch \cite{JP} have shown it is NP-hard even on 2-dimensional complexes. 

\begin{figure}[h]
\centering
\begin{tikzpicture}
\draw (0,0)  -- (1,1) -- (2,0)-- (0,0);
\draw (0.5,0.5) node[circle,fill,inner sep=1pt] {};
\draw[thick,->] (2.2,0.5) -- (2.7,0.5);
\end{tikzpicture}
\qquad
\begin{tikzpicture}
\draw (0,0)  -- (1,1) -- (2,0)-- (0,0);
\draw (0.5,0.5) node[circle,fill,inner sep=1pt] {};
\draw (1.5,0.5) node[circle,fill,inner sep=1pt] {};
\draw[thick,->] (2.2,0.5) -- (2.7,0.5);
\end{tikzpicture}
\qquad
\begin{tikzpicture}
\draw (0,0)  -- (1,1) -- (2,0)-- (0,0);
\draw (0.5,0.5) node[circle,fill,inner sep=1pt] {};
\draw (1.5,0.5) node[circle,fill,inner sep=1pt] {};
\draw (0.5,0.5)  -- (1.5,0.5);
\end{tikzpicture}
\caption{A sequence of bisections.}
\label{fig:bisection}
\end{figure}
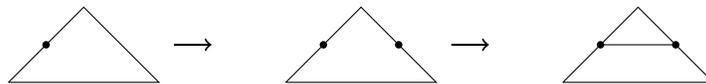

Before we finish this section, let us point out that along the process of (de)composing discrete Morse functions one 
may need to subdivide some of the cells in the complex.  As in (cf. \cite{stallings}, \cite{forman1}), a bisection 
of a cell will be a subdivision of a single cell  into two, as shown in Figure \ref{fig:bisection}. 


\section{Composing  Perfect Discrete Morse Functions on a Connected Sum}

In this section, we prove our first results.  We show how to compose two perfect discrete Morse functions on triangulated manifolds. Our method is geometrically streightforward. The construction of the perfect discrete Morse function is similar to a method of Benedetti \cite{Benedetti} to prove that perfect discrete Morse functions can be combined into a perfect discrete Morse function on the disjoint union, which he then used to show that a handlebody decomposition of a triangulated manifold gives a discrete Morse function on it with critical cells exactly corresponding to the handles in the decomposition, thus extending a similar result of Gallais \cite{Gallais} on 3-manifolds to the general case.  

Let $M=M_{1}\# M_{2} $ be the connected 
sum of two closed, oriented, triangulated $n$ dimensional manifolds, and $f_{1}$ and $f_{2}$ be perfect discrete Morse functions on $M_{1}$  and  $M_{2}$ respectively.  
We form the connected sum  $M=M_{1}\# M_{2}$ by removing an $n$-cell of $M_2$ with minimal vertex in its boundary and the critical $n$-cell of $M_1$.
\begin{theorem} \label{compose}
	Let $M=M_{1}\# M_{2}$ be given as above.  Then, there exists a polyhedral subdivision $\widetilde{M}$ of $M$ and a perfect discrete Morse function $f$ on  $\widetilde{M}$ 
	that agrees, up to a constant on each summand with $f_{1}$ and $f_{2}$, except on a neighbourhood of the two removed cells.
\end{theorem}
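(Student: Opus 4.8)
The plan is to pass from the functions to their gradient vector fields, which is permitted by Theorem~\ref{equivalent}, and to reduce the whole construction to a local modification near the connecting sphere. First the bookkeeping. Since each $M_i$ is closed, connected and oriented we have $b_0(M_i)=b_n(M_i)=1$, so a perfect discrete Morse function on $M_i$ has exactly one critical $0$-cell and exactly one critical $n$-cell; the latter for $M_1$ is the cell $\Delta_1$ that we remove. Writing $M=M_1\#M_2=(M_1\setminus D^n)\cup_{S^{n-1}}(M_2\setminus D^n)$, a Mayer--Vietoris computation gives $b_0(M)=b_n(M)=1$ and $b_p(M)=b_p(M_1)+b_p(M_2)$ for $0<p<n$. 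Hence it suffices to build, on a polyhedral subdivision $\widetilde M$ of $M$, an acyclic discrete vector field $V$ with one critical $0$-cell, one critical $n$-cell and $b_p(M_1)+b_p(M_2)$ critical $p$-cells for each $0<p<n$, agreeing with the gradient field $V_1$ of $f_1$ on $M_1$ and with the gradient field $V_2$ of $f_2$ on $M_2$ outside a neighbourhood of the two removed cells; any discrete Morse function inducing such a $V$ (one exists because an acyclic discrete vector field is always the gradient field of some discrete Morse function) can then be translated by a constant on each summand so as to coincide there with $f_1$, respectively $f_2$.

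Next, the two removals, which are both easy. Let $M_1^{\circ}$ be the subcomplex of all cells of $M_1$ other than $\Delta_1$, i.e.\ $M_1$ with the open cell $\Delta_1$ removed. Because $\Delta_1$ is critical it is the head of no arrow, so $V_1$ is already a discrete vector field on $M_1^{\circ}$, still acyclic, with critical cells precisely those of $V_1$ apart from $\Delta_1$; moreover every $(n-1)$-face $\rho<\Delta_1$ satisfies $f_1(\rho)<f_1(\Delta_1)$, so in $M_1^{\circ}$ such a $\rho$ is either critical or paired with the other $n$-cell containing it. On the $M_2$ side, choose a non-critical $n$-cell $\Delta_2$ having the minimal (critical) vertex $v$ of $f_2$ in its boundary; such a cell exists because $v$ lies on more than one $n$-cell of $M_2$ and only one of them is critical. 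Let $(\sigma,\Delta_2)\in V_2$ be the unique pair containing $\Delta_2$, and let $M_2^{\circ}$ be $M_2$ with the open cell $\Delta_2$ removed. Then $V_2\setminus\{(\sigma,\Delta_2)\}$ is an acyclic discrete vector field on $M_2^{\circ}$ whose critical cells are those of $V_2$ together with the orphaned $(n-1)$-cell $\sigma$; in particular $v$ and the critical $n$-cell of $M_2$ survive, and $v$ is a vertex of $\partial\Delta_2$.

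Now glue. The boundaries $\partial\Delta_1$ and $\partial\Delta_2$ are both combinatorially $\partial\Delta^n$, so we identify them by a simplicial isomorphism, obtaining a sphere $S$, and form $\widetilde M_0=M_1^{\circ}\cup_S M_2^{\circ}$, a triangulation of $M$. Away from $S$ the fields $V_1$ and $V_2$ already furnish the required pairs; the whole difficulty is to turn their superposition into a legitimate acyclic field on a suitable polyhedral subdivision $\widetilde M$ of $\widetilde M_0$ with exactly the right critical cells, and this is where the minor subdivisions in a neighbourhood of the connecting sphere occur. Along $S$ two things can go wrong: a cell of $S$ may be claimed by a pair from each side, and counting critical cells shows that the superposition has one surplus critical $0$-cell and one surplus critical $(n-1)$-cell relative to the target, morally the cells $v$ and $\sigma$ contributed from the $M_2$ side. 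One repairs this by subdividing a thin collar of $S$ inside each of $M_1^{\circ}$ and $M_2^{\circ}$ and re-pairing there so that across $S$ every cell flows into exactly one of the two sides, and so that the free room created on the $M_1^{\circ}$ side by removing the whole $n$-cell $\Delta_1$ absorbs the descending flow that used to terminate at $v$ (so that $v$ ceases to be a local minimum and is joined through $S$ by a gradient path to the genuine minimum $v_1$ of $M_1$) and provides an $n$-cell into which $\sigma$ can be paired (so that $\sigma$ ceases to be critical). In dimension $2$, where $\dim\sigma=1$ and, after the subdivision, $v$ can be taken to be an endpoint of $\sigma$, this last cancellation is simply the elementary one adding the arrow $v\to\sigma$. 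Once one has checked that no pair is claimed from both sides and that no closed $V$-path has been created, the resulting field $V$ on $\widetilde M$ has one critical $0$-cell, one critical $n$-cell and $b_p(M_1)+b_p(M_2)=b_p(M)$ critical $p$-cells for $0<p<n$; it is therefore perfect, it agrees with $V_1$ on $M_1$ and $V_2$ on $M_2$ off the collar by construction, and Theorem~\ref{equivalent} together with a choice of additive constants yields the perfect discrete Morse function $f$ of the statement.

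I expect the reconciliation along $S$ to be the main obstacle: $V_1|_S$ and $V_2|_S$ are a priori incompatible and both ``want'' to push the cells of $S$ into their own side, so making the combined field a legal discrete vector field while simultaneously cancelling the two surplus critical cells and avoiding closed gradient paths needs a careful, though entirely local, subdivision and re-pairing. The Betti-number bookkeeping and the two removal steps are routine by comparison.
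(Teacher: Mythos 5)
Your setup (Betti-number bookkeeping via Mayer--Vietoris, removal of the critical $n$-cell from $M_1$ and of a non-critical $n$-cell containing the minimal vertex from $M_2$, passage to gradient fields via Theorem~\ref{equivalent}) matches the paper's, and you have correctly located where all the work lies. The problem is that you then do not do that work: the entire content of the proof is the explicit local construction near the connecting sphere, and your proposal replaces it with ``one repairs this by subdividing a thin collar of $S$ \dots and re-pairing there so that \dots'' followed by ``once one has checked that no pair is claimed from both sides and that no closed $V$-path has been created.'' You even flag this reconciliation as ``the main obstacle.'' As written, the claims that the surplus critical vertex $v$ and the orphaned $(n-1)$-cell $\sigma$ can be cancelled, that the conflicting demands of $V_1|_S$ and $V_2|_S$ can be arbitrated, and that acyclicity survives are all asserted, not proved; none of them is automatic (for instance, after your direct gluing the two $n$-cofaces of $\sigma$ in $\widetilde M_0$ may both already be paired, so there is no free $n$-cell for $\sigma$ without further subdivision).

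For comparison, the paper resolves all three issues at once with a concrete construction. On the $M_1$ side it inserts a product tube $L=\partial\alpha\times[0,1]$ glued along $\partial\alpha\times\{0\}$ and pairs every cell $\sigma$ of $\partial\alpha\times\{1\}$ with $\sigma\times(0,1)$; on the $M_2$ side it shrinks $\beta$ towards $v$ by a linear retraction, replacing it by a smaller copy $\beta'$ plus a product collar $J''$, transferring every $V_2$-pair that involved a cell of $\beta$ to the corresponding cell of $J''$ so that all cells of $\partial\beta'$ are left unpaired. Identifying $\partial\beta'$ with $\partial\alpha\times\{1\}$ then produces no conflicts (one side claims nothing on the sphere), every cell of the sphere --- including the image of $v$ --- is paired into the tube, so the surplus critical vertex disappears and no surplus $(n-1)$-cell is ever created, and acyclicity is immediate because all arrows on the sphere point to the same side, so no gradient path can cross it and return. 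A complete proof needs this (or an equivalent) explicit mechanism; your outline stops exactly where it is required. A further small point: the function $f$ of the statement must be defined directly from $f_1$ and $f_2$ (as the paper does, with a shift by a large constant $C$ on the $M_2$ side), since an arbitrary Morse function inducing your field $V$ need not agree with $f_1$ and $f_2$ up to constants.
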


\begin{proof}
	In order to prove the theorem, we will show that the gradient vector fields $V_1$ and $V_2$ of the functions $f_1$ and $f_2$ respectively, induce a discrete vector field $V$ on the connected sum which restricts to $V_1$ and $V_2$ on the two summands, and which has the minimal possible number of critical cells and no loops.  We may assume that the connected sum $M=M_{1}\# M_{2}$ is formed by removing a non-critical $n$-cell $\beta$ of $V_{2}$ with the minimal vertex $v$ in its boundary and the critical $n$-cell $\alpha$ of $V_{1}$.  
	
On $M_1\setminus \alpha$, we attach a tube $L=\partial \alpha\times[0,1]$ with the natural product cell decomposition along  $\partial \alpha\times {\{0\}}$ and extend the discrete vector field $V_{1}|_{M_{1}-\alpha}$ to $(M_{1}-\alpha)\cup L$ so that each cell $\sigma$ in $\partial \alpha\times{\{1\}}$ is paired with the cell $\sigma\times(0,1)$
(see Figure \ref{fig:extending} for $n=2$).  Also note that if $\partial \alpha$ contain any critical $1$-cells that belong to $ M_{1}$, we may do bisections to push these cells in ${M_{1}-\alpha}$.
	
	\begin{figure}[h]
		\centering
		\begin{tikzpicture}
		\draw (0,0)  -- (2,3) -- (4,0)-- (0,0);
		\draw[thick,->] (2,3) -- (2.4,2.4);
		\draw[thick,->] (0,0) -- (0,-0.5);
		\draw[thick,->] (4,0) -- (4.5,0);
		\draw[thick,->] (2,0) -- (2,-0.5);
		\draw[thick,->] (1,1.5) -- (0.5,1.7);
		\node[inner sep=0,anchor=west,text width=3.3cm] (note1) at (1.9,1.2) {$\alpha$};
		\draw[white] (0,-0.5) -- (0,-0.5);
		\end{tikzpicture}
		\qquad
		\begin{tikzpicture}
		\draw (0,0)  -- (0,2) -- (3,2)-- (3,0)-- (0,0);
		\draw (0,2) -- (1.7,3) -- (3,2);
		\draw[dotted] (0,0) -- (1.7,1) -- (3,0);
		\draw[dotted] (1.7,1) -- (1.7,3);
		\draw[thick,dotted,->] (1.13,0.7) -- (0.5,0.8);
		\node[inner sep=0,anchor=west,text width=3.3cm] (note1) at (3.3,3) {$\partial \alpha\times {\{1\}}$};
		\node[inner sep=0,anchor=west,text width=3.3cm] (note1) at (3.3,0.5) {$\partial \alpha\times {\{0\}}$};
		\draw[thick,->] (0,2) -- (0,1.5);
		\draw[thick,->] (1.5,2) -- (1.5,1.5);
		\draw[thick,->] (3,2) -- (3,1.5);
		\draw[thick,dotted,->] (1,2.6) -- (1,2.1);
		\draw[thick,dotted,->] (1.7,3) -- (1.7,2.5);
		\draw[thick,dotted,->] (2.2,2.6) -- (2.2,2.1);
		\draw[thick,->] (0,0) -- (0,-0.5);
		\draw[thick,->] (3,0) -- (3.5,0);
		\draw[thick,->] (1.5,0) -- (1.5,-0.5);
		\draw[thick,dotted,->] (1.7,1) -- (2.09,0.7);
		\end{tikzpicture}
		\caption{The discrete vector field on $\partial \alpha\times\{1\}$.}
		\label{fig:extending}
	\end{figure}
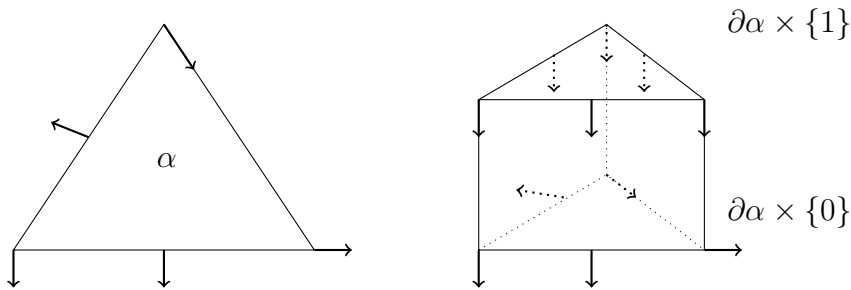
	
	On $M_2$, we subdivide the smallest simplicial complex $J$ consisting of $\beta$ and all its faces in the following way. Let $H: J \times [0,1]\to J $ be a linear (deformation) retraction of $J$ onto the critical vertex $v$. Then, $J'=H_{1/2}(J)$ represents a smaller copy of $J$ and $\beta^{'}=H_{1/2}(\beta)$. The closure $J''$ of the complement $J-J'$  is a product of the link $\Lk_J(v)$ of $v$ in $J$ (that is, the face of $\beta$ opposite to $v$ with all its faces) with the interval $[0,1/2]$, and the simplices of $J''-J'$ are in bijective correspondence to the simplices of $J-v$. 
	(see Figure \ref{fig:dividing} for $n=2$).

	\begin{figure}[h]
		\centering
		\begin{tikzpicture}
		\draw (0,0)  -- (2,3) -- (4,0)-- (0,0);
		\draw[thick,->] (0,0) -- (-0.5,0);
		\draw[thick,->] (4,0) -- (4.5,0);
		\draw[thick,->] (2,0) -- (2,0.5);
		\draw[thick,->] (1,1.5) -- (0.7,2);
		\draw[thick,->] (3,1.5) -- (3.3,2);
		\node[inner sep=0,anchor=west,text width=3.3cm] (note1) at (1.9,1.5) {$\beta$};
		\node[inner sep=0,anchor=west,text width=3.3cm] (note1) at (2,3.1) {$v$};
		\end{tikzpicture}
		\qquad
		\begin{tikzpicture}
		\draw (0,0)  -- (2,3) -- (4,0)-- (0,0);
		\draw (1,1.5)  -- (3,1.5);
		\draw[thick,->] (0,0) -- (-0.5,0);
		\draw[thick,->] (4,0) -- (4.5,0);
		\draw[thick,->] (2,0) -- (2,0.5);
		\draw[thick,->] (0.5,0.75) -- (0.2,1.2);
		\draw[thick,->] (3.5,0.75) -- (3.8,1.2);
		\draw (1,1.5) node[circle,fill,inner sep=1pt] {};
		\draw (3,1.5) node[circle,fill,inner sep=1pt] {};
		\node[inner sep=0,anchor=west,text width=3.3cm] (note1) at (1.9,2.1) {$\beta^{'}$};
		\node[inner sep=0,anchor=west,text width=3.3cm] (note1) at (1.5,0.9) {$\beta - \beta^{'}$};
		\node[inner sep=0,anchor=west,text width=3.3cm] (note1) at (2,3.1) {$v$};
		\end{tikzpicture}
		\caption{Obtaining $\beta^{'}$ with discrete vector field on it.}
		\label{fig:dividing}
	\end{figure}
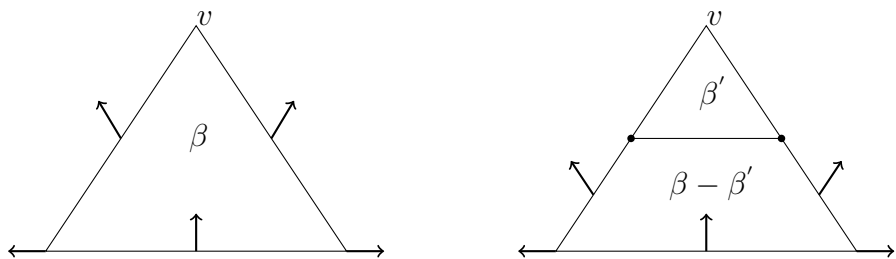

	We extend the vector field $V_2$ on $M_2-J$ to $J''-J'$ so that for any pair $(\sigma,\tau)\in V_{2}$ where $\sigma\in J$ and either $\tau\in J$ or $\tau\in (M_{2}-J)$, there is a corresponding pair $(\sigma'',\tau'')$ where $\sigma''\in J''$ and either $\tau''\in J''$ or $\tau''\in (M_{2}-J)$, and all simplices on $\partial \beta'$ are unpaired.  The connected sum$(\widetilde M)$ is formed by removing $\beta'$ and identifying $\partial \beta^{'}$  with $\partial \alpha\times \{1\}$. Since each cell $\sigma\in\partial \beta'\cong \partial \alpha\times \{1\}$ was unpaired in $V_2$, it can be without conflicts paired according to $V_1$ to the cell $\sigma\times[0,1]$ in $L$. The vector fields on $(M_1-\alpha)\cup L$ and $(M_2-\beta)\cup \beta''$, where $\beta''=\beta-\beta'$, together form a vector field $V$ on $\widetilde{M}$ given by 
	$$V(\gamma)=\left\{\begin{array}{lll}
	V_1(\gamma)&; & \gamma\in M_1-\alpha\\
	\gamma\times(0,1)&; & \gamma\in \partial \alpha\times \{1\}\cong \partial \beta' \\
	
	V_2(\sigma)&; & \gamma=\sigma''\in J''\\
	V_2(\gamma)&; & \gamma\in M_2-J
	\end{array}\right.$$ 
	
	Since $V_{1}$ and $V_{2}$ do not contain loops and all the arrows on the boundary circle point in the same direction (towards $M_{1}$), there cannot be any closed $V$-paths, and we have a discrete gradient vector field on $\widetilde{M}$.  
	Since the maximum $\alpha \in M_1$ has been removed and the minimum $v\in M_2$ has been paired, the numbers of critical cells in $V$  are
	\begin{eqnarray*}
		m_0 &=& 1=b_0(M),\\
		m_1 &=& m_1(f_1)+m_1(f_2)=b_1(M),\\
		m_2 &=& 1=b_2(M).
	\end{eqnarray*} 
	So, $V$ is a perfect discrete vector field. A perfect discrete Morse function of $V$ is given by 
	$$f(\gamma)=\left\{\begin{array}{lll}
	f_1(\gamma)&; & \gamma\in M_1-\alpha\\
	f_1(\tau)+C/2&; & \gamma=\tau\times(0,1]\in L, \tau\in \partial \alpha\\
	f_2(\gamma)+C&; & \gamma\in M_2-J\\
	f_2(\tau)+C&; & \gamma=\tau'' \in J''
	\end{array}\right.$$
	where $C$ is a big enough constant (bigger than $f_1(\alpha)+1$).
\end{proof}

\section{Decomposing Perfect Discrete Morse Functions on Surfaces}

In this section we are going to prove the converse of Theorem \ref{compose} in the case of surfaces. That is, any perfect discrete 
Morse function $f$ on a closed triangulated surface $M$ that is a connected sum $M_1\# M_2$ restricts to perfect discrete Morse 
functions $f_1$ and $f_2$ on the two summands.  Before we state our theorem let us first point out an observation for a 
discrete Morse function on a particular boundary curve of an oriented surface.  Following \cite{bruno} the cells in the boundary 
of a manifold with a discrete Morse function $f$ on it are called boundary critical cells if they are critical for $f$.

\begin{lemma} \label{boundary condition}
	Let $M$ be a closed, oriented surface of genus $g$ and $f$ be a discrete Morse function on $M$.  Let $D$ be 
	an open disk in $M$ which contains exactly one critical $0$-cell or one critical $2$-cell and let $C= \partial (M-D)$.  
	Then the number of boundary critical vertices and edges of $f|_{M-D}$ on $C$ must be equal. 
\end{lemma}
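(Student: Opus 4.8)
The plan is to compare the third Morse relation of Section~2 (the equality $\sum_p(-1)^pm_p=\chi$) for the closed surface $M$ and for the surface-with-boundary $M-D$, and to read the statement off from the difference of the two identities.

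First I would pin down the topology. Since $D$ is an open $2$-disk, $M-D$ is a compact surface whose only boundary component is the circle $C=\partial(M-D)$, and deleting an open disk lowers the Euler characteristic by one, so $\chi(M-D)=\chi(M)-1=(2-2g)-1=1-2g$.

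Next I would track how restricting $f$ to $M-D$ affects critical cells. The gradient vector field $V$ of $f$ is a partial matching on the cells of $M$, and $V|_{M-D}$ retains exactly those pairs both of whose cells lie in $M-D$. A cell of $M-D$ off $C$ has all of its faces and cofaces inside $M-D$, so it is matched in $V|_{M-D}$ exactly when it is matched in $V$; hence the critical cells of $f|_{M-D}$ in the interior of $M-D$ are precisely the critical cells of $f$ that miss $C$, and I write $d_p$ for their number in dimension $p$. A cell on $C$, by contrast, may have its $V$-partner swept away into $D$; since (by the hypothesis on $D$) no critical cell of $f$ lies on $C$, and since $C$ contains no $2$-cell, the boundary critical cells of $f|_{M-D}$ consist of $c_0$ vertices and $c_1$ edges, these being exactly the cells of $C$ whose $V$-arrow points into the removed disk, with no boundary critical $2$-cell.

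Finally I would apply the Morse equality twice. For $M-D$ it reads $(d_0+c_0)-(d_1+c_1)+d_2=\chi(M-D)=1-2g$. For $M$, the critical cells are the $d_p$ interior critical cells of $f|_{M-D}$ together with the single critical cell of $f$ lying inside $D$, of dimension $0$ or $2$; in either case that extra cell contributes $+1$ to $\sum_p(-1)^pm_p$, so $(d_0-d_1+d_2)+1=\chi(M)=2-2g$, i.e. $d_0-d_1+d_2=1-2g$. Subtracting the two identities cancels the interior terms and leaves $c_0-c_1=0$, which is the claim. The step I expect to carry the weight is the middle one: being precise about which cells of $C$ become critical after restriction — exactly those matched by $V$ into $D$ — and invoking the choice of $D$ to ensure $C$ itself stays free of critical cells of $f$. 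That $V|_{M-D}$ remains loop-free is immediate since $V$ is, so no further difficulty enters there.
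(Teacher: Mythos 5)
Your proposal is correct and follows essentially the same route as the paper: compute $\chi(M-D)=\chi(M)-1=1-2g$, apply the Morse (Euler characteristic) equality to both $M$ and $M-D$, and observe that the only discrepancy between the two alternating sums is the removed critical cell (contributing $+1$ in either the $0$- or $2$-dimensional case) together with the boundary critical vertices and edges, forcing their numbers to cancel. Your bookkeeping of exactly which cells of $C$ become critical after restriction (those whose $V$-partner lies in $D$) is a slightly more careful version of the paper's count, but the argument is the same.
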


\begin{proof}
	The Euler characteristic of the closed genus $g$-surface $M$ is $\chi(M)=2-2g$.  Removing a disk decreases Euler characteristic by 
	one.  So, we have
	
	\begin{align}
	\chi(M-D)&=2-2g-1\nonumber\\
	         &=1-2g\label{1}.
	\end{align}
	Assume that $D$ contains only one critical $0$-cell of $f$ in $M$ (the argument is similar if $D$ 
	contains only one critical $2$-cell). Assume also that $f$ has $ m_{0} $ critical $0$-cells, $ m_{1} $ critical $1$-cells and $m_{2}$ critical $2$-cells.  Let $n$ be the number of the boundary critical $0$-cells  and $m$ be the number of the 
	boundary critical $1$-cells on $C$.  Note that these $0$-cells and $1$-cells are critical for $f|_{M-D}$.  Hence, $f|_{M-D}$ has $(m_{0}+n)$ 
	critical $0$-cells, $(m_{1}+m)$ critical $1$-cells and $m_{2}$ critical $2$-cells.  Then, by the Morse inequalities,

		 \begin{align}
	\chi (M)&= m_{0}-m_{1}+m_{2}=n-m+(1-2g)= 2-2g,  \text{  and}\nonumber\\
	\chi (M-D) &=(m_{0}+n-1)-(m_{1}+m)+m_{2}\nonumber\\
	           &= m_{0}-m_{1}+m_{2}+n-m-1\nonumber\\
	           &=2-2g+n-m-1\nonumber\\
	           &= 1-2g+(n-m) \label{2}.
	\end{align}
	The equations (\ref{1}) and (\ref{2}) implies that $n=m$.
\end{proof}

Indeed, $f|_C$ is a discrete Morse function in Lemma \ref{boundary condition} and 

$$
\chi (C) =0=m_{0}(f|_C)-m_{1}(f|_C)
$$ implies that $m_{0}(f|_C)=m_{1}(f|_C)$ where  $m_{0}(f|_C), m_{1}(f|_C)$ are the numbers of critical $0$-cells and critical $1$-cells of $f|_C$, respectively.  However, our main goal in Lemma \ref{boundary condition} is to show that if $C$ bounds a surface which is $M-D$, then the number of the boundary critical $0$-cells and $1$-cells is equal.

In the proof of the main theorem of this section we will also need the following two well known results which we add for the sake of 
completeness of the paper.

\begin{lemma}\label{1-paths}
	On any regular cell complex $1$-paths can merge but cannot split. All $1$-paths in the gradient field of a perfect discrete Morse function 
	on a connected cell complex thus form a tree with root at the minimal vertex.
\end{lemma}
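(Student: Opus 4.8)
The plan is to analyze how $1$-dimensional gradient paths (i.e., sequences alternating between $0$-cells and the $1$-cells pairing them) can interact in the gradient field $V$ of a discrete Morse function. Recall that a $V$-path of dimension $1$ has the form $\sigma_0^{(0)}\to\tau_0^{(1)}>\sigma_1^{(0)}\to\tau_1^{(1)}>\cdots$, where each $(\sigma_i,\tau_i)\in V$ and $\sigma_i\neq\sigma_{i+1}$. First I would establish the ``cannot split'' claim: suppose a $0$-cell $\sigma$ lies on two distinct $1$-paths that agree up to $\sigma$ but then diverge. Splitting at $\sigma$ would require two distinct pairs $(\sigma,\tau)$ and $(\sigma,\tau')$ in $V$ with $\tau\neq\tau'$; but $V$ is a partial matching, so each cell belongs to at most one pair. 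Hence once a $1$-path reaches a given vertex, the next arrow out of it is uniquely determined, so $1$-paths cannot split. On the other hand, merging is possible: two distinct $1$-cells $\tau,\tau'$ in pairs $(\sigma,\tau),(\sigma',\tau')$ can both have the \emph{same} other endpoint, a vertex $\rho$ with $\rho<\tau$ and $\rho<\tau'$, so two $1$-paths can arrive at $\rho$ from different directions; I would simply exhibit this schematically (no obstruction here).

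Next I would pass to the ``perfect'' and ``connected'' hypotheses. Since $f$ is perfect on a connected complex $K$, we have $m_0(f)=b_0(K)=1$, so there is exactly one critical $0$-cell, call it $v_0$. Every other $0$-cell $\sigma$ is the tail of exactly one arrow $(\sigma,\tau)\in V$ (it cannot be the head of an arrow, since heads of arrows in a discrete vector field are the higher-dimensional cells of a pair). Thus from any non-critical vertex there is a unique outgoing $1$-path step, and iterating, a unique maximal $1$-path starting at $\sigma$; since gradient paths cannot form cycles (function values strictly descend along any path of length $>2$, as noted in the preliminaries), this path must terminate, and it can only terminate at a vertex that is \emph{not} the tail of an arrow, i.e., at the unique critical vertex $v_0$.

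To package this as a tree: take the vertex set to be the $0$-cells of $K$, and draw an edge for each pair $(\sigma,\tau)\in V$ with $\dim\sigma=0$, connecting $\sigma$ to the opposite vertex of $\tau$. Each non-root vertex $\sigma$ then has a unique ``parent'' edge (the one coming from its unique outgoing pair), so this graph has exactly one fewer edge than it has vertices among each component; combined with the fact (just proved) that every vertex is connected through such edges to $v_0$, the graph is connected, hence a tree, rooted at $v_0$. The $1$-paths of $V$ are exactly the descending paths in this tree toward the root, which is why they merge (as one moves toward the root) but never split.

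The main obstacle, such as it is, is being careful about the direction of the arrows versus the direction of path-traversal and about degenerate short paths: a $V$-path of length $1$ or $2$ need not have strictly decreasing values, so the ``no cycles'' argument must be invoked at the level of genuine paths, and one must separately rule out a pair $(\sigma,\tau)$ with $\sigma<\tau>\sigma$ giving a trivial backtrack — which is excluded precisely by the condition $\sigma_i\neq\sigma_{i+1}$ in the definition of a $V$-path. Once these bookkeeping points are handled, the argument is essentially the observation that a partial matching in which every vertex but one is matched ``upward'' organizes the vertices into a rooted forest, which connectivity upgrades to a single rooted tree.
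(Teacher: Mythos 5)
Your argument is correct and follows essentially the same route as the paper's: splitting is impossible because the gradient field is a partial matching (a vertex can belong to at most one pair), and perfection plus connectedness forces a single critical vertex, which serves as the root of the tree formed by all $1$-paths. The extra care you take with termination (finiteness plus acyclicity of gradient paths) and with the vertex/edge count for the tree structure only makes explicit what the paper's terser proof leaves implicit.
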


\begin{proof} If a $1$-path would split at some vertex, this would imply that the vertex is paired with two different edges which is clearly 
	not possible. Every $1$-path ends in a critical vertex. A connected cell complex has $b_0=1$ so a perfect discrete Morse function on it has 
	one critical vertex which is the root of a tree formed by all $1$-paths. 
\end{proof}

\begin{lemma}\label{n-paths}
	The $n$-paths in the gradient field of a discrete Morse function on a closed triangulated $n$-dimensional manifold can split but not merge. 
	For every $n$-dimensional cell $\sigma$ of a triangulated $n$-manifold with a perfect discrete Morse function, with the exception of the 
	unique critical $n$-cell, there exists a unique $n$-path beginning in the boundary of the critical $n$-cell and ending in $\sigma$. 
\end{lemma}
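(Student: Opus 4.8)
The plan is to transcribe, in top dimension, the combinatorics behind Lemma~\ref{1-paths}. Two purely local facts about a closed triangulated $n$-manifold $M$ carry the argument: every $(n-1)$-cell is a face of exactly two $n$-cells, and an $n$-cell is of top dimension, hence never the tail of an arrow, so by the discrete Morse condition each $n$-cell $\tau$ is either critical or is the head of a unique arrow and thus carries a well-defined $(n-1)$-face $d(\tau)<\tau$ with $(d(\tau),\tau)\in V$. For non-critical $\tau$ let $p(\tau)$ be the $n$-cell other than $\tau$ containing $d(\tau)$, so that $p$ is a partial self-map of the set of $n$-cells, defined exactly on the non-critical ones.

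I would first record that $f(p(\tau))>f(\tau)$ for every non-critical $\tau$: since $(d(\tau),\tau)\in V$ gives $f(d(\tau))\geq f(\tau)$, the coface $\tau$ is the one allowed to fail the strict inequality in the Morse condition at $d(\tau)$, whence $f(p(\tau))>f(d(\tau))\geq f(\tau)$; in particular $p$ has no periodic orbit and no $n$-path closes up. Then I would rewrite the notion of an $n$-path in terms of $p$: along $\sigma_0\to\tau_0>\sigma_1\to\tau_1>\cdots>\tau_r$ one is forced to have $\sigma_i=d(\tau_i)$, and, the two $n$-cofaces of $\sigma_{i+1}=d(\tau_{i+1})$ being $\tau_{i+1}$ and $p(\tau_{i+1})$ while $\sigma_{i+1}$ also lies in $\tau_i$ and $\tau_i\neq\tau_{i+1}$ (else $\sigma_i=d(\tau_i)=d(\tau_{i+1})=\sigma_{i+1}$), also $\tau_i=p(\tau_{i+1})$ for all $i$; conversely any sequence $\tau_0,\dots,\tau_r$ of non-critical $n$-cells with $\tau_{i-1}=p(\tau_i)$ arises this way, with $\sigma_i:=d(\tau_i)$ (and $\sigma_i\neq\sigma_{i+1}$, since otherwise $d(\tau_i)$ would be the tail of arrows to the two distinct cells $\tau_i$ and $\tau_{i+1}$). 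So $n$-paths are exactly the reversed orbits of $p$.

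The first assertion of the lemma is then immediate and does not use perfectness. Read backwards --- the direction of increasing $f$ --- an $n$-path is rigid: from $\tau_i$ one reads off $\sigma_i=d(\tau_i)$ and then $\tau_{i-1}=p(\tau_i)$, with no choice, so two $n$-paths sharing an $n$-cell agree from it backwards and cannot merge. Read forwards, at an $n$-cell $\tau_i$ one may continue along any $(n-1)$-face of $\tau_i$ other than $d(\tau_i)$ that is the tail of an arrow, and there may be several; hence an $n$-path can split.

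For the second assertion, let $\tau^{\ast}$ be the unique critical $n$-cell --- equivalently, the only $n$-cell on which $p$ is undefined --- its uniqueness being exactly where perfectness ($m_n(f)=b_n(M)=1$) is used. Since $M$ has finitely many cells and $f$ strictly increases along $p$, the orbit $\sigma,p(\sigma),p^2(\sigma),\dots$ of any other $n$-cell $\sigma$ terminates, necessarily at $\tau^{\ast}$; let $k\geq 1$ be minimal with $p^{k}(\sigma)=\tau^{\ast}$. Putting $\tau_i:=p^{\,k-1-i}(\sigma)$ for $0\leq i\leq k-1$ gives, by the translation above, an $n$-path with $\tau_{k-1}=\sigma$ and $p(\tau_0)=\tau^{\ast}$, so its first $(n-1)$-cell $d(\tau_0)$ lies in $\partial\tau^{\ast}$ --- the required path --- and it is unique since $k$ is determined by $\sigma$ and backward reconstruction along $p$ is forced. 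The only points needing care (but no genuine obstacle) are the two local facts that make $p$ a well-defined partial map and the identification of its undefined locus with the single critical $n$-cell; if one prefers not to assume $b_n(M)=1$ outright, it is automatic for $M$ closed and connected, since no discrete vector field on such an $M$ can pair every $n$-cell downwards (that would make $p$ total and produce an infinite strictly $f$-increasing sequence), so a critical $n$-cell exists and perfectness forces $b_n(M)=1$.
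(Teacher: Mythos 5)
Your proof is correct and follows essentially the same route as the paper's: it exploits that every $(n-1)$-cell of a closed triangulated $n$-manifold has exactly two $n$-cofaces, so that tracing an $n$-path backward (in the direction of increasing $f$) is forced, while tracing forward may branch. Your formalization via the partial map $p$ merely makes explicit two points the paper leaves implicit --- that the backward trace terminates because $f$ strictly increases along it, and that perfectness is what guarantees the terminal critical $n$-cell is the unique one --- so it is a tightening of the same argument rather than a different one.
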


\begin{proof} In a closed triangulated $n$-manifold, every $(n-1)$-cell is the common face of exactly two $n$-cells. If two $n$-paths would 
	merge at some $(n-1)$-cell $\tau$, then $\tau$ would be the common face of its $n$-dimensional pair as well as at least two other $n$-cells 
	which is not possible. If $\sigma$ is an $n$-cell in an $n$-path, then its pair is an $(n-1)$-cell which has precisely one other $n$-coface. 
	If this is not critical, it is the only possible previous $n$-cell in the path. Repeating this argument we eventually trace out a unique $n$-path 
	starting in a critical $n$-cell.
\end{proof}

Next we prove that a given perfect discrete Morse function on an oriented surface can be decomposed easily into two perfect discrete Morse functions if the separating 
curve is nice enough.  Then in Theorem~\ref{decompose} we prove that one can always find a nice enough separating curve.

Let $M_i$  be a closed, oriented, triangulated genus $g_i$ surface and $D_i \subset M_i$ be an embedded disk for $i=1,2$.  Also let 
$M= M_1 \#_{C} M_2$ along the boundary circle $C\approx  \partial (M-M_1) \approx  \partial (M-M_2)$.  Let $f$ be a perfect discrete Morse 
function on $M$ and $V$ be the gradient vector field induced by $f$ such that the restriction of $V$ to $M-M_{2} $ has one critical $0$-cell and  $2g_1$ critical $1$-cells and the restriction of $V$ to 
$M-M_{1} $ has one critical $2$-cell and $2g_2$ critical $1$-cells. 

\begin{theorem} \label{boundary}
	Under the above conditions, if we further assume that there are no arrows on the vertices and edges of $C$ pointing into $M-M_{1}$, 
	then we can extend $ V|_{M-M_{2}}$  to $M_{1} $ and $ V|_{M-M_{1}}$  to $M_{2}$ as perfect gradient vector fields induced by  perfect discrete Morse functions which agree with $f$ on $M-M_{1} $ and $ M-M_{2} $.
\end{theorem}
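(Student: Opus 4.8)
The idea is to fill in the missing vector field on the two disks $D_1\subset M_1$ and $D_2\subset M_2$ using the structure theorems for $1$-paths and $n$-paths (Lemmas~\ref{1-paths} and~\ref{n-paths}) together with the boundary-counting result of Lemma~\ref{boundary condition}. First I would set up notation: write $C$ for the common boundary circle, and observe that by hypothesis the restriction $V|_{M-M_2}$ has exactly one critical $0$-cell and $2g_1$ critical $1$-cells, while $V|_{M-M_1}$ has exactly one critical $2$-cell and $2g_2$ critical $1$-cells. By Lemma~\ref{boundary condition} applied to each side (taking $D=D_2$, resp.\ $D=D_1$), the number of boundary critical vertices on $C$ equals the number of boundary critical edges on $C$; call this common number $k$. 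The hypothesis that no arrows on $C$ point into $M-M_1$ means every vertex and edge of $C$ is either critical for $f|_{M-M_2}$ or paired with a cell lying in $M-M_2$ (on the $M_1$-side), so the cells of $C$ that are \emph{not} critical in $V|_{M-M_2}$ are matched toward $M_1$.

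Second, I would construct the extension of $V|_{M-M_2}$ to all of $M_1$. Here $M_1 = (M-M_2)\cup D_1$ where $D_1$ is a disk glued along $C$. On the $C$-side we have $k$ critical vertices and $k$ critical edges that must be absorbed, plus we must create exactly one new critical $2$-cell (so that $V|_{M_1}$ becomes perfect: one $0$-cell, $2g_1$ one-cells, one $2$-cell). The plan is to triangulate/collapse $D_1$ appropriately: pick one triangle of $D_1$ to be the critical $2$-cell, and use a collapse of the disk $D_1$ (rel nothing, or rather a discrete Morse function on a disk with a single critical $2$-cell) that additionally cancels the $k$ boundary-critical vertex/edge pairs of $C$. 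Concretely, since the $k$ boundary critical edges and $k$ boundary critical vertices of $C$ interleave along the circle (a circle with a discrete vector field has matched arcs separated by critical cells, and criticals alternate vertex/edge), each boundary-critical vertex $v_j$ can be matched \emph{inward} into $D_1$ with an edge of $D_1$ incident to it, and each boundary-critical edge $e_j$ matched inward with a triangle of $D_1$; after doing this along $C$ we are left with collapsing the remaining (smaller) disk, which has a discrete Morse function with one critical $0$-cell and one critical $2$-cell — but we already used up the critical $0$-cell on the $M-M_2$ side, so in fact the collapse should be toward $C$, leaving exactly one critical $2$-cell. If the combinatorics of $D_1$ do not immediately allow this, I would invoke bisections (as in Figure~\ref{fig:bisection} and the proof of Theorem~\ref{compose}) to subdivide $D_1$ near $C$ so that each $v_j$ and $e_j$ has a free inward face. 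The resulting $V|_{M_1}$ has no closed paths because all the new arrows point into $D_1$ and the interior of $D_1$ collapses (acyclically) onto $C$, so no loop can form.

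Third, I would construct the extension of $V|_{M-M_1}$ to $M_2 = (M-M_1)\cup D_2$ symmetrically, but now dualizing: $V|_{M-M_1}$ already carries the unique critical $2$-cell, so on $D_2$ we must introduce exactly one critical $0$-cell and absorb the $k$ boundary critical cells of $C$ by matching them \emph{outward} away from $M-M_1$, i.e.\ into $D_2$. The assumption ``no arrows on $C$ point into $M-M_1$'' is exactly what makes this consistent: on the $M_2$-side, the cells of $C$ are free to be rematched into $D_2$ without conflict, because on this side they were unpaired (critical) or their arrows pointed out of $M-M_1$. One designates a vertex of $D_2$ as the critical $0$-cell, matches the $k$ boundary-critical edges $e_j$ with incident triangles of $D_2$ and the $k$ boundary-critical vertices $v_j$ with incident edges of $D_2$ (again using bisections if needed), and collapses the rest of $D_2$ onto its chosen critical vertex. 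Acyclicity of $V|_{M_2}$ follows because $D_2$ collapses to a point and all new arrows are directed into $D_2$. Finally I would count critical cells on each side: $V|_{M_1}$ has $1$ critical $0$-cell, $2g_1$ critical $1$-cells, $1$ critical $2$-cell, matching the Betti numbers of a genus-$g_1$ surface, hence perfect; similarly $V|_{M_2}$ is perfect with $2g_2$ critical $1$-cells. Perfect discrete Morse functions inducing these fields exist by assigning values as in the proof of Theorem~\ref{compose} (a ``height'' function on the collapsing disks) and then invoking Theorem~\ref{equivalent}, and one checks these agree with $f$ on $M-M_1$ and $M-M_2$ up to the constant shift.

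\textbf{Main obstacle.} The delicate point is the simultaneous compatibility at $C$: one must match the $k$ boundary-critical vertices and edges of $C$ \emph{inward into $D_1$} (for the $M_1$-extension) and \emph{inward into $D_2$} (for the $M_2$-extension) without creating closed $V$-paths and without needing a boundary-critical cell to be matched in two directions at once. The no-inward-arrows hypothesis is what decouples the two sides, but verifying that a single disk triangulation (possibly after bisection) admits a discrete vector field with the prescribed boundary behaviour and exactly one interior critical cell — and that the alternation of critical vertices/edges along $C$ is precisely what a circle's gradient field forces — is where the real work lies; I expect this to require the careful local picture near $C$ and a modest induction on the number of triangles in $D_i$, peeling off free faces.
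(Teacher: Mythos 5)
Your overall skeleton (cap each summand with a disk, put exactly one new critical cell in each disk, pair the boundary-critical cells of $C$ into the disk, then check acyclicity and count critical cells) is the same as the paper's, which triangulates each $D_i$ as a cone over $C$. But there is a genuine error in how you distribute the boundary-critical cells between the two sides. The hypothesis that no arrow on a vertex or edge of $C$ points into $M-M_1$ means every cell of $C$ is paired either with another cell of $C$ or with a cell of $M-M_2$; consequently $V|_{M-M_2}$ has \emph{no} boundary-critical cells on $C$ at all, and \emph{all} of the boundary-critical cells belong to the other restriction $V|_{M-M_1}$ (they are exactly the cells of $C$ whose pair lies in $M-M_2$ and is therefore lost when you cut). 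Your construction of the $M_1$-extension instructs you to match $k$ boundary-critical vertices and edges of $C$ inward into $D_1$; on that side those cells are already paired in $V|_{M-M_2}$, so matching them again into $D_1$ would make them the tail of two arrows and destroy the vector field. The correct move on the $M_1$ side is the opposite: $D_1$ must be filled with a field that touches no cell of $C$ — pair the interior cone edges cyclically with the interior triangles and the apex with the last interior edge, leaving a single interior triangle as the new critical $2$-cell. Your identified ``main obstacle'' (that the same boundary-critical cell might need to be matched into both $D_1$ and $D_2$) is an artifact of this misreading: $M_1=(M-M_2)\cup_C D_1$ and $M_2=(M-M_1)\cup_C D_2$ are disjoint complexes after the splitting, so no cell is ever asked to carry two arrows, and the hypothesis already guarantees that only the $M_2$-side has cells of $C$ in need of a new partner.

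Two smaller points. First, Lemma~4.1 gives equality of boundary-critical vertices and edges \emph{separately on each side}; calling the result a single ``common number $k$'' conflates the count for $V|_{M-M_1}$ (some $k\ge 0$) with the count for $V|_{M-M_2}$ (which is $0$ here). Second, the claimed interleaving of boundary-critical vertices and edges along $C$ is neither justified nor needed: in the cone construction each boundary-critical cell of $C$ is simply paired with its unique coface in the cone (the edge, resp.\ triangle, through the apex), and each pair $(\sigma,\tau)$ internal to $C$ induces the pair of their cone cofaces, with the apex left as the single new critical $0$-cell; this works regardless of how the boundary-critical cells are distributed around the circle. With the roles of the two sides corrected and the explicit cone pairing substituted for your ``collapse with bisections,'' your argument becomes the paper's proof.
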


\begin{proof} 
	Each cell on the boundary curve $C$ is paired either with a cell on $C$ or a cell in $M-M_{2}$.  Assume that there are $k$ pairs of 
	cells on $C$.  Our aim is obtaining a perfect gradient vector field on $ M_{2} $ after attaching a disk $D_2$ to $M-M_{1}$ in the following way.  We triangulate the disk $D_2$ as a cone over $C$.  That is, we choose an interior point, called $v$, in $D_2$ and connect $v$ with the vertices on $C$ via straight line segments.  We pair  each boundary critical cell of $M-M_1$ (each is paired in $M$ with a cell in $M-M_2$) 
	with its coface in the cone.  For each pair $(\sigma,\tau)\in V$ of a $0$-cell $\sigma$ and a $1$-cell $\tau$ in $C$, the corresponding pair $(\sigma',\tau')$ inside the cone, where $\sigma'$ and $\tau'$ are cofaces of $\sigma$ and $\tau$ respectively, is formed.   
	The vertex $v$ of the cone will be the critical vertex (see Figure \ref{fig:disc3} for $k = 2$).  The vector field $V$ has no non-trivial closed path in $M$, and hence $ V|_{M-M_{1}}$ has no non-trivial closed path.  Additionally, we pair all boundary critical cells on $C$ with their cofaces in $D_{2}$, and hence, the extended vector field on $(M-M_1)\cup_C D_2\cong M_2$, called $V_{2}$, has no non-trivial closed paths.  Therefore, $V_{2}$ is the gradient vector field of a discrete Morse function \cite[Theorem 3.5]{forman2}, say $f_{2}$, on $M_{2}$  with the following numbers of critical cells which tells that $V_{2}$ is a perfect gradient vector field.

	  \begin{eqnarray*}
	  	m_0 &=& 1=b_0(M_{2}),\\
	  	m_1 &=& m_1(f_{2})=b_1(M_{2}),\\
	  	m_2 &=& 1=b_2(M_{2}).
	  \end{eqnarray*}

We can also obtain $f_{2}$ specifically by assigning values to the cells in $D_{2}$ so that they descend along the gradient paths, and the minimum is at the critical vertex $v$.	  
	
	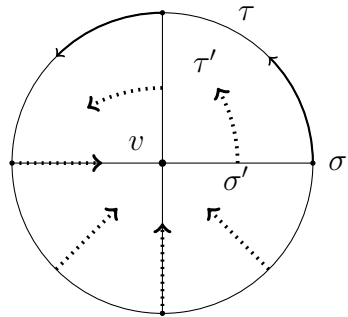
\begin{figure}[h]
		\begin{center}
			
			\begin{tikzpicture}[scale=2]
			\draw (1,0) arc (0:90:1cm);
			\draw (0,1) arc (90:180:1cm);
			\draw (-1,0) arc (180:270:1cm);
			\draw (0,-1) arc (270:360:1cm);
			\node[inner sep=0,anchor=west,text width=3.3cm] (note1) at (-0.23,0.13) {$v$};
			\node[inner sep=0,anchor=west,text width=3.3cm] (note1) at (1.1,0) {$\sigma$};
			\node[inner sep=0,anchor=west,text width=3.3cm] (note1) at (0.5,1) {$\tau$};
			\node[inner sep=0,anchor=west,text width=3.3cm] (note1) at (0.4,-0.1) {$\sigma'$};
			\node[inner sep=0,anchor=west,text width=3.3cm] (note1) at (0.2,0.68) {$\tau'$};
			\draw[thick,->] (1,0) arc (0:45:1cm);
			\draw[thick,->] (0,1) arc (90:135:1cm);
			\draw (0,-1)--(0,1);
			\draw (-1,0)--(1,0);
			\draw[thick,dotted,->] [line width=0.5mm](-1,0) -- (-0.4,0);
			\draw[thick,dotted,->] [line width=0.5mm](-0.7071,-0.7071) -- (-0.3,-0.3);
			\draw[thick,dotted,->] [line width=0.5mm](0,-1) -- (0,-0.4);
			\draw[thick,dotted,->] [line width=0.5mm](0.7071,-0.7071) -- (0.3,-0.3);
			\draw (1,0) node[circle,fill, inner sep=0.7pt] {};
			\draw (-1,0) node[circle,fill, inner sep=0.7pt] {};
			\draw (0,-1) node[circle,fill, inner sep=0.7pt] {};
			\draw (0,1) node[circle,fill, inner sep=0.7pt] {};
			\draw (0,0) node[circle,fill, inner sep=1pt] {};
			\draw[thick,dotted,->] [line width=0.5mm](0,0.5) arc (90:120:1cm);
			\draw[thick,dotted,->] [line width=0.5mm] (0.5,0) arc (0:30:1cm);
			\end{tikzpicture}
			
			\caption{The discrete vector field on the disk with the critical $0$-cell in the center.}
			\label{fig:disc3}
		\end{center}
	\end{figure}
	
	Now, we construct a perfect gradient vector field on $ M_{1} $ after attaching a disk $D_1$ to $M-M_{2}$.   In this case, there are no boundary critical cells on $C$ since each cell on $C$ is paired either to a cell on $C$ or to a cell in $M-M_2$. Again, we triangulate the 
	disk $D_1$ as a cone over $C$. 
	Let $t_{i}$ be the triangles and $e_{i}$, $e_{i+1}$ be the faces of $t_{i}$ for $i=1,2,3,\ldots,n$, which are ordered in counterclockwise direction in the interior of $D_{1}$.  We pair $e_{i}$'s with $t_{i}$'s where $i=1,2,3,\ldots,n-1$  and pair $v$ with $e_{n}$. 
	At the end, one unpaired triangle, $t_{n}$, will remain in $D_1$ since the number of edges and 
	triangles in $D_1$ equal (see Figure \ref{fig:disc4} for $k = 2$ and $n=4$).
	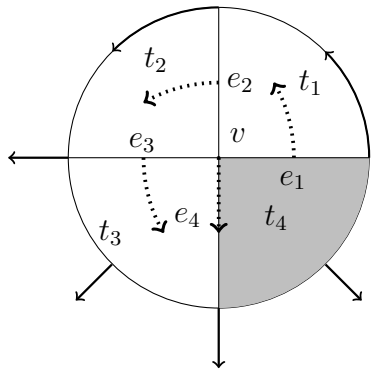
\begin{figure}[h]
		\centering
		\begin{tikzpicture}[scale=2]
		\draw (1,0) arc (0:90:1cm);
		\draw (0,1) arc (90:180:1cm);
		\draw (-1,0) arc (180:270:1cm);
		\draw (0,-1) arc (270:360:1cm);
		\fill[gray!50] (0,0) -- (0,-1) arc (270:360:1cm) -- cycle;
		\node[inner sep=0,anchor=west,text width=3.3cm] (note1) at (0.07,0.15) {$v$};
		\node[inner sep=0,anchor=west,text width=3.5cm] (note1) at (0.4,-0.15) {$e_{1}$};
		\node[inner sep=0,anchor=west,text width=3.3cm] (note1) at (0.53,0.5) {$t_{1}$};
		\node[inner sep=0,anchor=west,text width=3.3cm] (note1) at (0.05,0.5) {$e_{2}$};
		\node[inner sep=0,anchor=west,text width=3.3cm] (note1) at (-0.5,0.65) {$t_{2}$};
		\node[inner sep=0,anchor=west,text width=3.3cm] (note1) at (-0.6,0.1) {$e_{3}$};
		\node[inner sep=0,anchor=west,text width=3.3cm] (note1) at (-0.8,-0.5) {$t_{3}$};
		\node[inner sep=0,anchor=west,text width=3.3cm] (note1) at (-0.3,-0.4) {$e_{4}$};
		\node[inner sep=0,anchor=west,text width=3.3cm] (note1) at (0.3,-0.4) {$t_{4}$};
		\draw[thick,->] (1,0) arc (0:45:1cm);
		\draw[thick,->] (0,1) arc (90:135:1cm);
		\draw (0,-1)--(0,1);
		\draw (-1,0)--(1,0);
		\draw (0,0) node {.};
		\draw[thick,dotted,->] [line width=0.5mm](0,0.5) arc (90:120:1cm);
		\draw[thick,dotted,->] [line width=0.5mm](0.5,0) arc (0:30:1cm);
		\draw[thick,dotted,->] [line width=0.5mm] (-0.5,0) arc (180:210:1cm);
		\draw[thick,dotted,->] [line width=0.5mm](0,0)-- (0,-0.5);
		\draw[thick,->][line width=0.3mm](-0.71,-0.71)--(-0.95,-0.95);
		\draw[thick,->][line width=0.3mm](0.71,-0.71)--(0.95,-0.95);
		\draw[thick,->][line width=0.3mm](-1,0)--(-1.4,0);
		\draw[thick,->][line width=0.3mm](0,-1)--(0,-1.4);
		\end{tikzpicture}
		\caption{The discrete vector field on the disk with the critical $2$- cell.}
		\label{fig:disc4}
	\end{figure}
	We obtain a discrete vector field on $D_1$ with a critical $2$- cell (see Figure \ref{fig:disc4}).  We do not get any non-trivial closed paths in $M_{1}$ 
	after attaching $D_1$ since there is no non-trivial closed path in $M-M_{2}$ and there is no $i$- path, $i=1,2$, which begins in boundary of a cell in
	$D_1$ and comes back to $D_1$.  Hence, we obtain a gradient vector field $V_{1}$ induced by a discrete Morse function $f_{1}$  on  $(M-M_2)\cup_C D_1\cong M_1$ \cite[Theorem 3.5]{forman2}. In addition, $V_{1}$ is the perfect gradient vector field of $f_{1}$ with the number of critical cells below;
	
	\begin{eqnarray*}
		m_0 &=& 1=b_0(M_{1}),\\
		m_1 &=& m_1(f_{1})=b_1(M_{1}),\\
		m_2 &=& 1=b_2(M_{1}).
	\end{eqnarray*}

	Similarly, we can define $f_{1}$ specifically by assigning a value big enough to the critical triangle $t_{n}$ and descending values along the $V_{1}$-paths in $D_1$ keeping in mind that the values should all be bigger than the values on the circle $C$.
 	
\end{proof}

\begin{remark}
	 An alternative way to see that the discrete Morse function $ f|_{M-M_{2}}$ extends to $M_{1}$ as a perfect discrete Morse function is as follows:
	 We triangulate the disc $D_{1}$ as a cone with a single interior vertex $v$.  We choose one of the triangles in $D_{1}$, called $T$.  Clearly, $(D_{1}-int(T))\searrow C$.  Indeed,  $(M-M_2)\cup_C D_1\cong M_1$ and   $(M_{1}-int(T))\searrow (M-M_{2})$.  Hence, $f|_{M-M_{2}}$ can be extended to $M_{1}-T$ as a discrete Morse function without obtaining any new critical cell by following the inverse of the collapse steps of $(D_{1}-int(T))\searrow C$ \cite[Lemma 4.3]{forman1}.  Let $g$ be the extension of  $f|_{M-M_{2}}$ to $(M_{1}-T)$.  Then, we define a discrete Morse function on $M_{1}$ as in the following way,
	
	$$g'(\sigma)=\left\{\begin{array}{lll}
	g(\sigma)&; & \sigma\in M_1-T\\
	\textrm{max}~\{{g(\partial\sigma)}\}+c&; & \sigma=T\\
	\end{array}\right.$$
	
	Therefore, $g'$ is a perfect discrete Morse function with a unique critical $2$-cell $T$.
\end{remark}

Now, let $M=M_{1}\# M_{2}$ be the connected sum of two closed oriented triangulated surfaces of genus $g_{1}$ and $g_{2}$ respectively.  Let $f$ be 
a perfect discrete Morse function on $M$ such that the critical $2$-cell of $f$ is in $M-M_{1}$ and the critical $0$-cell of $f$ is in $M-M_{2}$.  
In addition, assume that the critical cells of $f$ are separated in the sense that the star of a critical cell contains no other critical cells. 
This can always be achieved after a suitable subdivision of $M$. Let $V$ be the gradient vector field induced by $f$.  In the following theorem, 
we look for a suitable boundary curve $C$, such as in Theorem~\ref{boundary}, so that one can decompose $f$ accordingly. 

\begin{theorem} \label{decompose}
	Let $M=M_{1}\# M_{2}$ be as above.  Then we can find a circle $C$ on $M$ such that $M= M_1 \#_{C} M_2$ and the cells on 
	$C$ are paired with either the cells on $C$ or the cells in $M-M_{2}$ or both.
	
\end{theorem}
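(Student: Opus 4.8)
The plan is to start from any triangulated circle $C_0\subset M$ realizing the given decomposition $M=M_1\#_{C_0}M_2$, with the critical $0$-cell $v_0$ in the interior of the $M-M_2$ side and the critical $2$-cell $t_0$ in the interior of the $M-M_1$ side; such a $C_0$ exists since $M=M_1\#M_2$, and after an isotopy across cells we may assume it avoids the closed stars of the critical cells, which are pairwise disjoint by hypothesis. Call an arrow of $V$ \emph{bad} for a separating circle $C$ (writing $B$ for its $M-M_1$ side) if its tail lies on $C$ and its head lies in the interior of $B$; unwinding the definitions, bad arrows are exactly of two types: a vertex $v\in C$ paired upward with an edge in $\operatorname{int}(B)$, or an edge $e\subset C$ paired upward with a triangle in $\operatorname{int}(B)$. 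The conclusion of the theorem is precisely that some separating circle $C$ of the given type carries no bad arrows, and I would produce one from $C_0$ by finitely many local ``curve pushes'', each strictly lowering the potential $\Phi(C)=\#\{2\text{-cells of }M\text{ in }\operatorname{int}(B)\}$, a nonnegative integer; when $\Phi$ cannot be lowered further the circle has no bad arrows.

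So suppose $C$ still has a bad arrow. If it is an edge--triangle pair $(e,t)\in V$ with $e\subset C$ and $t$ on the $B$-side, I push $C$ across $t$: delete $e$ and insert the other two edges of $t$, transferring $t$ from $B$ to the $M-M_2$ side. When $t\cap C=e$ this is an elementary move keeping $C$ an embedded circle, keeping the genus splitting $(g_1,g_2)$ (a disk meeting $C$ in one arc has been moved across), and lowering $\Phi$ by one; if $t$ meets $C$ in more than $e$, I first push across the triangles along the unique $2$-path from $\partial t_0$ to $t$ given by Lemma~\ref{n-paths}, whose cells avoid $t_0$ by separatedness, so that $t$ can then be pushed cleanly. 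If instead the bad arrow is a vertex--edge pair $(v,e)\in V$ with $v\in C$ and $e\subset\operatorname{int}(B)$, I follow the $1$-path out of $v$: by Lemma~\ref{1-paths} it is an embedded arc ending at $v_0\in\operatorname{int}(M-M_2)$, hence it returns to $C$; letting $v'$ be its first return point, the sub-arc $\pi$ from $v$ to $v'$ has interior in $\operatorname{int}(B)$, and with a suitable arc of $C$ it bounds a subsurface $R\subset\overline B$. Choosing that arc so that $R$ is a disk and (by separatedness) excludes $t_0$, I replace that arc of $C$ by $\pi$, which again keeps $C$ a separating circle of the same genus type and lowers $\Phi$ by at least one, since $R$ contains a $2$-cell incident to $\pi$.

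Because $\Phi$ is bounded below and strictly decreases at each step, the process terminates at a circle $C$ with $M=M_1\#_CM_2$ carrying no bad arrows, which is the assertion of the theorem; an Euler-characteristic count as in Lemma~\ref{boundary condition} moreover shows the boundary critical vertices and edges of $V|_{M-M_1}$ on $C$ occur in equal numbers and that $V|_{M-M_1}$ has one critical $2$-cell and $2g_2$ critical $1$-cells (symmetrically for $V|_{M-M_2}$), so $C$ is of exactly the kind to which Theorem~\ref{boundary} applies. The step I expect to be the main obstacle is the surface bookkeeping inside the two pushes: showing the transferred region can always be taken to be a disk so that $C$ stays a single embedded circle and the genus split $(g_1,g_2)$ is preserved rather than pinched off or shuffled between the sides, and that a bad arrow admitting such a clean push always exists — this is where separatedness of the critical cells and the tree/cotree structure of the $1$- and $2$-paths (Lemmas~\ref{1-paths} and \ref{n-paths}) carry the weight of the argument.
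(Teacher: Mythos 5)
Your strategy is genuinely different from the paper's: the paper constructs the separating circle directly, as the boundary of the region swept out by the critical $2$-cell, the $2$-paths descending from its boundary to the $2g_2$ chosen critical $1$-cells, and those critical $1$-cells themselves --- so the condition ``no arrows point into $M-M_1$'' holds by construction, and the only remaining work is repairing the boundary when it degenerates into wedges of circles joined by interior arcs, which is done by subdividing the triangulation (Cases 1--3). You instead start from an arbitrary separating circle and try to remove bad arrows one at a time by local isotopies, monitored by the potential $\Phi$. The termination logic is fine, but correctness hinges on every bad arrow admitting a push that keeps $C$ an embedded circle realizing the same genus splitting, and that is exactly where the argument breaks.

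Concretely, in the vertex--edge case you need the sub-arc $\pi$ of the $1$-path from $v$ to its first return $v'$ to cobound a disk $R$ with one of the two arcs of $C$ inside $\overline{B}$. There is no reason for this: $\overline{B}$ is a genus-$g_2$ surface with one boundary circle, and an embedded arc with endpoints on the boundary can be essential --- it can run over a handle of $M_2$, in which case it is either non-separating in $\overline{B}$ (so the modified curve fails to separate $M$) or it splits $\overline{B}$ into two pieces of positive genus (so the push changes the splitting from $(g_1,g_2)$ to $(g_1+k,g_2-k)$). Lemma~\ref{1-paths} makes the union of \emph{all} $1$-paths a spanning tree rooted at the minimum, hence contractible in $M$, but says nothing about a single branch being inessential in $\overline{B}$ relative to $C$; gradient $1$-paths on a torus routinely wind around a handle before descending. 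The edge--triangle case has a parallel problem: when $t$ meets $C$ in more than the edge $e$, your proposed fix --- first pushing across the triangles of the $2$-path from $\partial t_0$ to $t$ --- is not a defined operation, since those triangles need not be adjacent to $C$ at all, and pushing across them would not remove the second edge of $t$ from $C$. The paper resolves exactly these pinchings by subdividing near the offending cells (see the Remark following Theorem~\ref{decompose}); your argument never invokes subdivision, and without it the moves can genuinely be blocked. A secondary gap: even where the pushes are defined, they can transfer a critical $1$-cell across $C$ or onto $C$, so the hypothesis of Theorem~\ref{boundary} on the distribution of critical cells between the two sides is not automatically preserved at termination.
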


\begin{proof}
	Our goal is to split $M$ into two parts, $M-M_2$ and $M-M_1$ along a curve $C$ which satisfies the above condition concerning
	how the cells on $C$ are paired. 
	There are $2(g_1+g_2)$ critical $1$-cells of $f$, bottom 
	$2g_1$ critical $1$-cells belong to $M-M_2$ and other critical $1$-cells belong to $M-M_1$.  If $f$ is not injective and there is an ambiguity in deciding 
	which cells are lower and which are higher,  we may perturb $f$ a little without creating any additional critical cells as in 
	the proof of \cite[Theorem 3.3]{forman1}.   
	
	To find the separating circle $C$ as mentioned above, we follow the flow induced by $f$ (the direction along which $f$ is non-increasing) from the 
	critical $2$-cell to the critical $1$-cells which belong to $M-M_1$.  We consider the $2$-paths that begin from a face of the critical 
	$2$-cell and end at these critical $1$-cells.  Each critical 1-cell has precisely two 2-cofaces, and according to Lemma~\ref{n-paths} each one of 
	these is the end of precisely one 2-path that begins in a face of the critical 2-cell.
	If the critical $2$-cell is removed, we may collapse along these paths, starting at a free edge in the boundary of the critical $2$-cell. These 
	collapses produce tunnels with cells in the boundary that are paired either to cells in the boundary or to cells in the remaining part of the manifold. 
	The critical $2$-cell together with these tunnels and the critical $1$-cells at their ends will represent the core of $M-M_1$. The boundary of this region 
	is a curve $C'$. If this curve is a circle, we have achieved our goal since this circle separates the region that has been removed and represents $M-M_{1}$ and the 
	arrows along the curve point either along the curve or into the remaining part $M-M_{2}$. 
	
    Note that $C'$ might not be a circle due to the fact that the triangulation on $M$ might be too coarse to allow the construction of such a circle.  
	The problem is that two different tunnels, that is, two paths from the critical $2$-cell to critical $1$-cells in $M-M_{1}$ come too close together 
	and meet in a vertex or are separated only by an edge, or more generally, by a $1$-path.  As a result of this, the curve $C'$ is a union of  wedges of 
	several circles possibly with additional paths in the interior of $M-M_{1}$ that connecting the wedges.  In order to obtain a separating curve that is a circle, we have to do some 
	subdivisions.  For instance, in Example \ref{genus-2}, the resulting boundary of  several $2$-paths meet at the vertex $3$, so the curve $C'$ is a wedge of circles meeting at this 
	vertex, and the path $a$ represents an additional edge(arc) joining these circles, since two $2$-paths meet in this edge.
	
	\textit{Case} 1: Assume first that the curve $C'$ is a wedge of $n$ circles at a single vertex $v$.  Let $c_{i}$, for $i=1,2,\ldots,n$, be the circles in the wedge 
	ordered in the counterclockwise direction and $v_{i1}, v_{i2}$ be two of the vertices on each circle $c_{i}$ such that they are in the link of $v$ in $C'$, that is $v_{i1}, v_{i2} \in Lk_{v}(C')$ and they are ordered as  $v, v_{i1}, v_{i2}$ .  
	As in this case, throughout the proof, whenever we order circles or vertices, 
	we always assume that they are ordered in the counterclockwise direction.  

	We connect the vertices in each pair  
	$\{v_{12}, v_{21}\}, \{v_{22}, v_{31}\},\ldots,\{v_{(n-1)2}, v_{n1}\}$ by a union of edges obtained 
	by subdividing as few as possible triangles in the tunnel.  Then, we follow these edges as a part of the boundary curve in order to obtain a circle instead of
	the old boundary component  (see Figure \ref{fig:wedge1} for $n=3$).  We always pair all the cells on the new boundary components with their cofaces 
	outside of the tunnel by using the method given in \cite[Theorem 12.1]{forman1} (e.g. see Figure \ref{fig:unwedge1}).  Note that the vertex which is the starting point of the dotted arrow is paired 
	with a $1$-cell outside of the tunnel.  
	
	\begin{figure}[h]
		\centering
		\resizebox{0.45\textwidth}{!}{%
			\begin{tikzpicture}[scale=2]
			\draw (0,2)--(0,4);
			\draw (2,4)--(4,4);
			\draw (4,0)--(0,0);
			\draw (2,0) -- (2,2);
			\draw (2,2.7) -- (2,2);
			\draw (2,2) -- (2.7,2);
			\draw (4,0) -- (4,2);
			\draw[fill=black!20] (0,0)--(2,0)--(2,2)--(0,0);
			\draw[fill=black!20] (2,2)--(2,4)--(4,4)--(2,2);
			\draw[fill=black!20] (0,2)--(0,4)--(2,2)--(0,2);
			\draw[fill=black!20] (2,2)--(2,0)--(4,0)--(2,2);
			\draw[fill=black!20] (2,2)--(4,0)--(4,2)--(2,2);
			\draw (0,0) -- (4,4);
			\node[inner sep=0,anchor=west,text width=3.3cm] (note1) at (0.3,1.5) {$c_{1}$};
			\node[inner sep=0,anchor=west,text width=3.3cm] (note1) at (1.2,3.3) {$c_{3}$};
			\node[inner sep=0,anchor=west,text width=3.3cm] (note1) at (3.35,2.7) {$c_{2}$};
			\node[inner sep=0,anchor=west,text width=3.3cm] (note1) at (2,1.8) {$v$};
			\draw[dotted] [line width=0.5mm](0,4) -- (2,4);
			\draw[dotted] [line width=0.5mm](2,2) -- (0,2);
			\draw[dotted] [line width=0.5mm](2,2) -- (4,4);
			\draw[dotted] [line width=0.5mm](2,2) -- (4,2);
			\draw[dotted] [line width=0.5mm](0,0) -- (2,2);
			\draw[dotted] [line width=0.5mm](2,4) -- (2,2);
			\draw[dotted] [line width=0.5mm](0,0) -- (0,2);
			\draw[dotted] [line width=0.5mm](4,4) -- (4,2);
			\draw (0,0) node[circle,fill,inner sep=2pt] {};
			\draw (2,0) node[circle,fill,inner sep=2pt] {};
			\draw (4,0) node[circle,fill,inner sep=2pt] {};
			\draw (0,2) node[circle,fill,inner sep=2pt] {};
			\draw (2,2) node[circle,fill,inner sep=2pt] {};
			\draw (4,2) node[circle,fill,inner sep=2pt] {};
			\draw (0,4) node[circle,fill,inner sep=2pt] {};
			\draw (2,4) node[circle,fill,inner sep=2pt] {};
			\draw (4,4) node[circle,fill,inner sep=2pt] {};
			\draw (2,2.7) node[circle,fill,inner sep=2pt] {};
			\draw (2.6,2.6) node[circle,fill,inner sep=2pt] {};
			\draw (2.7,2) node[circle,fill,inner sep=2pt] {};
			\draw (1.4,1.4) node[circle,fill,inner sep=2pt] {};
			\node[inner sep=0,anchor=west,text width=3.3cm] (note1) at (2.7,2.5) {$v_{22}$};
			\node[inner sep=0,anchor=west,text width=3.3cm] (note1) at (1.5,2.7) {$v_{31}$};
			\node[inner sep=0,anchor=west,text width=3.3cm] (note1) at (1.3,1.2) {$v_{12}$};
			\node[inner sep=0,anchor=west,text width=3.3cm] (note1) at (-0.5,4.3) {$v_{32}$};
			\node[inner sep=0,anchor=west,text width=3.3cm] (note1) at (-0.5,2) {$v_{11}$};
			\draw[thick,->] (2,2.7) -- (2,3);
			\draw[thick,->] (2.6,2.6) -- (2.3,2.3);
			\draw[thick,->] (2.7,2) -- (2.3,2);
			\draw[thick,->] (1.4,1.4) -- (1.7,1.7);
			\draw[thick,->] (1,4) -- (1,3.6);
			\draw[thick,->] (2,1) -- (2.4,1);
			\draw[thick,->] (3,1) -- (3.3,1.3);
			\draw[thick,->] (0,0) -- (0,0.4);
			\draw[thick,->] (0,2) -- (0.4,2);
			\draw[thick,->] (0,4) -- (0.3,3.7);
			\draw[thick,->] (1,0) -- (1,0.4);
			\draw[thick,->] (1,1) -- (0.7,1.3);
			\draw[thick,->] (0,3) -- (0.4,3);
			\draw[thick,->] (2,0) -- (2,-0.4);
			\draw[thick,->] (2,2) -- (2,2.4);
			\draw[thick,dotted,->] (2,4) -- (1.7,3.7);
			\draw[thick,->] (3,0) -- (3,-0.4);
			\draw[thick,->] (4,0) -- (4,0.4);
			\draw[thick,->] (4,2) -- (3.6,2);
			\draw[thick,->] (3,3) -- (3.3,2.7);
			\draw[thick,->] (3,4) -- (3,3.6);
			\draw[thick,->] (4,4) -- (4,3.6);
			\end{tikzpicture}
		} %
		\qquad
		\resizebox{0.45\textwidth}{!}{%
			\begin{tikzpicture}[scale=2]
			\draw (0,2)--(0,4);
			\draw (2,4)--(4,4);
			\draw (4,0)--(0,0);
			\draw (2,0) -- (2,2);
			\draw (2,2.7) -- (2,2);
			\draw (2,2) -- (2.7,2);
			\draw (4,0) -- (4,2);
			\draw[fill=black!20] (0,0)--(2,0)--(2,2)--(0,0);
			\draw[fill=black!20] (2,2)--(2,4)--(4,4)--(2,2);
			\draw[fill=black!20] (0,2)--(0,4)--(2,2)--(0,2);
			\draw[fill=black!20] (2,2)--(2,0)--(4,0)--(2,2);
			\draw[fill=black!20] (2,2)--(4,0)--(4,2)--(2,2);
			\draw (0,0) -- (4,4);
			\node[inner sep=0,anchor=west,text width=3.3cm] (note1) at (0.3,1.5) {$c_{1}$};
			\node[inner sep=0,anchor=west,text width=3.3cm] (note1) at (1.2,3.3) {$c_{3}$};
			\node[inner sep=0,anchor=west,text width=3.3cm] (note1) at (3.35,2.7) {$c_{2}$};
			\node[inner sep=0,anchor=west,text width=3.3cm] (note1) at (2,1.8) {$v$};
			\draw[dotted] [line width=0.5mm](2,2) -- (0,4);
			\draw[dotted] [line width=0.5mm](0,4) -- (2,4);
			\draw[dotted] [line width=0.5mm](2,2) -- (0,2);
			\draw[dotted] [line width=0.5mm](2,2) -- (4,4);
			\draw[dotted] [line width=0.5mm](2,2) -- (4,2);
			\draw[dotted] [line width=0.5mm](0,0) -- (2,2);
			\draw[dotted] [line width=0.5mm](2,4) -- (2,2);
			\draw[dotted] [line width=0.5mm](0,0) -- (0,2);
			\draw[dotted] [line width=0.5mm](4,4) -- (4,2);
			\draw (0,0) node[circle,fill,inner sep=2pt] {};
			\draw (2,0) node[circle,fill,inner sep=2pt] {};
			\draw (4,0) node[circle,fill,inner sep=2pt] {};
			\draw (0,2) node[circle,fill,inner sep=2pt] {};
			\draw (2,2) node[circle,fill,inner sep=2pt] {};
			\draw (4,2) node[circle,fill,inner sep=2pt] {};
			\draw (0,4) node[circle,fill,inner sep=2pt] {};
			\draw (2,4) node[circle,fill,inner sep=2pt] {};
			\draw (4,4) node[circle,fill,inner sep=2pt] {};
			\draw (2,2.7) node[circle,fill,inner sep=2pt] {};
			\draw (2.6,2.6) node[circle,fill,inner sep=2pt] {};
			\draw (2.7,2) node[circle,fill,inner sep=2pt] {};
			\draw (1.4,1.4) node[circle,fill,inner sep=2pt] {};
			\draw (2.5,1.5) node[circle,fill,inner sep=2pt] {};
			\draw (2,1.3) node[circle,fill,inner sep=2pt] {};
			\node[inner sep=0,anchor=west,text width=3.3cm] (note1) at (2.7,2.5) {$v_{22}$};
			\node[inner sep=0,anchor=west,text width=3.3cm] (note1) at (2.7,1.8) {$v_{21}$};
			\node[inner sep=0,anchor=west,text width=3.3cm] (note1) at (1.5,2.7) {$v_{31}$};
			\node[inner sep=0,anchor=west,text width=3.3cm] (note1) at (1.3,1.2) {$v_{12}$};
			\node[inner sep=0,anchor=west,text width=3.3cm] (note1) at (-0.5,4.3) {$v_{32}$};
			\node[inner sep=0,anchor=west,text width=3.3cm] (note1) at (-0.5,2) {$v_{11}$};
			\draw (2,2.7) -- (2.6,2.6);
			\draw (2.5,1.5) -- (2.7,2);
			\draw (2,1.3) -- (2.5,1.5);
			\draw (2,1.3) -- (1.4,1.4);
			\draw[thick,->] (2,2.7) -- (2,3);
			\draw[thick,->] (2.6,2.6) -- (2.3,2.3);
			\draw[thick,->] (2.7,2) -- (2.3,2);
			\draw[thick,->] (1.4,1.4) -- (1.7,1.7);
			\draw[thick,->] (1,4) -- (1,3.6);
			\draw[thick,->] (2,1) -- (2.4,1);
			\draw[thick,->] (3,1) -- (3.3,1.3);
			\draw[thick,->] (0,0) -- (0,0.4);
			\draw[thick,->] (0,2) -- (0.4,2);
			\draw[thick,->] (0,4) -- (0.3,3.7);
			\draw[thick,->] (1,0) -- (1,0.4);
			\draw[thick,->] (1,1) -- (0.7,1.3);
			\draw[thick,->] (0,3) -- (0.4,3);
			\draw[thick,->] (2,0) -- (2,-0.4);
			\draw[thick,->] (2,2) -- (2,2.4);
			\draw[thick,dotted,->] (2,4) -- (1.7,3.7);
			\draw[thick,->] (3,0) -- (3,-0.4);
			\draw[thick,->] (4,0) -- (4,0.4);
			\draw[thick,->] (4,2) -- (3.6,2);
			\draw[thick,->] (3,3) -- (3.3,2.7);
			\draw[thick,->] (3,4) -- (3,3.6);
			\draw[thick,->] (4,4) -- (4,3.6);
			\end{tikzpicture}
		} %
		
		\caption{}
		\label{fig:wedge1}
	\end{figure}
	
	In Figures \ref{fig:wedge1} and \ref{fig:unwedge1}, the gray regions represent a part of the interior of the tunnel the while white ones are a part of the outside of the tunnel, and the dotted edges represent the boundary curves.
	
	In this case, when the above process is complete, we obtain only one circle passing through the wedge point $v$, which is our boundary curve $C'$
	(see Figure \ref{fig:unwedge1}).
	\begin{figure}[h]
		\centering
		\resizebox{0.42\textwidth}{!}{%
			\begin{tikzpicture}[scale=2]
			\draw (0,2)--(0,4);
			\draw (2,4)--(4,4);
			\draw (4,0)--(0,0);
			\draw (2,0) -- (2,2);
			\draw (2,2.7) -- (2,2);
			\draw (2,2) -- (2.7,2);
			\draw (4,0) -- (4,2);
			\draw[fill=black!20] (1.4,1.4)--(2,1.3)--(2,0)--(0,0)--(1.4,1.4);
			\draw[fill=black!20] (2,2.7)--(2,4)--(4,4)--(2.6,2.6)--(2,2.7);
			\draw[fill=black!20] (0,2)--(0,4)--(2,2)--(0,2);
			\draw[fill=black!20] (2,0)--(2,1.3)--(2.5,1.5)--(4,0)--(2,0);
			\draw[fill=black!20] (2.5,1.5)--(4,0)--(4,2)--(2.7,2)--(2.5,1.5);
			\draw (0,0) -- (4,4);
			\node[inner sep=0,anchor=west,text width=3.3cm] (note1) at (0.3,1.5) {$c_{1}$};
			\node[inner sep=0,anchor=west,text width=3.3cm] (note1) at (1.2,3.3) {$c_{3}$};
			\node[inner sep=0,anchor=west,text width=3.3cm] (note1) at (3.35,2.7) {$c_{2}$};
			\node[inner sep=0,anchor=west,text width=3.3cm] (note1) at (2,1.8) {$v$};
			\draw[dotted] [line width=0.5mm](2,2) -- (0,4);
			\draw[dotted] [line width=0.5mm](0,4) -- (2,4);
			\draw[dotted] [line width=0.5mm](2,2) -- (0,2);
			\draw[dotted] [line width=0.5mm](2.6,2.6) -- (4,4);
			\draw[dotted] [line width=0.5mm](4,2) -- (4,0);
			\draw[dotted] [line width=0.5mm](4,0) -- (4,2);
			\draw[dotted] [line width=0.5mm](2.7,2) -- (4,2);
			\draw[dotted] [line width=0.5mm](0,0) -- (1.4,1.4);
			\draw[dotted] [line width=0.5mm](2,4) -- (2,2.7);
			\draw[dotted] [line width=0.5mm](0,0) -- (0,2);
			\draw[dotted] [line width=0.5mm](4,4) -- (4,2);
			\draw[dotted] [line width=0.5mm](1.4,1.4) -- (2,1.3);
			\draw[dotted] [line width=0.5mm](2,1.3) -- (2.5,1.5);
			\draw[dotted] [line width=0.5mm](2.5,1.5) -- (2.7,2);
			\draw[dotted] [line width=0.5mm](2.6,2.6) -- (2,2.7);
			\draw (2,2.7) -- (2.6,2.6);
			\draw (2.5,1.5) -- (2.7,2);
			\draw (2,1.3) -- (2.5,1.5);
			\draw (2,1.3) -- (1.4,1.4);
			\draw[dotted] [line width=0.5mm](1.4,1.4) -- (2,1.3);
			\draw[dotted] [line width=0.5mm](2,1.3) -- (2.5,1.5);
			\draw[dotted] [line width=0.5mm](2.5,1.5) -- (2.7,2);
			\draw[dotted] [line width=0.5mm](2.6,2.6) -- (2,2.7);
			\draw[thick,dotted,->] (2,1.3) -- (2,1.7);
			\draw[thick,dotted,->] (2.5,1.5) -- (2.2,1.8);
			\draw[thick,dotted,->] (1.7,1.35) -- (1.8,1.7);
			\draw[thick,dotted,->] (2.25,1.4) -- (2.15,1.7);
			\draw[thick,dotted,->] (2.6,1.7) -- (2.35,1.9);
			\draw[thick,dotted,->] (2.3,2.65) -- (2.13,2.33);
			\draw (0,0) node[circle,fill,inner sep=2pt] {};
			\draw (2,0) node[circle,fill,inner sep=2pt] {};
			\draw (4,0) node[circle,fill,inner sep=2pt] {};
			\draw (0,2) node[circle,fill,inner sep=2pt] {};
			\draw (2,2) node[circle,fill,inner sep=2pt] {};
			\draw (4,2) node[circle,fill,inner sep=2pt] {};
			\draw (0,4) node[circle,fill,inner sep=2pt] {};
			\draw (2,4) node[circle,fill,inner sep=2pt] {};
			\draw (4,4) node[circle,fill,inner sep=2pt] {};
			\draw (2,2.7) node[circle,fill,inner sep=2pt] {};
			\draw (2.6,2.6) node[circle,fill,inner sep=2pt] {};
			\draw (2.7,2) node[circle,fill,inner sep=2pt] {};
			\draw (1.4,1.4) node[circle,fill,inner sep=2pt] {};
			\draw (2.5,1.5) node[circle,fill,inner sep=2pt] {};
			\draw (2,1.3) node[circle,fill,inner sep=2pt] {};
			\node[inner sep=0,anchor=west,text width=3.3cm] (note1) at (2.7,2.5) {$v_{22}$};
			\node[inner sep=0,anchor=west,text width=3.3cm] (note1) at (2.7,1.8) {$v_{21}$};
			\node[inner sep=0,anchor=west,text width=3.3cm] (note1) at (1.5,2.7) {$v_{31}$};
			\node[inner sep=0,anchor=west,text width=3.3cm] (note1) at (1.3,1.2) {$v_{12}$};
			\node[inner sep=0,anchor=west,text width=3.3cm] (note1) at (-0.5,4.3) {$v_{32}$};
			\node[inner sep=0,anchor=west,text width=3.3cm] (note1) at (-0.5,2) {$v_{11}$};
			\draw[thick,->] (2,2.7) -- (2,3);
			\draw[thick,->] (2.6,2.6) -- (2.3,2.3);
			\draw[thick,->] (2.7,2) -- (2.3,2);
			\draw[thick,->] (1.4,1.4) -- (1.7,1.7);
			\draw[thick,->] (1,4) -- (1,3.6);
			\draw[thick,->] (2,1) -- (2.4,1);
			\draw[thick,->] (3,1) -- (3.3,1.3);
			\draw[thick,->] (0,0) -- (0,0.4);
			\draw[thick,->] (0,2) -- (0.4,2);
			\draw[thick,->] (0,4) -- (0.3,3.7);
			\draw[thick,->] (1,0) -- (1,0.4);
			\draw[thick,->] (1,1) -- (0.7,1.3);
			\draw[thick,->] (0,3) -- (0.4,3);
			\draw[thick,->] (2,0) -- (2,-0.4);
			\draw[thick,->] (2,2) -- (2,2.4);
			\draw[thick,dotted,->] (2,4) -- (1.7,3.7);
			\draw[thick,->] (3,0) -- (3,-0.4);
			\draw[thick,->] (4,0) -- (4,0.4);
			\draw[thick,->] (4,2) -- (3.6,2);
			\draw[thick,->] (3,3) -- (3.3,2.7);
			\draw[thick,->] (3,4) -- (3,3.6);
			\draw[thick,->] (4,4) -- (4,3.6);
			\end{tikzpicture}
		} %
		
		\caption{}
		\label{fig:unwedge1}
	\end{figure}
	
	\textit{Case} 2: The curve $C'$ might be a union of wedges of several circles with additional arcs connecting them.  The resulting boundary of $C'$ is disconnected, i.e., it is the disjoint union of curves such that these curves are connected via interior additional arcs in the tunnel.  Firstly, we will obtain a connected boundary curve via connecting disjoint bounday components by pushing the interior arcs of the tunnel outside after doing some subdivisions as in the following explanation.

	Let $A=\displaystyle  \vee_{i}   c_{i}$ be the wedge of the circles $c_{i}$ at the point $v$, for $i=1,2,\ldots,n$ and $B=\displaystyle  \vee_{j}   c_{j}'$ be the 
	wedge of the circles $c_{j}'$ at the point $v'$, for $j=1,2,\ldots,m$.  Let $a$ be the additional arc between $A$ and $B$ with the boundary vertices $v$ and 
	$v'$  such that $A\cap \{a\}=\{v\}$ and $B\cap \{a\}=\{v'\}$.  Assume that the circles in $A$ and $B$ are ordered as 
	$a, c_{1}, c_{2}, \ldots, c_{n}$ and $a, c_{1}', c_{2}', \ldots, c_{m}'$ respectively(see Figure \ref{fig:connecting edge}).
	
	Let $v_{i1}, v_{i2}$ and $v_{j1}', v_{j2}'$ be two of the vertices on each circle $c_{i}$ and $c_{j}'$ respectively such that they are in the link $ Lk_{v}(A)$ and $ Lk_{v'}(B)$ respectively, and they are ordered as $v, v_{i1}, v_{i2}$ and 
	$v', v_{j1}', v_{j2}'$. 
	To push the additional arc outside of the tunnel, we take two vertices 
	$v_{11},v_{n2}\in Lk_{v}(A) $ and two vertices $v_{11}',v_{m2}'\in Lk_{v'}(B)$, and then, we connect the vertices $v_{11}$ with $v_{m2}'$ and $v_{n2}$ with 
	$v_{11}'$ by arcs which are a union of edges obtained by doing a sequence of bisections in the tunnels.  We take these arcs as a part of the boundary curve 
	instead of the arc $a$ (see Figure \ref{fig:connecting edge} where $n=2, m=2, t_{i}=3, p_{j}=3 $).  
	Note that at the end of this process the arc $a$ will lie outside of the tunnel and we repeat the process above for all the additional arcs to obtain a connected boundary curve.
	
	\begin{figure}[h]
		\resizebox{0.47\textwidth}{!}{%
			\begin{tikzpicture}[scale=2]
			\draw (0,2)--(0,4);
			\draw (0,6)--(4,6);
			\draw (4,2)--(4,4);
			\draw (0,0)--(4,0);
			\draw (2,4) -- (0,6);
			\draw (2,4) -- (4,6);
			\draw (2,2) -- (0,0);
			\draw (2,2) -- (4,0);
			\draw (0,2) -- (4,2);
			\draw (0,4) -- (4,4);
			\draw (0,3) -- (2,4);
			\draw (0,3) -- (2,2);
			\draw (4,3) -- (2,2);
			\draw (4,3) -- (2,4);
			\draw[fill=black!20] (0,6)--(2,4)--(4,6)--(0,6);
			\draw[fill=black!20] (0,0)--(2,2)--(4,0)--(0,0);
			\draw[fill=black!20] (0,2)--(2,2)--(0,3)--(0,2);
			\draw[fill=black!20] (0,3)--(2,2)--(2,4)--(0,3);
			\draw[fill=black!20] (0,3)--(2,4)--(0,4)--(0,3);
			\draw[fill=black!20] (2,2)--(4,2)--(4,3)--(2,2);
			\draw[fill=black!20] (2,2)--(2,4)--(4,3)--(2,2);
			\draw[fill=black!20] (4,3)--(2,4)--(4,4)--(4,3);
			\node[inner sep=0,anchor=west,text width=3.3cm] (note1) at (2.1,3) {$a$};
			\node[inner sep=0,anchor=west,text width=3.3cm] (note1) at (1.9,1.55) {$v'$};
			\node[inner sep=0,anchor=west,text width=3.3cm] (note1) at (1.97,4.3) {$v$};
			\node[inner sep=0,anchor=west,text width=3.3cm] (note1) at (3.4,4.8) {$c_{1}$};
			\node[inner sep=0,anchor=west,text width=3.3cm] (note1) at (0.5,4.8) {$c_{2}$};
			\node[inner sep=0,anchor=west,text width=3.3cm] (note1) at (0.5,1.2) {$c_{1}'$};
			\node[inner sep=0,anchor=west,text width=3.3cm] (note1) at (3.4,1.2) {$c_{2}'$};
			\draw[dotted] [line width=0.5mm](0,0) -- (2,2);
			\draw[dotted] [line width=0.5mm](4,0) -- (2,2);
			\draw[dotted] [line width=0.5mm](0,0) -- (0,2);
			\draw[dotted] [line width=0.5mm](4,2) -- (4,0);
			\draw[dotted] [line width=0.5mm](0,2) -- (4,2);
			\draw[dotted] [line width=0.5mm](2,2) -- (2,4);
			\draw[dotted] [line width=0.5mm](0,4) -- (4,4);
			\draw[dotted] [line width=0.5mm](0,4) -- (0,6);
			\draw[dotted] [line width=0.5mm](4,4) -- (4,6);
			\draw[dotted] [line width=0.5mm](0,6) -- (2,4);
			\draw[dotted] [line width=0.5mm](4,6) -- (2,4);
			\draw (0,0) node[circle,fill,inner sep=2pt] {};
			\draw (0,6) node[circle,fill,inner sep=2pt] {};
			\draw (4,6) node[circle,fill,inner sep=2pt] {};
			\draw (4,0) node[circle,fill,inner sep=2pt] {};
			\draw (0,2) node[circle,fill,inner sep=2pt] {};
			\draw (4,2) node[circle,fill,inner sep=2pt] {};
			\draw (4,4) node[circle,fill,inner sep=2pt] {};
			\draw (2,2) node[circle,fill,inner sep=2pt] {};
			\draw (2,4) node[circle,fill,inner sep=2pt] {};
			\draw (0,3) node[circle,fill,inner sep=2pt] {};
			\draw (4,3) node[circle,fill,inner sep=2pt] {};
			\draw (1,2) node[circle,fill,inner sep=2pt] {};
			\draw (1,4) node[circle,fill,inner sep=2pt] {};
			\draw (3,2) node[circle,fill,inner sep=2pt] {};
			\draw (3,4) node[circle,fill,inner sep=2pt] {};
			\node[inner sep=0,anchor=west,text width=3.3cm] (note1) at (4.2,0) {$v_{21}'$};
			\node[inner sep=0,anchor=west,text width=3.3cm] (note1) at (-0.6,0) {$v_{12}'$};
			\node[inner sep=0,anchor=west,text width=3.3cm] (note1) at (0.9,1.7) {$v_{11}'$};
			\node[inner sep=0,anchor=west,text width=3.3cm] (note1) at (0.9,4.3) {$v_{22}$};
			\node[inner sep=0,anchor=west,text width=3.3cm] (note1) at (-0.6,6) {$v_{21}$};
			\node[inner sep=0,anchor=west,text width=3.3cm] (note1) at (2.9,1.7) {$v_{22}'$};
			\node[inner sep=0,anchor=west,text width=3.3cm] (note1) at (2.9,4.3) {$v_{11}$};
			\node[inner sep=0,anchor=west,text width=3.3cm] (note1) at (4.2,6) {$v_{12}$};
			\draw[thick,->] (3,5) -- (3.3,4.7);
			\draw[thick,->] (1,2) -- (1.3,2);
			\draw[thick,->] (3,2) -- (2.7,2);
			\draw[thick,->] (1,4) -- (1.3,4);
			\draw[thick,->] (3,4) -- (3.3,4);
			\draw[thick,->] (0,0) -- (0.3,0.3);
			\draw[thick,->] (2,0) -- (2,0.3);
			\draw[thick,->] (0,0) -- (0.3,0.3);
			\draw[thick,->] (3,1) -- (3.3,1.3);
			\draw[thick,->] (4,0) -- (4,0.3);
			\draw[thick,->] (4,2) -- (3.7,2);
			\draw[thick,->] (0,2) -- (0.3,2);
			\draw[thick,->] (0,1) -- (0.3,1);
			\draw[thick,->] (0,2.5) -- (0.3,2.5);
			\draw[thick,->] (4,2.5) -- (3.7,2.5);
			\draw[thick,->] (2,2) -- (2,2.3);
			\draw[thick,->] (0,3.5) -- (-0.3,3.5);
			\draw[thick,->] (0,4) -- (0.3,4);
			\draw[thick,->] (2,4) -- (2.3,4);
			\draw[thick,->] (4,4) -- (4,4.3);
			\draw[thick,->] (4,6) -- (3.9,5.7);
			\draw[thick,->] (2,6) -- (2,5.7);
			\draw[thick,->] (0,5) -- (0.3,5);
			\draw[thick,->] (0,6) -- (0.3,5.7);
			\draw[thick,->] (0.75,3.38) -- (0.5,3.65);
			\draw[thick,->] (0,3) -- (-0.3,3);
			\draw[thick,->] (1,2.5) -- (1.2,2.8);
			\draw[thick,->] (3,2.5) -- (2.8,2.8);
			\draw[thick,->] (3.35,3.33) -- (3.59,3.62);
			\draw[thick,->] (4,3) -- (4.3,3);
			\draw[thick,->] (4,3.5) -- (4.3,3.5);
			\end{tikzpicture}
		} %
		\qquad
		\resizebox{0.47\textwidth}{!}{%
			\begin{tikzpicture}[scale=2]
			\draw (0,2)--(0,4);
			\draw (0,6)--(4,6);
			\draw (4,2)--(4,4);
			\draw (0,0)--(4,0);
			\draw (2,4) -- (0,6);
			\draw (2,4) -- (4,6);
			\draw (2,2) -- (0,0);
			\draw (2,2) -- (4,0);
			\draw (0,2) -- (4,2);
			\draw (0,4) -- (4,4);
			\draw (0,3) -- (2,4);
			\draw (0,3) -- (2,2);
			\draw (4,3) -- (2,2);
			\draw (4,3) -- (2,4);
			\draw[fill=black!20] (0,6)--(2,4)--(4,6)--(0,6);
			\draw[fill=black!20] (0,0)--(2,2)--(4,0)--(0,0);
			\draw[fill=black!20] (0,2)--(2,2)--(0,3)--(0,2);
			\draw[fill=black!20] (0,3)--(2,2)--(2,4)--(0,3);
			\draw[fill=black!20] (0,3)--(2,4)--(0,4)--(0,3);
			\draw[fill=black!20] (2,2)--(4,2)--(4,3)--(2,2);
			\draw[fill=black!20] (2,2)--(2,4)--(4,3)--(2,2);
			\draw[fill=black!20] (4,3)--(2,4)--(4,4)--(4,3);
			\node[inner sep=0,anchor=west,text width=3.3cm] (note1) at (2.1,3) {$a$};
			\node[inner sep=0,anchor=west,text width=3.3cm] (note1) at (1.9,1.55) {$v'$};
			\node[inner sep=0,anchor=west,text width=3.3cm] (note1) at (1.97,4.3) {$v$};
			\node[inner sep=0,anchor=west,text width=3.3cm] (note1) at (3.4,4.8) {$c_{1}$};
			\node[inner sep=0,anchor=west,text width=3.3cm] (note1) at (0.5,4.8) {$c_{2}$};
			\node[inner sep=0,anchor=west,text width=3.3cm] (note1) at (0.5,1.2) {$c_{1}'$};
			\node[inner sep=0,anchor=west,text width=3.3cm] (note1) at (3.4,1.2) {$c_{2}'$};
			\draw [dotted] [line width=0.5mm](0,0) -- (2,2);
			\draw [dotted][line width=0.5mm](4,0) -- (2,2);
			\draw[dotted] [line width=0.5mm](0,0) -- (0,2);
			\draw[dotted] [line width=0.5mm](4,2) -- (4,0);
			\draw [dotted][line width=0.5mm](0,2) -- (4,2);
			\draw[dotted] [line width=0.5mm](2,2) -- (2,4);
			\draw[dotted] [line width=0.5mm](0,4) -- (4,4);
			\draw[dotted] [line width=0.5mm](0,4) -- (0,6);
			\draw[dotted] [line width=0.5mm](4,4) -- (4,6);
			\draw[dotted] [line width=0.5mm](0,6) -- (2,4);
			\draw[dotted] [line width=0.5mm](4,6) -- (2,4);
			\draw (0,0) node[circle,fill,inner sep=2pt] {};
			\draw (0,6) node[circle,fill,inner sep=2pt] {};
			\draw (4,6) node[circle,fill,inner sep=2pt] {};
			\draw (4,0) node[circle,fill,inner sep=2pt] {};
			\draw (0,2) node[circle,fill,inner sep=2pt] {};
			\draw (4,2) node[circle,fill,inner sep=2pt] {};
			\draw (4,4) node[circle,fill,inner sep=2pt] {};
			\draw (2,2) node[circle,fill,inner sep=2pt] {};
			\draw (2,4) node[circle,fill,inner sep=2pt] {};
			\draw (0,3) node[circle,fill,inner sep=2pt] {};
			\draw (4,3) node[circle,fill,inner sep=2pt] {};
			\draw (1,2) node[circle,fill,inner sep=2pt] {};
			\draw (1.5,2.25) node[circle,fill,inner sep=2pt] {};
			\draw (1.5,3.75) node[circle,fill,inner sep=2pt] {};
			\draw (1,4) node[circle,fill,inner sep=2pt] {};
			\draw (3,2) node[circle,fill,inner sep=2pt] {};
			\draw (2.5,2.25) node[circle,fill,inner sep=2pt] {};
			\draw (2.5,3.75) node[circle,fill,inner sep=2pt] {};
			\draw (3,4) node[circle,fill,inner sep=2pt] {};
						\node[inner sep=0,anchor=west,text width=3.3cm] (note1) at (4.2,0) {$v_{21}'$};
						\node[inner sep=0,anchor=west,text width=3.3cm] (note1) at (-0.6,0) {$v_{12}'$};
						\node[inner sep=0,anchor=west,text width=3.3cm] (note1) at (0.9,1.7) {$v_{11}'$};
						\node[inner sep=0,anchor=west,text width=3.3cm] (note1) at (0.9,4.3) {$v_{22}$};
						\node[inner sep=0,anchor=west,text width=3.3cm] (note1) at (-0.6,6) {$v_{21}$};
						\node[inner sep=0,anchor=west,text width=3.3cm] (note1) at (2.9,1.7) {$v_{22}'$};
						\node[inner sep=0,anchor=west,text width=3.3cm] (note1) at (2.9,4.3) {$v_{11}$};
						\node[inner sep=0,anchor=west,text width=3.3cm] (note1) at (4.2,6) {$v_{12}$};
			\draw (1,2) -- (1.5,2.25);
			\draw (1.5,2.25) -- (1.5,3.75);
			\draw (1.5,3.75) -- (1,4);
			\draw (3,2) -- (2.5,2.25);
			\draw (2.5,2.25) -- (2.5,3.75);
			\draw (2.5,3.75) -- (3,4);
			\draw[thick,->] (3,5) -- (3.3,4.7);
			\draw[thick,->] (1,2) -- (1.3,2);
			\draw[thick,->] (3,2) -- (2.7,2);
			\draw[thick,->] (1,4) -- (1.3,4);
			\draw[thick,->] (3,4) -- (3.3,4);
			\draw[thick,->] (0,0) -- (0.3,0.3);
			\draw[thick,->] (2,0) -- (2,0.3);
			\draw[thick,->] (0,0) -- (0.3,0.3);
			\draw[thick,->] (3,1) -- (3.3,1.3);
			\draw[thick,->] (4,0) -- (4,0.3);
			\draw[thick,->] (4,2) -- (3.7,2);
			\draw[thick,->] (0,2) -- (0.3,2);
			\draw[thick,->] (0,1) -- (0.3,1);
			\draw[thick,->] (0,2.5) -- (0.3,2.5);
			\draw[thick,->] (4,2.5) -- (3.7,2.5);
			\draw[thick,->] (2,2) -- (2,2.3);
			\draw[thick,->] (0,3.5) -- (-0.3,3.5);
			\draw[thick,->] (0,4) -- (0.3,4);
			\draw[thick,->] (2,4) -- (2.3,4);
			\draw[thick,->] (4,4) -- (4,4.3);
			\draw[thick,->] (4,6) -- (3.9,5.7);
			\draw[thick,->] (2,6) -- (2,5.7);
			\draw[thick,->] (0,5) -- (0.3,5);
			\draw[thick,->] (0,6) -- (0.3,5.7);
			\draw[thick,->] (0.75,3.38) -- (0.5,3.65);
			\draw[thick,->] (0,3) -- (-0.3,3);
			\draw[thick,->] (1,2.5) -- (1.2,2.8);
			\draw[thick,->] (3,2.5) -- (2.8,2.8);
			\draw[thick,->] (3.35,3.33) -- (3.59,3.62);
			\draw[thick,->] (4,3) -- (4.3,3);
			\draw[thick,->] (4,3.5) -- (4.3,3.5);
			\end{tikzpicture}
		} %
		\caption{}
		\label{fig:connecting edge}
	\end{figure}

	\begin{figure}[h]
		\resizebox{0.47\textwidth}{!}{%
			\begin{tikzpicture}[scale=2]
			\draw (0,2)--(0,4);
			\draw (0,6)--(4,6);
			\draw (4,2)--(4,4);
			\draw (0,0)--(4,0);
			\draw (2,4) -- (0,6);
			\draw (2,4) -- (4,6);
			\draw (2,2) -- (0,0);
			\draw (2,2) -- (4,0);
			\draw (0,2) -- (4,2);
			\draw (0,4) -- (4,4);
			\draw (2,2) -- (2,4);
			\draw (0,3) -- (2,4);
			\draw (0,3) -- (2,2);
			\draw (4,3) -- (2,2);
			\draw (4,3) -- (2,4);
			\draw[fill=black!20] (0,6)--(2,4)--(4,6)--(0,6);
			\draw[fill=black!20] (0,0)--(2,2)--(4,0)--(0,0);
			\draw[fill=black!20] (0,2)--(1,2)--(1.5,2.25)--(0,3)--(0,2);
			\draw[fill=black!20] (0,3)--(1.5,2.25)--(1.5,3.75)--(0,3);
			\draw[fill=black!20] (0,3)--(1.5,3.75)--(1,4)--(0,4)--(0,3);
			\draw[fill=black!20] (3,2)--(4,2)--(4,3)--(2.5,2.25)--(3,2);
			\draw[fill=black!20] (2.5,2.25)--(4,3)--(2.5,3.75)--(2.5,2.25);
			\draw[fill=black!20] (4,3)--(2.5,3.75)--(3,4)--(4,4)--(4,3);
			\node[inner sep=0,anchor=west,text width=3.3cm] (note1) at (1.9,1.55) {$v'$};
			\node[inner sep=0,anchor=west,text width=3.3cm] (note1) at (1.97,4.3) {$v$};
			\node[inner sep=0,anchor=west,text width=3.3cm] (note1) at (3.4,4.8) {$c_{1}$};
			\node[inner sep=0,anchor=west,text width=3.3cm] (note1) at (0.5,4.8) {$c_{2}$};
			\node[inner sep=0,anchor=west,text width=3.3cm] (note1) at (0.5,1.2) {$c_{1}'$};
			\node[inner sep=0,anchor=west,text width=3.3cm] (note1) at (3.4,1.2) {$c_{2}'$};
			\node[inner sep=0,anchor=west,text width=3.3cm] (note1) at (2.1,3) {$a$};
			\node[inner sep=0,anchor=west,text width=3.3cm] (note1) at (1,2.2) {$e_{1}$};
			\node[inner sep=0,anchor=west,text width=3.3cm] (note1) at (1.2,3) {$e_{2}$};
			\node[inner sep=0,anchor=west,text width=3.3cm] (note1) at (0.9,3.8) {$e_{3}$};
			\node[inner sep=0,anchor=west,text width=3.3cm] (note1) at (2.8,2.2) {$e_{4}$};
			\node[inner sep=0,anchor=west,text width=3.3cm] (note1) at (2.5,3) {$e_{5}$};
			\node[inner sep=0,anchor=west,text width=3.3cm] (note1) at (2.73,3.75) {$e_{6}$};
			\draw [dotted][line width=0.5mm](0,0) -- (2,2);
			\draw[dotted] [line width=0.5mm](4,0) -- (2,2);
			\draw[dotted] [line width=0.5mm](0,0) -- (0,2);
			\draw [dotted][line width=0.5mm](4,2) -- (4,0);
			\draw [dotted][line width=0.5mm](0,2) -- (1,2);
			\draw [dotted][line width=0.5mm](3,2) -- (4,2);
			\draw [dotted][line width=0.5mm](0,4) -- (1,4);
			\draw [dotted][line width=0.5mm](3,4) -- (4,4);
			\draw[dotted] [line width=0.5mm](0,4) -- (0,6);
			\draw[dotted] [line width=0.5mm](4,4) -- (4,6);
			\draw[dotted] [line width=0.5mm](0,6) -- (2,4);
			\draw[dotted] [line width=0.5mm](4,6) -- (2,4);
			\draw (0,0) node[circle,fill,inner sep=2pt] {};
			\draw (0,6) node[circle,fill,inner sep=2pt] {};
			\draw (4,6) node[circle,fill,inner sep=2pt] {};
			\draw (4,0) node[circle,fill,inner sep=2pt] {};
			\draw (0,2) node[circle,fill,inner sep=2pt] {};
			\draw (4,2) node[circle,fill,inner sep=2pt] {};
			\draw (4,4) node[circle,fill,inner sep=2pt] {};
			\draw (2,2) node[circle,fill,inner sep=2pt] {};
			\draw (2,4) node[circle,fill,inner sep=2pt] {};
			\draw (0,3) node[circle,fill,inner sep=2pt] {};
			\draw (4,3) node[circle,fill,inner sep=2pt] {};
			\draw [dotted] [line width=0.5mm](1,2) -- (1.5,2.25);
			\draw [dotted][line width=0.5mm](1.5,2.25) -- (1.5,3.75);
			\draw [dotted][line width=0.5mm](1.5,3.75) -- (1,4);
			\draw[dotted] [line width=0.5mm](3,2) -- (2.5,2.25);
			\draw [dotted][line width=0.5mm](2.5,2.25) -- (2.5,3.75);
			\draw [dotted][line width=0.5mm](2.5,3.75) -- (3,4);
			\draw (1,2) node[circle,fill,inner sep=2pt] {};
			\draw (1.5,2.25) node[circle,fill,inner sep=2pt] {};
			\draw (1.5,3.75) node[circle,fill,inner sep=2pt] {};
			\draw (1,4) node[circle,fill,inner sep=2pt] {};
			\draw (3,2) node[circle,fill,inner sep=2pt] {};
			\draw (2.5,2.25) node[circle,fill,inner sep=2pt] {};
			\draw (2.5,3.75) node[circle,fill,inner sep=2pt] {};
			\draw (3,4) node[circle,fill,inner sep=2pt] {};
					\node[inner sep=0,anchor=west,text width=3.3cm] (note1) at (4.2,0) {$v_{21}'$};
					\node[inner sep=0,anchor=west,text width=3.3cm] (note1) at (-0.6,0) {$v_{12}'$};
					\node[inner sep=0,anchor=west,text width=3.3cm] (note1) at (0.9,1.7) {$v_{11}'$};
					\node[inner sep=0,anchor=west,text width=3.3cm] (note1) at (0.9,4.3) {$v_{22}$};
					\node[inner sep=0,anchor=west,text width=3.3cm] (note1) at (-0.6,6) {$v_{21}$};
					\node[inner sep=0,anchor=west,text width=3.3cm] (note1) at (2.9,1.7) {$v_{22}'$};
					\node[inner sep=0,anchor=west,text width=3.3cm] (note1) at (2.9,4.3) {$v_{11}$};
					\node[inner sep=0,anchor=west,text width=3.3cm] (note1) at (4.2,6) {$v_{12}$};
			\draw[thick,->] (3,5) -- (3.3,4.7);
			\draw[thick,->] (1,2) -- (1.3,2);
			\draw[thick,->] (3,2) -- (2.7,2);
			\draw[thick,->] (1,4) -- (1.3,4);
			\draw[thick,->] (3,4) -- (3.3,4);
			\draw[thick,->] (0,0) -- (0.3,0.3);
			\draw[thick,->] (2,0) -- (2,0.3);
			\draw[thick,->] (0,0) -- (0.3,0.3);
			\draw[thick,->] (3,1) -- (3.3,1.3);
			\draw[thick,->] (4,0) -- (4,0.3);
			\draw[thick,->] (4,2) -- (3.7,2);
			\draw[thick,->] (0,2) -- (0.3,2);
			\draw[thick,->] (0,1) -- (0.3,1);
			\draw[thick,->] (0,2.5) -- (0.3,2.5);
			\draw[thick,->] (4,2.5) -- (3.7,2.5);
			\draw[thick,->] (2,2) -- (2,2.3);
			\draw[thick,->] (0,3.5) -- (-0.3,3.5);
			\draw[thick,->] (0,4) -- (0.3,4);
			\draw[thick,->] (2,4) -- (2.3,4);
			\draw[thick,->] (4,4) -- (4,4.3);
			\draw[thick,->] (4,6) -- (3.9,5.7);
			\draw[thick,->] (2,6) -- (2,5.7);
			\draw[thick,->] (0,5) -- (0.3,5);
			\draw[thick,->] (0,6) -- (0.3,5.7);
			\draw[thick,->] (0.75,3.38) -- (0.5,3.65);
			\draw[thick,->] (0,3) -- (-0.3,3);
			\draw[thick,->] (1,2.5) -- (1.2,2.8);
			\draw[thick,->] (3,2.5) -- (2.8,2.8);
			\draw[thick,->] (3.35,3.33) -- (3.59,3.62);
			\draw[thick,->] (4,3) -- (4.3,3);
			\draw[thick,->] (4,3.5) -- (4.3,3.5);
			\end{tikzpicture}
		} %
		\qquad
		\resizebox{0.47\textwidth}{!}{%
			\begin{tikzpicture}[scale=2]
			\draw (0,2)--(0,4);
			\draw (0,6)--(4,6);
			\draw (4,2)--(4,4);
			\draw (0,0)--(4,0);
			\draw (2,4) -- (0,6);
			\draw (2,4) -- (4,6);
			\draw (2,2) -- (0,0);
			\draw (2,2) -- (4,0);
			\draw (0,2) -- (4,2);
			\draw (0,4) -- (4,4);
			\draw (2,2) -- (2,4);
			\draw (0,3) -- (2,4);
			\draw (0,3) -- (2,2);
			\draw (4,3) -- (2,2);
			\draw (4,3) -- (2,4);
			\draw[fill=black!20] (0,6)--(2,4)--(4,6)--(0,6);
			\draw[fill=black!20] (0,0)--(2,2)--(4,0)--(0,0);
			\draw[fill=black!20] (0,2)--(1,2)--(1.5,2.25)--(0,3)--(0,2);
			\draw[fill=black!20] (0,3)--(1.5,2.25)--(1.5,3.75)--(0,3);
			\draw[fill=black!20] (0,3)--(1.5,3.75)--(1,4)--(0,4)--(0,3);
			\draw[fill=black!20] (3,2)--(4,2)--(4,3)--(2.5,2.25)--(3,2);
			\draw[fill=black!20] (2.5,2.25)--(4,3)--(2.5,3.75)--(2.5,2.25);
			\draw[fill=black!20] (4,3)--(2.5,3.75)--(3,4)--(4,4)--(4,3);
			\node[inner sep=0,anchor=west,text width=3.3cm] (note1) at (1.9,1.55) {$v'$};
			\node[inner sep=0,anchor=west,text width=3.3cm] (note1) at (1.97,4.3) {$v$};
			\node[inner sep=0,anchor=west,text width=3.3cm] (note1) at (3.4,4.8) {$c_{1}$};
			\node[inner sep=0,anchor=west,text width=3.3cm] (note1) at (0.5,4.8) {$c_{2}$};
			\node[inner sep=0,anchor=west,text width=3.3cm] (note1) at (0.5,1.2) {$c_{1}'$};
			\node[inner sep=0,anchor=west,text width=3.3cm] (note1) at (3.4,1.2) {$c_{2}'$};
			\node[inner sep=0,anchor=west,text width=3.3cm] (note1) at (2.02,3.3) {$a$};
			\node[inner sep=0,anchor=west,text width=3.3cm] (note1) at (1,2.2) {$e_{1}$};
			\node[inner sep=0,anchor=west,text width=3.3cm] (note1) at (1.2,3) {$e_{2}$};
			\node[inner sep=0,anchor=west,text width=3.3cm] (note1) at (0.9,3.8) {$e_{3}$};
			\node[inner sep=0,anchor=west,text width=3.3cm] (note1) at (2.8,2.2) {$e_{4}$};
			\node[inner sep=0,anchor=west,text width=3.3cm] (note1) at (2.5,3) {$e_{5}$};
			\node[inner sep=0,anchor=west,text width=3.3cm] (note1) at (2.73,3.75) {$e_{6}$};
			\draw [dotted] [line width=0.5mm](0,0) -- (2,2);
			\draw [dotted] [line width=0.5mm](4,0) -- (2,2);
			\draw [dotted] [line width=0.5mm](0,0) -- (0,2);
			\draw [dotted] [line width=0.5mm](4,2) -- (4,0);
			\draw [dotted] [line width=0.5mm](0,2) -- (1,2);
			\draw [dotted] [line width=0.5mm](3,2) -- (4,2);
			\draw [dotted][line width=0.5mm](0,4) -- (1,4);
			\draw [dotted][line width=0.5mm](3,4) -- (4,4);
			\draw [dotted] [line width=0.5mm](0,4) -- (0,6);
			\draw [dotted] [line width=0.5mm](4,4) -- (4,6);
			\draw [dotted] [line width=0.5mm](0,6) -- (2,4);
			\draw [dotted] [line width=0.5mm](4,6) -- (2,4);
			\draw (0,0) node[circle,fill,inner sep=2pt] {};
			\draw (0,6) node[circle,fill,inner sep=2pt] {};
			\draw (4,6) node[circle,fill,inner sep=2pt] {};
			\draw (4,0) node[circle,fill,inner sep=2pt] {};
			\draw (0,2) node[circle,fill,inner sep=2pt] {};
			\draw (4,2) node[circle,fill,inner sep=2pt] {};
			\draw (4,4) node[circle,fill,inner sep=2pt] {};
			\draw (2,2) node[circle,fill,inner sep=2pt] {};
			\draw (2,4) node[circle,fill,inner sep=2pt] {};
			\draw (0,3) node[circle,fill,inner sep=2pt] {};
			\draw (4,3) node[circle,fill,inner sep=2pt] {};
			\draw[dotted] [line width=0.5mm](1,2) -- (1.5,2.25);
			\draw[dotted] [line width=0.5mm](1.5,2.25) -- (1.5,3.75);
			\draw[dotted] [line width=0.5mm](1.5,3.75) -- (1,4);
			\draw [dotted][line width=0.5mm](3,2) -- (2.5,2.25);
			\draw[dotted] [line width=0.5mm](2.5,2.25) -- (2.5,3.75);
			\draw [dotted][line width=0.5mm](2.5,3.75) -- (3,4);
			\draw (1,2) node[circle,fill,inner sep=2pt] {};
			\draw (1.5,2.25) node[circle,fill,inner sep=2pt] {};
			\draw (1.5,3.75) node[circle,fill,inner sep=2pt] {};
			\draw (1,4) node[circle,fill,inner sep=2pt] {};
			\draw (3,2) node[circle,fill,inner sep=2pt] {};
			\draw (2.5,2.25) node[circle,fill,inner sep=2pt] {};
			\draw (2.5,3.75) node[circle,fill,inner sep=2pt] {};
			\draw (3,4) node[circle,fill,inner sep=2pt] {};
					\node[inner sep=0,anchor=west,text width=3.3cm] (note1) at (4.2,0) {$v_{21}'$};
					\node[inner sep=0,anchor=west,text width=3.3cm] (note1) at (-0.6,0) {$v_{12}'$};
					\node[inner sep=0,anchor=west,text width=3.3cm] (note1) at (0.9,1.7) {$v_{11}'$};
					\node[inner sep=0,anchor=west,text width=3.3cm] (note1) at (0.9,4.3) {$v_{22}$};
					\node[inner sep=0,anchor=west,text width=3.3cm] (note1) at (-0.6,6) {$v_{21}$};
					\node[inner sep=0,anchor=west,text width=3.3cm] (note1) at (2.9,1.7) {$v_{22}'$};
					\node[inner sep=0,anchor=west,text width=3.3cm] (note1) at (2.9,4.3) {$v_{11}$};
					\node[inner sep=0,anchor=west,text width=3.3cm] (note1) at (4.2,6) {$v_{12}$};
			\draw[thick,->] (1,2) -- (1.3,2);
			\draw[thick,->] (3,2) -- (2.7,2);
			\draw[thick,->] (1,4) -- (1.3,4);
			\draw[thick,->] (3,4) -- (3.3,4);
			\draw[thick,->] (0,0) -- (0.3,0.3);
			\draw[thick,->] (2,0) -- (2,0.3);
			\draw[thick,->] (0,0) -- (0.3,0.3);
			\draw[thick,->] (3,1) -- (3.3,1.3);
			\draw[thick,->] (4,0) -- (4,0.3);
			\draw[thick,->] (4,2) -- (3.7,2);
			\draw[thick,->] (0,2) -- (0.3,2);
			\draw[thick,->] (0,1) -- (0.3,1);
			\draw[thick,->] (0,2.5) -- (0.3,2.5);
			\draw[thick,->] (4,2.5) -- (3.7,2.5);
			\draw[thick,->] (2,2) -- (2,2.3);
			\draw[thick,->] (0,3.5) -- (-0.3,3.5);
			\draw[thick,->] (0,4) -- (0.3,4);
			\draw[thick,->] (2,4) -- (2.3,4);
			\draw[thick,->] (4,4) -- (4,4.3);
			\draw[thick,->] (4,6) -- (3.9,5.7);
			\draw[thick,->] (2,6) -- (2,5.7);
			\draw[thick,->] (0,5) -- (0.3,5);
			\draw[thick,->] (0,6) -- (0.3,5.7);
			\draw[thick,->] (0.75,3.38) -- (0.5,3.65);
			\draw[thick,->] (0,3) -- (-0.3,3);
			\draw[thick,->] (1,2.5) -- (1.2,2.8);
			\draw[thick,->] (3,2.5) -- (2.8,2.8);
			\draw[thick,->] (3.35,3.33) -- (3.59,3.62);
			\draw[thick,->] (4,3) -- (4.3,3);
			\draw[thick,->] (4,3.5) -- (4.3,3.5);
			\draw[thick,->] (1,2) -- (1.3,2);
			\draw[thick,->] (3,2) -- (2.7,2);
			\draw[thick,dotted,->][line width=0.5mm] (1.21,2.1) -- (1.57,2.1);
			\draw[thick,dotted,->][line width=0.5mm] (1.5,2.25) -- (1.9,2.05);
			\draw[thick,dotted,->][line width=0.5mm] (1.5,3) -- (1.9,3);
			\draw[thick,dotted,->][line width=0.5mm] (1.5,3.75) -- (1.9,3.95);
			\draw[thick,->] (1,4) -- (1.3,4);
			\draw[thick,dotted,->][line width=0.5mm] (1.24,3.86) -- (1.52,3.9);
			\draw[thick,dotted,->][line width=0.5mm] (2.8,2.1) -- (2.47,2.1);
			\draw[thick,dotted,->][line width=0.5mm] (2.5,2.25) -- (2.1,2.05);
			\draw[thick,dotted,->][line width=0.5mm] (2.5,3) -- (2.1,3);
			\draw[thick,dotted,->][line width=0.5mm] (2.5,3.75) -- (2.1,3.95);
			\draw[thick,dotted,->][line width=0.5mm] (2.7,3.86) -- (2.48,3.92);
			\draw[thick,->] (3,4) -- (3.3,4);
			\end{tikzpicture}
		} %
		\caption{}
		\label{fig:circle}
	\end{figure}
	
	In Figure \ref{fig:circle}, the union of the edges $e_{1}, e_{2}, e_{3}$ and that of the edges $e_{4}, e_{5}, e_{6}$ are a part of the boundary curve taken instead of the arc $a$.
	
	After eliminating all such additional arcs, the curve is either a circle or a single wedge of circles or wedges of circles attached to some additional circles at wedge points.
	If it is a circle, we have succeed our aim.  If it is a single wedge of circles, then we use the method in \textit{Case} 1 to obtain a single circle.  
	
	\textit{Case} 3: Assume the curve is made up of two wedges of circles attached to an additional circle at wedge points of each wedge.  Let $E=\displaystyle  \vee_{i}   d_{i}$ be the wedge of the circles $d_{i}$ at the point $v$ 
	for $i=1,2,\ldots,n$ and $F=\displaystyle  \vee_{j}   d_{j}'$ be the wedge of the  circles $d_{j}'$ at the point $v'$ for $j=1,2,\ldots,m$.  
	Let $d_{0}$ be the additional circle connecting  $E$ and $F$ such that $E\cap d_{0}=\{v\}$ and $F\cap d_{0}=\{v'\}$.  Except for one wedge point, we reduce the number of circles passing through each wedge point to one to obtain a single wedge of circles as the boundary curve by using the method given in \textit{Case} 1.  In here, we reduce the number of circles passing from $v$ on $E$ to one without losing generality.  Let $d_{0}'$ be the resulting circle such that $v\in d_{0}'$ and $F\cap d_{0}'=\{v'\}$.  Now, the boundary curve is a wedge of circles at the vertex $v'$, see Figures \ref{fig:wedge2} and \ref{fig:unwedge2} where $n=2, m=2, t_{i}=3 $.  Therefore, we can use the method explained in \textit{Case} 1 to obtain a circle as the boundary curve.  Note that if the boundary curve has more than two wedges of circles and more than one additional circle after eliminating the additional interior arcs, we repeat the process given in \textit{Case} 3 consecutively until obtai
 ning a s
 ingle circle as the boundary curve.

  the pro
 cess given in \textit{Case} 3 consecutively up to obtaining a single circle as the boundary curve.


	The boundary curve $C'$ in Figure \ref{fig:unwedge2} is a wedge of $3$-circles at the point $v'$.  
	The gray regions are a part of the tunnels, while the white region is a part of the area outside of the tunnel, and the dotted 
	curve is a part of the boundary curve.  The operations described in this proof correspond to pushing the boundary curve into $M-M_{1}$ at the wedge 
	points $v$ and $v'$.   
	
	\begin{figure}[h]
		\centering
		\resizebox{0.45\textwidth}{!}{%
			\begin{tikzpicture}[scale=2]
			\draw (0,2)--(0,4);
			\draw (2,4)--(4,4);
			\draw (4,0)--(0,0);
			\draw (2,0) -- (2,2);
			\draw (2,2.7) -- (2,2);
			\draw (2,2) -- (2.7,2);
			\draw (6,0) -- (6,2);
			\draw (6,4) -- (4,4);
			\draw[fill=black!20] (0,0)--(2,0)--(2,2)--(0,0);
			\draw[fill=black!20] (2,2)--(2,4)--(4,4)--(2,2);
			\draw[fill=black!20] (0,2)--(0,4)--(2,2)--(0,2);
			\draw[fill=black!20] (2,2)--(2,0)--(4,0)--(2,2);
			\draw[fill=black!20] (2,2)--(4,0)--(4,2)--(2,2);
			\draw[fill=black!20] (4,2)--(6,0)--(6,2)--(4,2);
			\draw[fill=black!20] (4,2)--(6,4)--(4,4)--(4,2);
			\draw (0,0) -- (4,4);
			\node[inner sep=0,anchor=west,text width=3.3cm] (note1) at (5,2.7) {$d_{2}'$};
			\node[inner sep=0,anchor=west,text width=3.3cm] (note1) at (4.3,0.6) {$d_{1}'$};
			\node[inner sep=0,anchor=west,text width=3.3cm] (note1) at (0.3,1.5) {$d_{2}$};
			\node[inner sep=0,anchor=west,text width=3.3cm] (note1) at (1.2,3.3) {$d_{1}$};
			\node[inner sep=0,anchor=west,text width=3.3cm] (note1) at (3.35,2.7) {$d_{0}$};
			\node[inner sep=0,anchor=west,text width=3.3cm] (note1) at (2,1.8) {$v$};
			\node[inner sep=0,anchor=west,text width=3.3cm] (note1) at (3.65,2.2) {$v'$};
			\draw[dotted] [line width=0.5mm](0,4) -- (2,4);
			\draw[dotted] [line width=0.5mm](2,2) -- (0,2);
			\draw[dotted] [line width=0.5mm](2,2) -- (4,4);
			\draw[dotted] [line width=0.5mm](4,2) -- (4,0);
			\draw[dotted] [line width=0.5mm](4,0) -- (4,2);
			\draw[dotted] [line width=0.5mm](4,0) -- (6,0);
			\draw[dotted] [line width=0.5mm](6,0) -- (4,2);
			\draw[dotted] [line width=0.5mm](4,2) -- (6,2);
			\draw[dotted] [line width=0.5mm](4,2) -- (6,4);
			\draw[dotted] [line width=0.5mm](6,4) -- (6,2);
			\draw[dotted] [line width=0.5mm](2,2) -- (4,2);
			\draw[dotted] [line width=0.5mm](0,0) -- (2,2);
			\draw[dotted] [line width=0.5mm](2,4) -- (2,2);
			\draw[dotted] [line width=0.5mm](0,0) -- (0,2);
			\draw[dotted] [line width=0.5mm](4,4) -- (4,2);
			\draw (0,0) node[circle,fill,inner sep=2pt] {};
			\draw (2,0) node[circle,fill,inner sep=2pt] {};
			\draw (4,0) node[circle,fill,inner sep=2pt] {};
			\draw (0,2) node[circle,fill,inner sep=2pt] {};
			\draw (2,2) node[circle,fill,inner sep=2pt] {};
			\draw (4,2) node[circle,fill,inner sep=2pt] {};
			\draw (0,4) node[circle,fill,inner sep=2pt] {};
			\draw (2,4) node[circle,fill,inner sep=2pt] {};
			\draw (4,4) node[circle,fill,inner sep=2pt] {};
			\draw (2,2.7) node[circle,fill,inner sep=2pt] {};
			\draw (2.6,2.6) node[circle,fill,inner sep=2pt] {};
			\draw (2.7,2) node[circle,fill,inner sep=2pt] {};
			\draw (1.4,1.4) node[circle,fill,inner sep=2pt] {};
			\node[inner sep=0,anchor=west,text width=3.3cm] (note1) at (2.7,2.5) {$v_{2}$};
			\node[inner sep=0,anchor=west,text width=3.3cm] (note1) at (2.7,1.8) {$v_{1}$};
			\node[inner sep=0,anchor=west,text width=3.3cm] (note1) at (1.5,2.7) {$v_{11}$};
			\node[inner sep=0,anchor=west,text width=3.3cm] (note1) at (1.3,1.2) {$v_{22}$};
			\node[inner sep=0,anchor=west,text width=3.3cm] (note1) at (-0.5,4.3) {$v_{12}$};
			\node[inner sep=0,anchor=west,text width=3.3cm] (note1) at (-0.5,2) {$v_{21}$};
			\draw[thick,->] (2,2.7) -- (2,3);
			\draw[thick,->] (2.6,2.6) -- (2.3,2.3);
			\draw[thick,->] (2.7,2) -- (2.3,2);
			\draw[thick,->] (1.4,1.4) -- (1.7,1.7);
			\draw[thick,->] (5,4) -- (5,3.6);
			\draw[thick,->] (6,4) -- (5.7,3.7);
			\draw[thick,->] (6,3) -- (5.6,3);
			\draw[thick,->] (6,2) -- (5.6,2);
			\draw[thick,->] (6,1) -- (5.6,1);
			\draw[thick,->] (5,1) -- (4.7,0.7);
			\draw[thick,->] (6,0) -- (5.6,0);
			\draw[thick,->] (1,4) -- (1,3.6);
			\draw[thick,->] (2,1) -- (2.4,1);
			\draw[thick,->] (3,1) -- (3.3,1.3);
			\draw[thick,->] (0,0) -- (0,0.4);
			\draw[thick,->] (0,2) -- (0.4,2);
			\draw[thick,->] (0,4) -- (0.3,3.7);
			\draw[thick,->] (1,0) -- (1,0.4);
			\draw[thick,->] (1,1) -- (0.7,1.3);
			\draw[thick,->] (0,3) -- (0.4,3);
			\draw[thick,->] (2,0) -- (2,-0.4);
			\draw[thick,->] (2,2) -- (2,2.4);
			\draw[thick,->] (2,4) -- (1.7,3.7);
			\draw[thick,->] (3,0) -- (3,-0.4);
			\draw[thick,->] (4,0) -- (4,0.4);
			\draw[thick,->] (4,2) -- (3.6,2);
			\draw[thick,->] (3,3) -- (3.3,2.7);
			\draw[thick,->] (3,4) -- (3,3.6);
			\draw[thick,->] (4,4) -- (4,3.6);
			\end{tikzpicture}
		} %
		\qquad
		\resizebox{0.45\textwidth}{!}{%
			\begin{tikzpicture}[scale=2]
			\draw (0,2)--(0,4);
			\draw (2,4)--(4,4);
			\draw (4,0)--(0,0);
			\draw (2,0) -- (2,2);
			\draw (2,2.7) -- (2,2);
			\draw (2,2) -- (2.7,2);
			\draw (6,0) -- (6,2);
			\draw (6,4) -- (4,4);
			\draw[fill=black!20] (0,0)--(2,0)--(2,2)--(0,0);
			\draw[fill=black!20] (2,2)--(2,4)--(4,4)--(2,2);
			\draw[fill=black!20] (0,2)--(0,4)--(2,2)--(0,2);
			\draw[fill=black!20] (2,2)--(2,0)--(4,0)--(2,2);
			\draw[fill=black!20] (2,2)--(4,0)--(4,2)--(2,2);
			\draw[fill=black!20] (4,2)--(6,0)--(6,2)--(4,2);
			\draw[fill=black!20] (4,2)--(6,4)--(4,4)--(4,2);
			\draw (0,0) -- (4,4);
			\node[inner sep=0,anchor=west,text width=3.3cm] (note1) at (5,2.7) {$d_{2}'$};
			\node[inner sep=0,anchor=west,text width=3.3cm] (note1) at (4.3,0.6) {$d_{1}'$};
			\node[inner sep=0,anchor=west,text width=3.3cm] (note1) at (0.3,1.5) {$d_{2}$};
			\node[inner sep=0,anchor=west,text width=3.3cm] (note1) at (1.2,3.3) {$d_{1}$};
			\node[inner sep=0,anchor=west,text width=3.3cm] (note1) at (3.35,2.7) {$d_{0}$};
			\node[inner sep=0,anchor=west,text width=3.3cm] (note1) at (2,1.8) {$v$};
			\node[inner sep=0,anchor=west,text width=3.3cm] (note1) at (3.65,2.2) {$v'$};
			\draw[dotted] [line width=0.5mm](2,2) -- (0,4);
			\draw[dotted] [line width=0.5mm](0,4) -- (2,4);
			\draw[dotted] [line width=0.5mm](2,2) -- (0,2);
			\draw[dotted] [line width=0.5mm](2,2) -- (4,4);
			\draw[dotted] [line width=0.5mm](4,2) -- (4,0);
			\draw[dotted] [line width=0.5mm](4,0) -- (4,2);
			\draw[dotted] [line width=0.5mm](4,0) -- (6,0);
			\draw[dotted] [line width=0.5mm](6,0) -- (4,2);
			\draw[dotted] [line width=0.5mm](4,2) -- (6,2);
			\draw[dotted] [line width=0.5mm](4,2) -- (6,4);
			\draw[dotted] [line width=0.5mm](6,4) -- (6,2);
			\draw[dotted] [line width=0.5mm](2,2) -- (4,2);
			\draw[dotted] [line width=0.5mm](0,0) -- (2,2);
			\draw[dotted] [line width=0.5mm](2,4) -- (2,2);
			\draw[dotted] [line width=0.5mm](0,0) -- (0,2);
			\draw[dotted] [line width=0.5mm](4,4) -- (4,2);
			\draw (0,0) node[circle,fill,inner sep=2pt] {};
			\draw (2,0) node[circle,fill,inner sep=2pt] {};
			\draw (4,0) node[circle,fill,inner sep=2pt] {};
			\draw (0,2) node[circle,fill,inner sep=2pt] {};
			\draw (2,2) node[circle,fill,inner sep=2pt] {};
			\draw (4,2) node[circle,fill,inner sep=2pt] {};
			\draw (0,4) node[circle,fill,inner sep=2pt] {};
			\draw (2,4) node[circle,fill,inner sep=2pt] {};
			\draw (4,4) node[circle,fill,inner sep=2pt] {};
			\draw (2,2.7) node[circle,fill,inner sep=2pt] {};
			\draw (2.6,2.6) node[circle,fill,inner sep=2pt] {};
			\draw (2.7,2) node[circle,fill,inner sep=2pt] {};
			\draw (1.4,1.4) node[circle,fill,inner sep=2pt] {};
			\draw (2.5,1.5) node[circle,fill,inner sep=2pt] {};
			\draw (2,1.3) node[circle,fill,inner sep=2pt] {};
			\node[inner sep=0,anchor=west,text width=3.3cm] (note1) at (2.7,2.5) {$v_{2}$};
					\node[inner sep=0,anchor=west,text width=3.3cm] (note1) at (2.7,1.8) {$v_{1}$};
					\node[inner sep=0,anchor=west,text width=3.3cm] (note1) at (1.5,2.7) {$v_{11}$};
					\node[inner sep=0,anchor=west,text width=3.3cm] (note1) at (1.3,1.2) {$v_{22}$};
					\node[inner sep=0,anchor=west,text width=3.3cm] (note1) at (-0.5,4.3) {$v_{12}$};
					\node[inner sep=0,anchor=west,text width=3.3cm] (note1) at (-0.5,2) {$v_{21}$};
			\draw (2,2.7) -- (2.6,2.6);
			\draw (2.5,1.5) -- (2.7,2);
			\draw (2,1.3) -- (2.5,1.5);
			\draw (2,1.3) -- (1.4,1.4);
			\draw[thick,->] (2,2.7) -- (2,3);
			\draw[thick,->] (2.6,2.6) -- (2.3,2.3);
			\draw[thick,->] (2.7,2) -- (2.3,2);
			\draw[thick,->] (1.4,1.4) -- (1.7,1.7);
			\draw[thick,->] (5,4) -- (5,3.6);
			\draw[thick,->] (6,4) -- (5.7,3.7);
			\draw[thick,->] (6,3) -- (5.6,3);
			\draw[thick,->] (6,2) -- (5.6,2);
			\draw[thick,->] (6,1) -- (5.6,1);
			\draw[thick,->] (5,1) -- (4.7,0.7);
			\draw[thick,->] (6,0) -- (5.6,0);
			\draw[thick,->] (1,4) -- (1,3.6);
			\draw[thick,->] (2,1) -- (2.4,1);
			\draw[thick,->] (3,1) -- (3.3,1.3);
			\draw[thick,->] (0,0) -- (0,0.4);
			\draw[thick,->] (0,2) -- (0.4,2);
			\draw[thick,->] (0,4) -- (0.3,3.7);
			\draw[thick,->] (1,0) -- (1,0.4);
			\draw[thick,->] (1,1) -- (0.7,1.3);
			\draw[thick,->] (0,3) -- (0.4,3);
			\draw[thick,->] (2,0) -- (2,-0.4);
			\draw[thick,->] (2,2) -- (2,2.4);
			\draw[thick,->] (2,4) -- (1.7,3.7);
			\draw[thick,->] (3,0) -- (3,-0.4);
			\draw[thick,->] (4,0) -- (4,0.4);
			\draw[thick,->] (4,2) -- (3.6,2);
			\draw[thick,->] (3,3) -- (3.3,2.7);
			\draw[thick,->] (3,4) -- (3,3.6);
			\draw[thick,->] (4,4) -- (4,3.6);
			\end{tikzpicture}
		} %
		
		\caption{}
		\label{fig:wedge2}
	\end{figure}

	\begin{figure}[h]
		\centering
		\resizebox{0.42\textwidth}{!}{%
			\begin{tikzpicture}[scale=2]
			\draw (0,2)--(0,4);
			\draw (2,4)--(4,4);
			\draw (4,0)--(0,0);
			\draw (2,0) -- (2,2);
			\draw (2,2.7) -- (2,2);
			\draw (2,2) -- (2.7,2);
			\draw (6,0) -- (6,2);
			\draw (6,4) -- (4,4);
			\draw[fill=black!20] (1.4,1.4)--(2,1.3)--(2,0)--(0,0)--(1.4,1.4);
			\draw[fill=black!20] (2,2.7)--(2,4)--(4,4)--(2.6,2.6)--(2,2.7);
			\draw[fill=black!20] (0,2)--(0,4)--(2,2)--(0,2);
			\draw[fill=black!20] (2,0)--(2,1.3)--(2.5,1.5)--(4,0)--(2,0);
			\draw[fill=black!20] (2.5,1.5)--(4,0)--(4,2)--(2.7,2)--(2.5,1.5);
			\draw[fill=black!20] (4,2)--(6,0)--(6,2)--(4,2);
			\draw[fill=black!20] (4,2)--(6,4)--(4,4)--(4,2);
			\draw (0,0) -- (4,4);
			\node[inner sep=0,anchor=west,text width=3.3cm] (note1) at (5,2.7) {$d_{2}'$};
			\node[inner sep=0,anchor=west,text width=3.3cm] (note1) at (4.3,0.6) {$d_{1}'$};
			\node[inner sep=0,anchor=west,text width=3.3cm] (note1) at (0.3,1.5) {$d_{2}$};
			\node[inner sep=0,anchor=west,text width=3.3cm] (note1) at (1.2,3.3) {$d_{1}$};
			\node[inner sep=0,anchor=west,text width=3.3cm] (note1) at (3.35,2.7) {$d_{0}$};
			\node[inner sep=0,anchor=west,text width=3.3cm] (note1) at (2,1.8) {$v$};
			\node[inner sep=0,anchor=west,text width=3.3cm] (note1) at (3.65,2.2) {$v'$};
			\draw[dotted] [line width=0.5mm](2,2) -- (0,4);
			\draw[dotted] [line width=0.5mm](0,4) -- (2,4);
			\draw[dotted] [line width=0.5mm](2,2) -- (0,2);
			\draw[dotted] [line width=0.5mm](2.6,2.6) -- (4,4);
			\draw[dotted] [line width=0.5mm](4,2) -- (4,0);
			\draw[dotted] [line width=0.5mm](4,0) -- (4,2);
			\draw[dotted] [line width=0.5mm](4,0) -- (6,0);
			\draw[dotted] [line width=0.5mm](6,0) -- (4,2);
			\draw[dotted] [line width=0.5mm](4,2) -- (6,2);
			\draw[dotted] [line width=0.5mm](4,2) -- (6,4);
			\draw[dotted] [line width=0.5mm](6,4) -- (6,2);
			\draw[dotted] [line width=0.5mm](2.7,2) -- (4,2);
			\draw[dotted] [line width=0.5mm](0,0) -- (1.4,1.4);
			\draw[dotted] [line width=0.5mm](2,4) -- (2,2.7);
			\draw[dotted] [line width=0.5mm](0,0) -- (0,2);
			\draw[dotted] [line width=0.5mm](4,4) -- (4,2);
			\draw[dotted] [line width=0.5mm](1.4,1.4) -- (2,1.3);
			\draw[dotted] [line width=0.5mm](2,1.3) -- (2.5,1.5);
			\draw[dotted] [line width=0.5mm](2.5,1.5) -- (2.7,2);
			\draw[dotted] [line width=0.5mm](2.6,2.6) -- (2,2.7);
			\draw (2,2.7) -- (2.6,2.6);
			\draw (2.5,1.5) -- (2.7,2);
			\draw (2,1.3) -- (2.5,1.5);
			\draw (2,1.3) -- (1.4,1.4);
			\draw[dotted] [line width=0.5mm](1.4,1.4) -- (2,1.3);
			\draw[dotted] [line width=0.5mm](2,1.3) -- (2.5,1.5);
			\draw[dotted] [line width=0.5mm](2.5,1.5) -- (2.7,2);
			\draw[dotted] [line width=0.5mm](2.6,2.6) -- (2,2.7);
			\draw[thick,dotted,->] (2,1.3) -- (2,1.7);
			\draw[thick,dotted,->] (2.5,1.5) -- (2.2,1.8);
			\draw[thick,dotted,->] (1.7,1.35) -- (1.8,1.7);
			\draw[thick,dotted,->] (2.25,1.4) -- (2.15,1.7);
			\draw[thick,dotted,->] (2.6,1.7) -- (2.35,1.9);
			\draw[thick,dotted,->] (2.3,2.65) -- (2.13,2.33);
			\draw (0,0) node[circle,fill,inner sep=2pt] {};
			\draw (2,0) node[circle,fill,inner sep=2pt] {};
			\draw (4,0) node[circle,fill,inner sep=2pt] {};
			\draw (0,2) node[circle,fill,inner sep=2pt] {};
			\draw (2,2) node[circle,fill,inner sep=2pt] {};
			\draw (4,2) node[circle,fill,inner sep=2pt] {};
			\draw (0,4) node[circle,fill,inner sep=2pt] {};
			\draw (2,4) node[circle,fill,inner sep=2pt] {};
			\draw (4,4) node[circle,fill,inner sep=2pt] {};
			\draw (2,2.7) node[circle,fill,inner sep=2pt] {};
			\draw (2.6,2.6) node[circle,fill,inner sep=2pt] {};
			\draw (2.7,2) node[circle,fill,inner sep=2pt] {};
			\draw (1.4,1.4) node[circle,fill,inner sep=2pt] {};
			\draw (2.5,1.5) node[circle,fill,inner sep=2pt] {};
			\draw (2,1.3) node[circle,fill,inner sep=2pt] {};
			\node[inner sep=0,anchor=west,text width=3.3cm] (note1) at (2.7,2.5) {$v_{2}$};
					\node[inner sep=0,anchor=west,text width=3.3cm] (note1) at (2.7,1.8) {$v_{1}$};
					\node[inner sep=0,anchor=west,text width=3.3cm] (note1) at (1.5,2.7) {$v_{11}$};
					\node[inner sep=0,anchor=west,text width=3.3cm] (note1) at (1.3,1.2) {$v_{22}$};
					\node[inner sep=0,anchor=west,text width=3.3cm] (note1) at (-0.5,4.3) {$v_{12}$};
					\node[inner sep=0,anchor=west,text width=3.3cm] (note1) at (-0.5,2) {$v_{21}$};
			\draw[thick,->] (2,2.7) -- (2,3);
			\draw[thick,->] (2.6,2.6) -- (2.3,2.3);
			\draw[thick,->] (2.7,2) -- (2.3,2);
			\draw[thick,->] (1.4,1.4) -- (1.7,1.7);
			\draw[thick,->] (5,4) -- (5,3.6);
			\draw[thick,->] (6,4) -- (5.7,3.7);
			\draw[thick,->] (6,3) -- (5.6,3);
			\draw[thick,->] (6,2) -- (5.6,2);
			\draw[thick,->] (6,1) -- (5.6,1);
			\draw[thick,->] (5,1) -- (4.7,0.7);
			\draw[thick,->] (6,0) -- (5.6,0);
			\draw[thick,->] (1,4) -- (1,3.6);
			\draw[thick,->] (2,1) -- (2.4,1);
			\draw[thick,->] (3,1) -- (3.3,1.3);
			\draw[thick,->] (0,0) -- (0,0.4);
			\draw[thick,->] (0,2) -- (0.4,2);
			\draw[thick,->] (0,4) -- (0.3,3.7);
			\draw[thick,->] (1,0) -- (1,0.4);
			\draw[thick,->] (1,1) -- (0.7,1.3);
			\draw[thick,->] (0,3) -- (0.4,3);
			\draw[thick,->] (2,0) -- (2,-0.4);
			\draw[thick,->] (2,2) -- (2,2.4);
			\draw[thick,->] (2,4) -- (1.7,3.7);
			\draw[thick,->] (3,0) -- (3,-0.4);
			\draw[thick,->] (4,0) -- (4,0.4);
			\draw[thick,->] (4,2) -- (3.6,2);
			\draw[thick,->] (3,3) -- (3.3,2.7);
			\draw[thick,->] (3,4) -- (3,3.6);
			\draw[thick,->] (4,4) -- (4,3.6);
			\end{tikzpicture}
		} %
		\caption{}
		\label{fig:unwedge2}
	\end{figure}
	
\end{proof}

\begin{remark}
	Note that in Theorem \ref{decompose}, we need to do subdivisions to obtain the boundary as a manifold.  Hence the boundary of the $2$-manifolds should be a circle after the subdivisions since the boundary will be closed $1$-dimensional manifold.
\end{remark}

The following is an immediate corollary of Theorems~\ref{boundary} and \ref{decompose}.
\begin{corollary}
	Let $M=M_{1}\# M_{2}$ be the connected sum of two closed oriented triangulated surfaces of genus $g_{1}$ and $g_{2}$ respectively and $f$ be a perfect discrete Morse function defined on $M$.  Then we can extend $f_{|_{M-M_{2}}}$  to $M_{1}$ and 
	$f_{|_{M-M_{1}}}$  to $M_{2}$ as perfect discrete Morse functions.
\end{corollary}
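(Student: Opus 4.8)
The plan is to read the corollary off Theorems~\ref{boundary} and \ref{decompose}, the point being that the separating circle manufactured in the proof of Theorem~\ref{decompose} is exactly the kind of curve that Theorem~\ref{boundary} requires as input; once that match is verified, the extensions and the perfect discrete Morse functions inducing them come for free from Theorem~\ref{boundary}. Before quoting those theorems I would first carry out the harmless reductions that are already in force in the paragraph preceding Theorem~\ref{decompose}: after possibly interchanging the names of the two summands we may assume the unique critical $0$-cell of $f$ lies in $M-M_{2}$, and, if the unique critical $2$-cell does not already lie in $M-M_{1}$, we slide it there along a $2$-path as in Lemma~\ref{n-paths} without changing the number of critical cells in any dimension; finally, after a subdivision of $M$ (which affects neither the homeomorphism type of the summands nor the perfectness of a discrete Morse function) we may assume the star of every critical cell of $f$ contains no other critical cell. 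Write $V$ for the gradient field of $f$.

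Next I would check that the standing hypotheses of Theorem~\ref{boundary} hold for the decomposition produced by Theorem~\ref{decompose}. Since $f$ is perfect there are exactly $2(g_{1}+g_{2})$ critical $1$-cells; tracing the $1$-paths down to the critical $0$-cell and the $2$-paths up to the critical $2$-cell (Lemmas~\ref{1-paths} and \ref{n-paths}) shows that, for the circle $C$ built in the proof of Theorem~\ref{decompose} as the boundary of the core of $M-M_{1}$, the critical $2$-cell and exactly $2g_{2}$ of the critical $1$-cells lie in $M-M_{1}$, while the critical $0$-cell and the remaining $2g_{1}$ critical $1$-cells lie in $M-M_{2}$; thus $V|_{M-M_{2}}$ has one critical $0$-cell and $2g_{1}$ critical $1$-cells and $V|_{M-M_{1}}$ has one critical $2$-cell and $2g_{2}$ critical $1$-cells, which is the setup of Theorem~\ref{boundary}. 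Moreover Theorem~\ref{decompose} guarantees that every cell of $C$ is paired with a cell of $C$ or with a cell of $M-M_{2}$, which is precisely the assertion that no arrow on a vertex or edge of $C$ points into $M-M_{1}$, i.e.\ the extra hypothesis of Theorem~\ref{boundary}. Applying Theorem~\ref{boundary} now yields perfect gradient vector fields on $M_{1}$ and $M_{2}$ extending $V|_{M-M_{2}}$ and $V|_{M-M_{1}}$, together with perfect discrete Morse functions inducing them that agree with $f$ on $M-M_{1}$ and on $M-M_{2}$; in particular these functions restrict to $f|_{M-M_{2}}$ on $M-M_{2}\subset M_{1}$ and to $f|_{M-M_{1}}$ on $M-M_{1}\subset M_{2}$, which is exactly the statement of the corollary.

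The part that needs the most care is not a geometric difficulty but a matter of bookkeeping: one must make sure the subdivisions introduced inside the proof of Theorem~\ref{decompose} (bisections near the tunnels to make $C$ a circle) are innocuous here, namely that they refine only cells in a neighbourhood of $C$, that $f$ is carried along them by the canonical extension of a discrete Morse function across a bisected cell, and that they create no new critical cells, so that the phrase ``extend $f|_{M-M_{2}}$'' retains its literal meaning after the subdivision. Everything else is a direct invocation of the two theorems, which is why the corollary is immediate.
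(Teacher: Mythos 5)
Your proposal is correct and follows essentially the same route as the paper, which states the corollary as an immediate consequence of Theorems~\ref{boundary} and \ref{decompose}: Theorem~\ref{decompose} supplies a separating circle $C$ whose cells are paired only along $C$ or into $M-M_{2}$, which is exactly the hypothesis of Theorem~\ref{boundary}, and that theorem then yields the perfect extensions. Your additional bookkeeping about the preliminary reductions and the innocuousness of the subdivisions is consistent with the standing assumptions the paper puts in place just before Theorem~\ref{decompose}.
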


To clear up the process described in Theorem~\ref{decompose}, we work it out in the following example.

\begin{example} \label{genus-2}
	Let $M$ be the triangulated genus $2$ orientable surface with a perfect discrete Morse function which induces the gradient vector field depicted in 
	Figure \ref{fig:torus}.

	\begin{figure}[h]
		\centering
		\resizebox{0.5\textwidth}{!}{%
			\begin{tikzpicture}
			\draw (0,0)  -- (0,3.6) -- (3,6)-- (6.6,6)-- (9,3.6)-- (9,0)-- (6.6,-3)-- (3,-3)-- (0,0);
			\draw (5.4,-2.5) node[circle,fill,color=red,inner sep=1pt] {};
			\draw (1.87,5.1)  -- (4.2,6);
			\draw (1.87,5.1)  -- (4.2,5);
			\draw (4.2,6)  -- (4.2,5);
			\draw (4.2,5)  -- (5.4,6);
			\draw (4.2,5)  -- (5.4,5);
			\draw (5.4,5)  -- (6.6,6);
			\draw (5.4,5)  -- (5.4,6);
			\draw (5.4,5)  -- (7.5,5.1);
			\draw (0.87,4.3)  -- (4.2,5);
			\draw (0.87,4.3)  -- (4.2,4);
			\draw (4.2,4)  -- (4.2,5);
			\draw (4.2,4)  -- (5.4,5);
			\draw (4.2,4)  -- (5.4,4);
			\draw (5.4,4)  -- (5.4,5);
			\draw (5.4,4)  -- (7.5,5.1);
			\draw (5.4,4)  -- (8.1,4.5);
			\draw (5.4,4)  -- (9,3.6);
			\draw (5.4,4)  -- (6,3);
			\draw (5.4,4)  -- (5.2,1);
			\draw (5.4,4)  -- (4.7,2);
			\draw (5.4,4)  -- (2.5,1);
			\draw (5.4,4)  -- (3,2);
			\draw (3,2)  -- (4.2,4);
			\draw (4.2,4)  -- (0,1.2);
			\draw (4.2,4)  -- (0,2.4);
			\draw (4.2,4)  -- (0,3.6);
			\draw (0,1.2)  -- (3,2);
			\draw (0,0)  -- (3,2);
			\draw (3,2)  -- (2.5,1);
			\draw (2.5,1)  -- (4.7,2);
			\draw (4.7,2)  -- (5.2,1);
			\draw (5.2,1)  -- (6,3);
			\draw (6,3)  -- (9,3.6);
			\draw (6,3)  -- (6.6,1.5);
			\draw (6.6,1.5)  -- (9,3.6);
			\draw (5.2,1)  -- (6.6,1.5);
			\draw (4,0.5)  -- (5.2,1);
			\draw (4,0.5)  -- (4.7,2);
			\draw (4,0.5)  -- (2.5,1);
			\draw (2.5,1)  -- (0,0);
			\draw (4,0.5)  -- (0,0);
			\draw (6.6,1.5)  -- (7,1);
			\draw (7,1)  -- (9,3.6);
			\draw (7,1)  -- (9,2.4);
			\draw (5,-1.5)  -- (7,1);
			\draw (5,-1.5)  -- (9,2.4);
			\draw (5,-1.5)  -- (6.6,1.5);
			\draw (5,-1.5)  -- (5.2,1);
			\draw (5,-1.5)  -- (4,0.5);
			\draw (5,-1.5)  -- (0,0);
			\draw (5,-1.5)  -- (9,1.2);
			\draw (5,-1.5)  -- (6.5,-1.5);
			\draw (5,-1.5)  -- (4.2,-2);
			\draw (5,-1.5)  -- (1,-1);
			\draw (9,1.2)  -- (6.5,-1.5);
			\draw (9,0)  -- (6.5,-1.5);
			\draw (8.19,-1)  -- (6.5,-1.5);
			\draw (5.4,-2.5)  -- (6.5,-1.5);
			\draw (4.2,-2)  -- (6.5,-1.5);
			\draw (4.2,-2)  -- (5.4,-2.5);
			\draw (8.19,-1)  -- (5.4,-2.5);
			\draw (7.39,-2)  -- (5.4,-2.5);
			\draw (5.4,-3)  -- (5.4,-2.5);
			\draw (4.2,-3)  -- (5.4,-2.5);
			\draw (5.4,-3)  -- (7.39,-2);
			\draw (4.2,-3)  -- (4.2,-2);
			\draw (3,-3)  -- (4.2,-2);
			\draw (2,-2)  -- (4.2,-2);
			\draw (1,-1)  -- (4.2,-2);
			\draw (1,-1)  -- (5,-1.5);
			\draw [line width=0.3mm, color=red](0,1.2) -- (4.2,4);
			\draw [line width=0.3mm, color=blue](5.4,-3) -- (6.6,-3);
			\draw [line width=0.3mm, color=red](4.2,4) -- (5.4,5);
			\draw [line width=0.3mm, color=blue](4.2,-3) -- (5.4,-2.5);
			\draw (5.4,-2.5) node[circle,fill,color=black,inner sep=2pt] {};
			\node[inner sep=0,anchor=west,text width=3.3cm] (note1) at (5.3,-2.3) {$w$};
			\draw[fill=black] (5.4,4)--(8.1,4.5)--(7.5,5.1)--(5.4,4);
			\draw[thick,->] (2.5,5.6) -- (2.8,5.6);
			\draw[thick,->] (3.17,5.6) -- (3.37,5.37);
			\draw[thick,->] (4.2,5.5) -- (4.5,5.5);
			\draw[thick,->] (4.2,5) -- (4.459,5.23);
			\draw[thick,->] (4.8,5) -- (4.8,5.3);
			\draw[thick,->] (5.4,5.5) -- (5.7,5.5);
			\draw[thick,->] (5.4,5) -- (5.659,5.23);
			\draw[thick,->] (6.45,5.05) -- (6.45,5.35);
			\draw[thick,->] (3.2,5.045) -- (2.9,4.9);
			\draw[thick,->] (2.7,4.68) -- (2.8,4.38);
			\draw[thick,->] (2.69,4.138) -- (2.33,4.03);
			\draw[thick,->] (4.2,4) -- (3.7,3.95);
			\draw[thick,->] (0,3) -- (0.3,3);
			\draw[thick,->] (1.9,3.125) -- (2.2,2.99);
			\draw[thick,->] (1.9,1.7) -- (2.1,2.05);
			\draw[thick,->] (1.5,1) -- (1.3,1.2);
			\draw[thick,->] (1.9,0.75) -- (1.7,1);
			\draw[thick,->] (2.35,0.295) -- (2.35,0.599);
			\draw[thick,->] (3,-0.9) -- (3.05,-0.5);
			\draw[thick,->] (2.2,-1.15) -- (2.2,-0.8);
			\draw[thick,->] (2.6,-1.5) -- (2.4,-1.8);
			\draw[thick,->] (3.05,-2) -- (3.05,-2.3);
			\draw[thick,->] (3.6,-2.5)-- (3.8,-2.75);
			\draw[thick,->] (4.2,-2.5) -- (4.5,-2.5);
			\draw[thick,->] (4.2,-3) -- (4.5,-3);
			\draw[thick,->] (5.4,-2.75) -- (5.1,-2.75);
			\draw[thick,->] (6.93,-2.6) -- (6.6,-2.6);
			\draw[thick,->] (6.3,-2.55) -- (6,-2.5);
			\draw[thick,->] (6.5,-2.22) -- (6.7,-1.95);
			\draw[thick,->] (6.7,-1.8) -- (6.5,-1.6);
			\draw[thick,->] (7.4,-1.23) -- (7.6,-1);
			\draw[thick,->] (8,-0.6) -- (8.1,-0.3);
			\draw[thick,->] (9,1.2) -- (8.7,0.88);
			\draw[thick,->] (6.6,-0.42) -- (6.6,-0.72);
			\draw[thick,->] (7,0.45) -- (6.9,0.1);
			\draw[thick,->] (7.8,1.56) -- (7.65,1.25);
			\draw[thick,->] (7,1) -- (6.77,0.7);
			\draw[thick,->] (6.85,1.2) -- (6.55,0.9);
			\draw[thick,->] (8,2.3) -- (7.9,1.9);
			\draw[thick,->] (7.8,2.555) -- (7.7,2.2);
			\draw[thick,->] (7.5,3.3) -- (7.4,3);
			\draw[thick,->] (7.3,3.8) -- (7.2,3.5);
			\draw[thick,->] (7.4,4.38) -- (7.3,4.08);
			\draw[thick,->] (6.4,4.53) -- (6.3,4.8);
			\draw[thick,->] (5.4,4) -- (5.4,4.35);
			\draw[thick,->] (4.8,4) -- (4.8,4.35);
			\draw[thick,->] (4.2,4.5) -- (4.55,4.5);
			\draw[thick,->] (3.6,3) -- (3.95,3.1);
			\draw[thick,->] (3,2) -- (3.3,2.25);
			\draw[thick,->] (2.75,1.5) -- (3.1,1.8);
			\draw[thick,->] (2.5,1) -- (2.83,1.34);
			\draw[thick,->] (3.1,0.8) -- (3.4,1);
			\draw[thick,->] (3.6,1.5) -- (3.8,1.79);
			\draw[thick,->] (4.7,2) -- (4.82,2.32);
			\draw[thick,->] (4,0.5) -- (4.32,0.64);
			\draw[thick,->] (4.33,1.2) -- (4.6,1.3);
			\draw[thick,->] (4.9,1.6) -- (5,1.9);
			\draw[thick,->] (5.2,1) -- (5.5,1.11);
			\draw[thick,->] (5.6,2.02) -- (5.9,1.9);
			\draw[thick,->] (5.3,2.5) -- (5.55,2.7);
			\draw[thick,->] (6.6,1.5) -- (6.48,1.8);
			\draw[thick,->] (6,3) -- (5.82,3.3);
			\draw[thick,->] (5.9,0.2) -- (5.6,0.3);
			\draw[thick,->] (5.13,0.05) -- (4.85,0.2);
			\draw[thick,->] (5,-1.5) -- (4.815,-1.13);
			\draw[thick,->] (6.5,-1.5) -- (6.23,-1.75);
			\draw[thick,->] (5.4,-1.74) -- (5.4,-2.05);
			\draw[thick,->] (5.75,-1.503) -- (5.42,-1.65);
			\draw[thick,->] (4.61,-1.75) -- (4.3,-1.65);
			\draw[thick,->] (2,-2) -- (1.75,-1.75);
			\draw[thick,->] (1,-1) -- (0.75,-0.75);
			\draw[thick,->] (4.2,6) -- (4.55,6);
			\draw[thick,->] (5.4,6) -- (5.75,6);
			\draw[thick,->] (7.5,5.1) -- (7.75,4.85);
			\draw[thick,->] (8.1,4.5) -- (8.35,4.25);
			\draw[thick,->] (9,3.6) -- (9,3.25);
			\draw[thick,->] (9,2.4) -- (9,2.05);
			\draw[thick,->] (8.19,-1) -- (8.41,-0.75);
			\draw[thick,->] (7.39,-2) -- (7.59,-1.75);
			\draw[thick,->] (4.2,-2) -- (4.5,-2.12);
			\node[inner sep=0,anchor=west,text width=3.3cm] (note1) at (-0.3,0) {$3$};
			\node[inner sep=0,anchor=west,text width=3.3cm] (note1) at (-0.3,3.6) {$3$}; 
			\node[inner sep=0,anchor=west,text width=3.3cm] (note1) at (2.7,6.1) {$3$};
			\node[inner sep=0,anchor=west,text width=3.3cm] (note1) at (6.8,6.1) {$3$};
			\node[inner sep=0,anchor=west,text width=3.3cm] (note1) at (9.2,3.6) {$3$};
			\node[inner sep=0,anchor=west,text width=3.3cm] (note1) at (9.2,0) {$3$};
			\node[inner sep=0,anchor=west,text width=3.3cm] (note1) at (6.8,-3.1) {$3$};
			\node[inner sep=0,anchor=west,text width=3.3cm] (note1) at (2.7,-3.1) {$3$};
			\node[inner sep=0,anchor=west,text width=3.3cm] (note1) at (0.47,4.3) {$1$};
			\node[inner sep=0,anchor=west,text width=3.3cm] (note1) at (1.47,5.1) {$2$};
			\node[inner sep=0,anchor=west,text width=3.3cm] (note1) at (4.2,6.2) {$4$};
			\node[inner sep=0,anchor=west,text width=3.3cm] (note1) at (5.4,6.2) {$5$};
			\node[inner sep=0,anchor=west,text width=3.3cm] (note1) at (7.7,5.1) {$2$};
			\node[inner sep=0,anchor=west,text width=3.3cm] (note1) at (8.3,4.5) {$1$};
			\node[inner sep=0,anchor=west,text width=3.3cm] (note1) at (9.2,2.4) {$6$};
			\node[inner sep=0,anchor=west,text width=3.3cm] (note1) at (9.2,1.2) {$7$};
			\node[inner sep=0,anchor=west,text width=3.3cm] (note1) at (8.39,-1) {$8$};
			\node[inner sep=0,anchor=west,text width=3.3cm] (note1) at (7.59,-2) {$9$};
			\node[inner sep=0,anchor=west,text width=3.3cm] (note1) at (5.4,-3.2) {$7$};
			\node[inner sep=0,anchor=west,text width=3.3cm] (note1) at (4.2,-3.2) {$6$};
			\node[inner sep=0,anchor=west,text width=3.3cm] (note1) at (1.6,-2) {$9$};
			\node[inner sep=0,anchor=west,text width=3.3cm] (note1) at (0.6,-1) {$8$};
			\node[inner sep=0,anchor=west,text width=3.3cm] (note1) at (-0.3,1.2) {$5$};
			\node[inner sep=0,anchor=west,text width=3.3cm] (note1) at (-0.3,2.4) {$4$};
			\end{tikzpicture}
		} %
		\caption{A gradient vector field on genus $2$ orientable surface.}
		\label{fig:torus} 
	\end{figure}
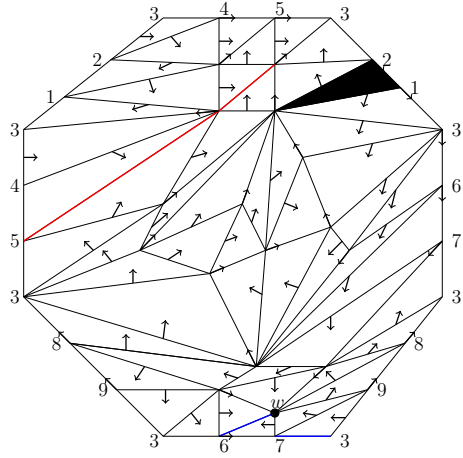
	
	The black triangle is the critical $2$-cell and the vertex $w$ is the critical $0$-cell.  The red and blue
	edges are the critical $1$-cells.  As in the proof of Theorem \ref{decompose} 
	we select two critical $1$-cells that belong to $M-M_1$.  Remember that discrete flow points the direction in which $f$ is non-increasing, so these should be the two critical $1$-cells on which $f$ attains higher values. In our example only the discrete vector field is given and the perfect discrete Morse function is not specified, we may choose any two, and this choice will determine the decomposion of the surface into two tori. 
	
	Because of our construction (see also the proof of Theorem~\ref{compose}) we do not want to see any critical cells on the resulting boundary.  If a 
	$2$-path from the boundary of the critical $2$-cell to a critical $1$-cell passes through other critical cells, then we need to perturb the vector field by doing subdivision(bisection or barycentric subdivision) in the star of the critical cell.  Since we want to 
	change the given function as little as possible, whenever we can, we choose critical $1$-cells so that no such perturbation is necessary.  
	
	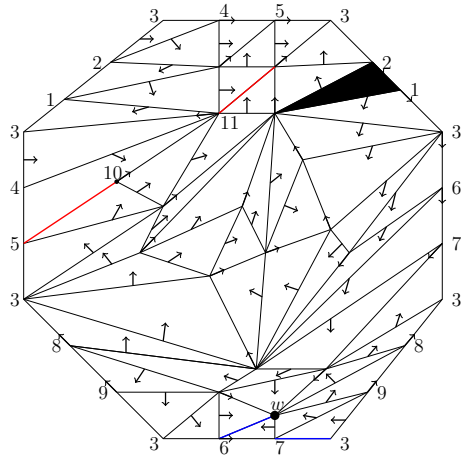
\begin{figure}[h]
		\centering
		\resizebox{0.5\textwidth}{!}{%
			\begin{tikzpicture}
			\draw (0,0)  -- (0,3.6) -- (3,6)-- (6.6,6)-- (9,3.6)-- (9,0)-- (6.6,-3)-- (3,-3)-- (0,0);
			\draw (3,2)  -- (2,2.53);
			\draw (1.87,5.1)  -- (4.2,6);
			\draw (1.87,5.1)  -- (4.2,5);
			\draw (4.2,6)  -- (4.2,5);
			\draw (4.2,5)  -- (5.4,6);
			\draw (4.2,5)  -- (5.4,5);
			\draw (5.4,5)  -- (6.6,6);
			\draw (5.4,5)  -- (5.4,6);
			\draw (5.4,5)  -- (7.5,5.1);
			\draw (0.87,4.3)  -- (4.2,5);
			\draw (0.87,4.3)  -- (4.2,4);
			\draw (4.2,4)  -- (4.2,5);
			\draw (4.2,4)  -- (5.4,5);
			\draw (4.2,4)  -- (5.4,4);
			\draw (5.4,4)  -- (5.4,5);
			\draw (5.4,4)  -- (7.5,5.1);
			\draw (5.4,4)  -- (8.1,4.5);
			\draw (5.4,4)  -- (9,3.6);
			\draw (5.4,4)  -- (6,3);
			\draw (5.4,4)  -- (5.2,1);
			\draw (5.4,4)  -- (4.7,2);
			\draw (5.4,4)  -- (2.5,1);
			\draw (5.4,4)  -- (3,2);
			\draw (3,2)  -- (4.2,4);
			\draw (4.2,4)  -- (2,2.53);
			\draw (4.2,4)  -- (0,2.4);
			\draw (4.2,4)  -- (0,3.6);
			\draw (0,1.2)  -- (3,2);
			\draw (0,0)  -- (3,2);
			\draw (3,2)  -- (2.5,1);
			\draw (2.5,1)  -- (4.7,2);
			\draw (4.7,2)  -- (5.2,1);
			\draw (5.2,1)  -- (6,3);
			\draw (6,3)  -- (9,3.6);
			\draw (6,3)  -- (6.6,1.5);
			\draw (6.6,1.5)  -- (9,3.6);
			\draw (5.2,1)  -- (6.6,1.5);
			\draw (4,0.5)  -- (5.2,1);
			\draw (4,0.5)  -- (4.7,2);
			\draw (4,0.5)  -- (2.5,1);
			\draw (2.5,1)  -- (0,0);
			\draw (4,0.5)  -- (0,0);
			\draw (6.6,1.5)  -- (7,1);
			\draw (7,1)  -- (9,3.6);
			\draw (7,1)  -- (9,2.4);
			\draw (5,-1.5)  -- (7,1);
			\draw (5,-1.5)  -- (9,2.4);
			\draw (5,-1.5)  -- (6.6,1.5);
			\draw (5,-1.5)  -- (5.2,1);
			\draw (5,-1.5)  -- (4,0.5);
			\draw (5,-1.5)  -- (0,0);
			\draw (5,-1.5)  -- (9,1.2);
			\draw (5,-1.5)  -- (6.5,-1.5);
			\draw (5,-1.5)  -- (4.2,-2);
			\draw (5,-1.5)  -- (1,-1);
			\draw (9,1.2)  -- (6.5,-1.5);
			\draw (9,0)  -- (6.5,-1.5);
			\draw (8.19,-1)  -- (6.5,-1.5);
			\draw (5.4,-2.5)  -- (6.5,-1.5);
			\draw (4.2,-2)  -- (6.5,-1.5);
			\draw (4.2,-2)  -- (5.4,-2.5);
			\draw (8.19,-1)  -- (5.4,-2.5);
			\draw (7.39,-2)  -- (5.4,-2.5);
			\draw (5.4,-3)  -- (5.4,-2.5);
			\draw (4.2,-3)  -- (5.4,-2.5);
			\draw (5.4,-3)  -- (7.39,-2);
			\draw (4.2,-3)  -- (4.2,-2);
			\draw (3,-3)  -- (4.2,-2);
			\draw (2,-2)  -- (4.2,-2);
			\draw (1,-1)  -- (4.2,-2);
			\draw (1,-1)  -- (5,-1.5);
			\draw [line width=0.3mm, color=red](0,1.2) -- (2,2.53);
			\draw [line width=0.3mm, color=blue](5.4,-3) -- (6.6,-3);
			\draw [line width=0.3mm, color=red](4.2,4) -- (5.4,5);
			\draw [line width=0.3mm, color=blue](4.2,-3) -- (5.4,-2.5);
			\draw (2,2.53) node[circle,fill,inner sep=1pt] {};
			\draw (5.4,-2.5) node[circle,fill,color=black,inner sep=2pt] {};
			\node[inner sep=0,anchor=west,text width=3.3cm] (note1) at (5.3,-2.3) {$w$};
			\draw[fill=black] (5.4,4)--(8.1,4.5)--(7.5,5.1)--(5.4,4);
			\draw[thick,->] (2.5,5.6) -- (2.8,5.6);
			\draw[thick,->] (3.17,5.6) -- (3.37,5.37);
			\draw[thick,->] (4.2,5.5) -- (4.5,5.5);
			\draw[thick,->] (4.2,5) -- (4.459,5.23);
			\draw[thick,->] (4.8,5) -- (4.8,5.3);
			\draw[thick,->] (5.4,5.5) -- (5.7,5.5);
			\draw[thick,->] (5.4,5) -- (5.659,5.23);
			\draw[thick,->] (6.45,5.05) -- (6.45,5.35);
			\draw[thick,->] (3.2,5.045) -- (2.9,4.9);
			\draw[thick,->] (2.7,4.68) -- (2.8,4.38);
			\draw[thick,->] (2.69,4.138) -- (2.33,4.03);
			\draw[thick,->] (4.2,4) -- (3.7,3.95);
			\draw[thick,->] (0,3) -- (0.3,3);
			\draw[thick,->] (1.9,3.125) -- (2.2,2.99);
			\draw[thick,->] (1.9,1.7) -- (2.1,2.05);
			\draw[thick,->] (1.5,1) -- (1.3,1.2);
			\draw[thick,->] (1.9,0.75) -- (1.7,1);
			\draw[thick,->] (2.35,0.295) -- (2.35,0.599);
			\draw[thick,->] (3,-0.9) -- (3.05,-0.5);
			\draw[thick,->] (2.2,-1.15) -- (2.2,-0.8);
			\draw[thick,->] (2.6,-1.5) -- (2.4,-1.8);
			\draw[thick,->] (3.05,-2) -- (3.05,-2.3);
			\draw[thick,->] (3.6,-2.5)-- (3.8,-2.75);
			\draw[thick,->] (4.2,-2.5) -- (4.5,-2.5);
			\draw[thick,->] (4.2,-3) -- (4.5,-3);
			\draw[thick,->] (5.4,-2.75) -- (5.1,-2.75);
			\draw[thick,->] (6.93,-2.6) -- (6.6,-2.6);
			\draw[thick,->] (6.3,-2.55) -- (6,-2.5);
			\draw[thick,->] (6.5,-2.22) -- (6.7,-1.95);
			\draw[thick,->] (6.7,-1.8) -- (6.5,-1.6);
			\draw[thick,->] (7.4,-1.23) -- (7.6,-1);
			\draw[thick,->] (8,-0.6) -- (8.1,-0.3);
			\draw[thick,->] (9,1.2) -- (8.7,0.88);
			\draw[thick,->] (6.6,-0.42) -- (6.6,-0.72);
			\draw[thick,->] (7,0.45) -- (6.9,0.1);
			\draw[thick,->] (7.8,1.56) -- (7.65,1.25);
			\draw[thick,->] (7,1) -- (6.77,0.7);
			\draw[thick,->] (6.85,1.2) -- (6.55,0.9);
			\draw[thick,->] (8,2.3) -- (7.9,1.9);
			\draw[thick,->] (7.8,2.555) -- (7.7,2.2);
			\draw[thick,->] (7.5,3.3) -- (7.4,3);
			\draw[thick,->] (7.3,3.8) -- (7.2,3.5);
			\draw[thick,->] (7.4,4.38) -- (7.3,4.08);
			\draw[thick,->] (6.4,4.53) -- (6.3,4.8);
			\draw[thick,->] (5.4,4) -- (5.4,4.35);
			\draw[thick,->] (4.8,4) -- (4.8,4.35);
			\draw[thick,->] (4.2,4.5) -- (4.55,4.5);
			\draw[thick,->] (3.6,3) -- (3.95,3.1);
			\draw[thick,->] (3,2) -- (3.3,2.25);
			\draw[thick,->] (2.75,1.5) -- (3.1,1.8);
			\draw[thick,->] (2.5,1) -- (2.83,1.34);
			\draw[thick,->] (3.1,0.8) -- (3.4,1);
			\draw[thick,->] (3.6,1.5) -- (3.8,1.79);
			\draw[thick,->] (4.7,2) -- (4.82,2.32);
			\draw[thick,->] (4,0.5) -- (4.32,0.64);
			\draw[thick,->] (4.33,1.2) -- (4.6,1.3);
			\draw[thick,->] (4.9,1.6) -- (5,1.9);
			\draw[thick,->] (5.2,1) -- (5.5,1.11);
			\draw[thick,->] (5.6,2.02) -- (5.9,1.9);
			\draw[thick,->] (5.3,2.5) -- (5.55,2.7);
			\draw[thick,->] (6.6,1.5) -- (6.48,1.8);
			\draw[thick,->] (6,3) -- (5.82,3.3);
			\draw[thick,->] (5.9,0.2) -- (5.6,0.3);
			\draw[thick,->] (5.13,0.05) -- (4.85,0.2);
			\draw[thick,->] (5,-1.5) -- (4.815,-1.13);
			\draw[thick,->] (6.5,-1.5) -- (6.23,-1.75);
			\draw[thick,->] (5.4,-1.74) -- (5.4,-2.05);
			\draw[thick,->] (5.75,-1.503) -- (5.42,-1.65);
			\draw[thick,->] (4.61,-1.75) -- (4.3,-1.65);
			\draw[thick,->] (2,-2) -- (1.75,-1.75);
			\draw[thick,->] (1,-1) -- (0.75,-0.75);
			\draw[thick,->] (4.2,6) -- (4.55,6);
			\draw[thick,->] (5.4,6) -- (5.75,6);
			\draw[thick,->] (7.5,5.1) -- (7.75,4.85);
			\draw[thick,->] (8.1,4.5) -- (8.35,4.25);
			\draw[thick,->] (9,3.6) -- (9,3.25);
			\draw[thick,->] (9,2.4) -- (9,2.05);
			\draw[thick,->] (8.19,-1) -- (8.41,-0.75);
			\draw[thick,->] (7.39,-2) -- (7.59,-1.75);
			\draw[thick,->] (4.2,-2) -- (4.5,-2.12);
			\draw[thick,->] (2,2.53) -- (2.3,2.74);
			\draw[thick,->] (2.43,2.3) -- (2.73,2.5);
			\node[inner sep=0,anchor=west,text width=2.5cm] (note1) at (1.7,2.73) {$10$};
			\node[inner sep=0,anchor=west,text width=2.5cm] (note1) at (4.22,3.8) {$11$};
			\node[inner sep=0,anchor=west,text width=3.3cm] (note1) at (-0.3,0) {$3$};
			\node[inner sep=0,anchor=west,text width=3.3cm] (note1) at (-0.3,3.6) {$3$}; 
			\node[inner sep=0,anchor=west,text width=3.3cm] (note1) at (2.7,6.1) {$3$};
			\node[inner sep=0,anchor=west,text width=3.3cm] (note1) at (6.8,6.1) {$3$};
			\node[inner sep=0,anchor=west,text width=3.3cm] (note1) at (9.2,3.6) {$3$};
			\node[inner sep=0,anchor=west,text width=3.3cm] (note1) at (9.2,0) {$3$};
			\node[inner sep=0,anchor=west,text width=3.3cm] (note1) at (6.8,-3.1) {$3$};
			\node[inner sep=0,anchor=west,text width=3.3cm] (note1) at (2.7,-3.1) {$3$};
			\node[inner sep=0,anchor=west,text width=3.3cm] (note1) at (0.47,4.3) {$1$};
			\node[inner sep=0,anchor=west,text width=3.3cm] (note1) at (1.47,5.1) {$2$};
			\node[inner sep=0,anchor=west,text width=3.3cm] (note1) at (4.2,6.2) {$4$};
			\node[inner sep=0,anchor=west,text width=3.3cm] (note1) at (5.4,6.2) {$5$};
			\node[inner sep=0,anchor=west,text width=3.3cm] (note1) at (7.7,5.1) {$2$};
			\node[inner sep=0,anchor=west,text width=3.3cm] (note1) at (8.3,4.5) {$1$};
			\node[inner sep=0,anchor=west,text width=3.3cm] (note1) at (9.2,2.4) {$6$};
			\node[inner sep=0,anchor=west,text width=3.3cm] (note1) at (9.2,1.2) {$7$};
			\node[inner sep=0,anchor=west,text width=3.3cm] (note1) at (8.39,-1) {$8$};
			\node[inner sep=0,anchor=west,text width=3.3cm] (note1) at (7.59,-2) {$9$};
			\node[inner sep=0,anchor=west,text width=3.3cm] (note1) at (5.4,-3.2) {$7$};
			\node[inner sep=0,anchor=west,text width=3.3cm] (note1) at (4.2,-3.2) {$6$};
			\node[inner sep=0,anchor=west,text width=3.3cm] (note1) at (1.6,-2) {$9$};
			\node[inner sep=0,anchor=west,text width=3.3cm] (note1) at (0.6,-1) {$8$};
			\node[inner sep=0,anchor=west,text width=3.3cm] (note1) at (-0.3,1.2) {$5$};
			\node[inner sep=0,anchor=west,text width=3.3cm] (note1) at (-0.3,2.4) {$4$};
			\end{tikzpicture}
		} %
		\caption{Separation of the critical edges.}
		\label{fig:torussep} 
	\end{figure}
	We choose red colored edges as the critical edges that belong to $M-M_{1}$.   These red edges have a common vertex.  
	We separate them using bisections (see Figure \ref{fig:torussep}).
	\begin{figure}[h]
		\centering 
		\resizebox{0.5\textwidth}{!}{%
			\begin{tikzpicture}
			\draw (0,0)  -- (0,3.6) -- (3,6)-- (6.6,6)-- (9,3.6)-- (9,0)-- (6.6,-3)-- (3,-3)-- (0,0);
			\draw (3,2)  -- (2,2.53);
			\draw (1.87,5.1)  -- (4.2,6);
			\draw (1.87,5.1)  -- (4.2,5);
			\draw (4.2,6)  -- (4.2,5);
			\draw (4.2,5)  -- (5.4,6);
			\draw (4.2,5)  -- (5.4,5);
			\draw (5.4,5)  -- (6.6,6);
			\draw (5.4,5)  -- (5.4,6);
			\draw (5.4,5)  -- (7.5,5.1);
			\draw (0.87,4.3)  -- (4.2,5);
			\draw (0.87,4.3)  -- (4.2,4);
			\draw (4.2,4)  -- (4.2,5);
			\draw (4.2,4)  -- (5.4,5);
			\draw (4.2,4)  -- (5.4,4);
			\draw (5.4,4)  -- (5.4,5);
			\draw (5.4,4)  -- (7.5,5.1);
			\draw (5.4,4)  -- (8.1,4.5);
			\draw (5.4,4)  -- (9,3.6);
			\draw (5.4,4)  -- (6,3);
			\draw (5.4,4)  -- (5.2,1);
			\draw (5.4,4)  -- (4.7,2);
			\draw (5.4,4)  -- (2.5,1);
			\draw (5.4,4)  -- (3,2);
			\draw (3,2)  -- (4.2,4);
			\draw (4.2,4)  -- (2,2.53);
			\draw (4.2,4)  -- (0,2.4);
			\draw (4.2,4)  -- (0,3.6);
			\draw (0,1.2)  -- (3,2);
			\draw (0,0)  -- (3,2);
			\draw (3,2)  -- (2.5,1);
			\draw (2.5,1)  -- (4.7,2);
			\draw (4.7,2)  -- (5.2,1);
			\draw (5.2,1)  -- (6,3);
			\draw (6,3)  -- (9,3.6);
			\draw (6,3)  -- (6.6,1.5);
			\draw (6.6,1.5)  -- (9,3.6);
			\draw (5.2,1)  -- (6.6,1.5);
			\draw (4,0.5)  -- (5.2,1);
			\draw (4,0.5)  -- (4.7,2);
			\draw (4,0.5)  -- (2.5,1);
			\draw (2.5,1)  -- (0,0);
			\draw (4,0.5)  -- (0,0);
			\draw (6.6,1.5)  -- (7,1);
			\draw (7,1)  -- (9,3.6);
			\draw (7,1)  -- (9,2.4);
			\draw (5,-1.5)  -- (7,1);
			\draw (5,-1.5)  -- (9,2.4);
			\draw (5,-1.5)  -- (6.6,1.5);
			\draw (5,-1.5)  -- (5.2,1);
			\draw (5,-1.5)  -- (4,0.5);
			\draw (5,-1.5)  -- (0,0);
			\draw (5,-1.5)  -- (9,1.2);
			\draw (5,-1.5)  -- (6.5,-1.5);
			\draw (5,-1.5)  -- (4.2,-2);
			\draw (5,-1.5)  -- (1,-1);
			\draw (9,1.2)  -- (6.5,-1.5);
			\draw (9,0)  -- (6.5,-1.5);
			\draw (8.19,-1)  -- (6.5,-1.5);
			\draw (5.4,-2.5)  -- (6.5,-1.5);
			\draw (4.2,-2)  -- (6.5,-1.5);
			\draw (4.2,-2)  -- (5.4,-2.5);
			\draw (8.19,-1)  -- (5.4,-2.5);
			\draw (7.39,-2)  -- (5.4,-2.5);
			\draw (5.4,-3)  -- (5.4,-2.5);
			\draw (4.2,-3)  -- (5.4,-2.5);
			\draw (5.4,-3)  -- (7.39,-2);
			\draw (4.2,-3)  -- (4.2,-2);
			\draw (3,-3)  -- (4.2,-2);
			\draw (2,-2)  -- (4.2,-2);
			\draw (1,-1)  -- (4.2,-2);
			\draw (1,-1)  -- (5,-1.5);
			\draw [line width=0.3mm, color=blue](4.2,-3) -- (5.4,-2.5);
			\draw (2,2.53) node[circle,fill,inner sep=1pt] {};
			\draw (5.4,-2.5) node[circle,fill,color=black,inner sep=2pt] {};
			\node[inner sep=0,anchor=west,text width=3.3cm] (note1) at (5.3,-2.3) {$w$};
			\draw[fill=black] (5.4,4)--(8.1,4.5)--(7.5,5.1)--(5.4,4);
			\draw[fill=black!20] (5.4,4)--(8.1,4.5)--(9,3.6)--(5.4,4);
			\draw[fill=black!20] (5.4,4)--(6,3)--(9,3.6)--(5.4,4);
			\draw[fill=black!20] (6,3)--(6.6,1.5)--(9,3.6)--(6,3);
			\draw[fill=black!20] (6.6,1.5)--(7,1)--(9,3.6)--(6.6,1.5);
			\draw[fill=black!20] (7,1)--(9,2.4)--(9,3.6)--(7,1);
			\draw[fill=black!20] (7,1)--(5,-1.5)--(9,2.4)--(7,1);
			\draw[fill=black!20] (5,-1.5)--(9,1.2)--(9,2.4)--(5,-1.5);
			\draw[fill=black!20] (5,-1.5)--(9,1.2)--(6.5,-1.5)--(5,-1.5);
			\draw[fill=black!20] (5,-1.5)--(6.5,-1.5)--(4.2,-2)--(5,-1.5);
			\draw[fill=black!20] (5,-1.5)--(4.2,-2)--(1,-1)--(5,-1.5);
			\draw[fill=black!20] (5,-1.5)--(1,-1)--(0,0)--(5,-1.5);
			\draw[fill=black!20] (5,-1.5)--(0,0)--(4,0.5)--(5,-1.5);
			\draw[fill=black!20] (0,0)--(4,0.5)--(2.5,1)--(0,0);
			\draw[fill=black!20] (0,0)--(3,2)--(2.5,1)--(0,0);
			\draw[fill=black!20] (0,0)--(3,2)--(0,1.2)--(0,0);
			\draw[fill=black!20] (3,2)--(2,2.53)--(0,1.2)--(3,2);
			\draw[fill=black!20] (3,2)--(4.2,4)--(2,2.53)--(3,2);
			\draw[fill=black!20] (3,2)--(5.4,4)--(4.2,4)--(3,2);
			\draw[fill=black!20] (4.2,4)--(5.4,4)--(5.4,5)--(4.2,4);
			\draw[fill=black!20] (5.4,4)--(5.4,5)--(7.5,5.1)--(5.4,4);
			\draw[fill=black!20] (5.4,5)--(7.5,5.1)--(6.6,6)--(5.4,5);
			\draw[fill=black!20] (1.87,5.1)--(4.2,6)--(3,6)--(1.87,5.1);
			\draw[fill=black!20] (1.87,5.1)--(4.2,6)--(4.2,5)--(1.87,5.1);
			\draw[fill=black!20] (1.87,5.1)--(4.2,5)--(0.87,4.3)--(1.87,5.1);
			\draw[fill=black!20] (0.87,4.3)--(4.2,4)--(4.2,5)--(0.87,4.3);
			\draw[fill=black!20] (4.2,4)--(5.4,5)--(4.2,5)--(4.2,4);
			\draw[fill=black!20] (0,3.6)--(4.2,4)--(0,2.4)--(0,3.6);
			\draw[fill=black!20] (0,2.4)--(4.2,4)--(0,1.2)--(0,2.4);
			\draw [line width=0.3mm, color=red](0,1.2) -- (2,2.53);
			\draw  [line width=0.3mm, color=blue](5.4,-3) -- (6.6,-3);
			\draw  [line width=0.3mm, color=red](4.2,4) -- (5.4,5);
			\node[inner sep=0,anchor=west,text width=3.3cm] (note1) at (5.5,4.5) {$a$};
			\draw[dotted]  [line width=0.5mm](7.5,5.1) -- (9,3.6);
			\draw[dotted]  [line width=0.5mm](9,3.6) -- (9,1.2);
			\draw[dotted]  [line width=0.5mm](9,1.2) -- (6.5,-1.5);
			\draw[dotted]  [line width=0.5mm](6.5,-1.5) -- (4.2,-2);
			\draw[dotted]  [line width=0.5mm](4.2,-2) -- (1,-1);
			\draw[dotted]  [line width=0.5mm](1,-1) -- (0,0);
			\draw[dotted]  [line width=0.5mm](0,0) -- (0,2.4);
			\draw[dotted]  [line width=0.5mm](0.87,4.3) -- (4.2,4);
			\draw[dotted]  [line width=0.5mm](4.2,5) -- (5.4,5);
			\draw[dotted]  [line width=0.5mm](4.2,5) -- (4.2,6);
			\draw[dotted]  [line width=0.5mm](4.2,6) -- (6.6,6);
			\draw[dotted]  [line width=0.5mm](5.4,5) -- (6.6,6);
			\draw[dotted]  [line width=0.5mm](3,2) -- (5.4,4);
			\draw[dotted]  [line width=0.5mm](3,2) -- (2.5,1);
			\draw[dotted]  [line width=0.5mm](2.5,1) -- (4,0.5);
			\draw[dotted]  [line width=0.5mm](4,0.5) -- (5,-1.5);
			\draw[dotted]  [line width=0.5mm](5,-1.5) -- (7,1);
			\draw[dotted]  [line width=0.5mm](7,1) -- (6.6,1.5);
			\draw[dotted]  [line width=0.5mm](6.6,1.5) -- (6,3);
			\draw[dotted]  [line width=0.5mm](6,3) -- (5.4,4);
			\draw[dotted]  [line width=0.5mm](2,2.53) -- (4.2,4);
			\draw[dotted]  [line width=0.5mm](0,3.6) -- (4.2,4);
			\draw[dotted]  [line width=0.5mm](5.4,4) -- (5.4,5);
			\draw[thick,->] (2.5,5.6) -- (2.8,5.6);
			\draw[thick,->] (3.17,5.6) -- (3.37,5.37);
			\draw[thick,->] (4.2,5.5) -- (4.5,5.5);
			\draw[thick,->] (4.2,5) -- (4.459,5.23);
			\draw[thick,->] (4.8,5) -- (4.8,5.3);
			\draw[thick,->] (5.4,5.5) -- (5.7,5.5);
			\draw[thick,->] (5.4,5) -- (5.659,5.23);
			\draw[thick,->] (6.45,5.05) -- (6.45,5.35);
			\draw[thick,->] (3.2,5.045) -- (2.9,4.9);
			\draw[thick,->] (2.7,4.68) -- (2.8,4.38);
			\draw[thick,->] (2.69,4.138) -- (2.33,4.03);
			\draw[thick,->] (4.2,4) -- (3.7,3.95);
			\draw[thick,->] (0,3) -- (0.3,3);
			\draw[thick,->] (1.9,3.125) -- (2.2,2.99);
			\draw[thick,->] (1.9,1.7) -- (2.1,2.05);
			\draw[thick,->] (1.5,1) -- (1.3,1.2);
			\draw[thick,->] (1.9,0.75) -- (1.7,1);
			\draw[thick,->] (2.35,0.295) -- (2.35,0.599);
			\draw[thick,->] (3,-0.9) -- (3.05,-0.5);
			\draw[thick,->] (2.2,-1.15) -- (2.2,-0.8);
			\draw[thick,->] (2.6,-1.5) -- (2.4,-1.8);
			\draw[thick,->] (3.05,-2) -- (3.05,-2.3);
			\draw[thick,->] (3.6,-2.5)-- (3.8,-2.75);
			\draw[thick,->] (4.2,-2.5) -- (4.5,-2.5);
			\draw[thick,->] (4.2,-3) -- (4.5,-3);
			\draw[thick,->] (5.4,-2.75) -- (5.1,-2.75);
			\draw[thick,->] (6.93,-2.6) -- (6.6,-2.6);
			\draw[thick,->] (6.3,-2.55) -- (6,-2.5);
			\draw[thick,->] (6.5,-2.22) -- (6.7,-1.95);
			\draw[thick,->] (6.7,-1.8) -- (6.5,-1.6);
			\draw[thick,->] (7.4,-1.23) -- (7.6,-1);
			\draw[thick,->] (8,-0.6) -- (8.1,-0.3);
			\draw[thick,->] (9,1.2) -- (8.7,0.88);
			\draw[thick,->] (6.6,-0.42) -- (6.6,-0.72);
			\draw[thick,->] (7,0.45) -- (6.9,0.1);
			\draw[thick,->] (7.8,1.56) -- (7.65,1.25);
			\draw[thick,->] (7,1) -- (6.77,0.7);
			\draw[thick,->] (6.85,1.2) -- (6.55,0.9);
			\draw[thick,->] (8,2.3) -- (7.9,1.9);
			\draw[thick,->] (7.8,2.555) -- (7.7,2.2);
			\draw[thick,->] (7.5,3.3) -- (7.4,3);
			\draw[thick,->] (7.3,3.8) -- (7.2,3.5);
			\draw[thick,->] (7.4,4.38) -- (7.3,4.08);
			\draw[thick,->] (6.4,4.53) -- (6.3,4.8);
			\draw[thick,->] (5.4,4) -- (5.4,4.35);
			\draw[thick,->] (4.8,4) -- (4.8,4.35);
			\draw[thick,->] (4.2,4.5) -- (4.55,4.5);
			\draw[thick,->] (3.6,3) -- (3.95,3.1);
			\draw[thick,->] (3,2) -- (3.3,2.25);
			\draw[thick,->] (2.75,1.5) -- (3.1,1.8);
			\draw[thick,->] (2.5,1) -- (2.83,1.34);
			\draw[thick,->] (3.1,0.8) -- (3.4,1);
			\draw[thick,->] (3.6,1.5) -- (3.8,1.79);
			\draw[thick,->] (4.7,2) -- (4.82,2.32);
			\draw[thick,->] (4,0.5) -- (4.32,0.64);
			\draw[thick,->] (4.33,1.2) -- (4.6,1.3);
			\draw[thick,->] (4.9,1.6) -- (5,1.9);
			\draw[thick,->] (5.2,1) -- (5.5,1.11);
			\draw[thick,->] (5.6,2.02) -- (5.9,1.9);
			\draw[thick,->] (5.3,2.5) -- (5.55,2.7);
			\draw[thick,->] (6.6,1.5) -- (6.48,1.8);
			\draw[thick,->] (6,3) -- (5.82,3.3);
			\draw[thick,->] (5.9,0.2) -- (5.6,0.3);
			\draw[thick,->] (5.13,0.05) -- (4.85,0.2);
			\draw[thick,->] (5,-1.5) -- (4.815,-1.13);
			\draw[thick,->] (6.5,-1.5) -- (6.23,-1.75);
			\draw[thick,->] (5.4,-1.74) -- (5.4,-2.05);
			\draw[thick,->] (5.75,-1.503) -- (5.42,-1.65);
			\draw[thick,->] (4.61,-1.75) -- (4.3,-1.65);
			\draw[thick,->] (2,-2) -- (1.75,-1.75);
			\draw[thick,->] (1,-1) -- (0.75,-0.75);
			\draw[thick,->] (4.2,6) -- (4.55,6);
			\draw[thick,->] (5.4,6) -- (5.75,6);
			\draw[thick,->] (7.5,5.1) -- (7.75,4.85);
			\draw[thick,->] (8.1,4.5) -- (8.35,4.25);
			\draw[thick,->] (9,3.6) -- (9,3.25);
			\draw[thick,->] (9,2.4) -- (9,2.05);
			\draw[thick,->] (8.19,-1) -- (8.41,-0.75);
			\draw[thick,->] (7.39,-2) -- (7.59,-1.75);
			\draw[thick,->] (4.2,-2) -- (4.5,-2.12);
			\draw[thick,->] (2,2.53) -- (2.3,2.74);
			\draw[thick,->] (2.43,2.3) -- (2.73,2.5);
			\node[inner sep=0,anchor=west,text width=2.5cm] (note1) at (1.7,2.73) {$10$};
			\node[inner sep=0,anchor=west,text width=2.5cm] (note1) at (4.22,3.8) {$11$};
			\node[inner sep=0,anchor=west,text width=3.3cm] (note1) at (-0.3,0) {$3$};
			\node[inner sep=0,anchor=west,text width=3.3cm] (note1) at (-0.3,3.6) {$3$}; 
			\node[inner sep=0,anchor=west,text width=3.3cm] (note1) at (2.7,6.1) {$3$};
			\node[inner sep=0,anchor=west,text width=3.3cm] (note1) at (6.8,6.1) {$3$};
			\node[inner sep=0,anchor=west,text width=3.3cm] (note1) at (9.2,3.6) {$3$};
			\node[inner sep=0,anchor=west,text width=3.3cm] (note1) at (9.2,0) {$3$};
			\node[inner sep=0,anchor=west,text width=3.3cm] (note1) at (6.8,-3.1) {$3$};
			\node[inner sep=0,anchor=west,text width=3.3cm] (note1) at (2.7,-3.1) {$3$};
			\node[inner sep=0,anchor=west,text width=3.3cm] (note1) at (0.47,4.3) {$1$};
			\node[inner sep=0,anchor=west,text width=3.3cm] (note1) at (1.47,5.1) {$2$};
			\node[inner sep=0,anchor=west,text width=3.3cm] (note1) at (4.2,6.2) {$4$};
			\node[inner sep=0,anchor=west,text width=3.3cm] (note1) at (5.4,6.2) {$5$};
			\node[inner sep=0,anchor=west,text width=3.3cm] (note1) at (7.7,5.1) {$2$};
			\node[inner sep=0,anchor=west,text width=3.3cm] (note1) at (8.3,4.5) {$1$};
			\node[inner sep=0,anchor=west,text width=3.3cm] (note1) at (9.2,2.4) {$6$};
			\node[inner sep=0,anchor=west,text width=3.3cm] (note1) at (9.2,1.2) {$7$};
			\node[inner sep=0,anchor=west,text width=3.3cm] (note1) at (8.39,-1) {$8$};
			\node[inner sep=0,anchor=west,text width=3.3cm] (note1) at (7.59,-2) {$9$};
			\node[inner sep=0,anchor=west,text width=3.3cm] (note1) at (5.4,-3.2) {$7$};
			\node[inner sep=0,anchor=west,text width=3.3cm] (note1) at (4.2,-3.2) {$6$};
			\node[inner sep=0,anchor=west,text width=3.3cm] (note1) at (1.6,-2) {$9$};
			\node[inner sep=0,anchor=west,text width=3.3cm] (note1) at (0.6,-1) {$8$};
			\node[inner sep=0,anchor=west,text width=3.3cm] (note1) at (-0.3,1.2) {$5$};
			\node[inner sep=0,anchor=west,text width=3.3cm] (note1) at (-0.3,2.4) {$4$};
			\end{tikzpicture}
		} %
		\caption{The $2$-paths end at the critical edges.}
		\label{fig:torusflow} 
	\end{figure}
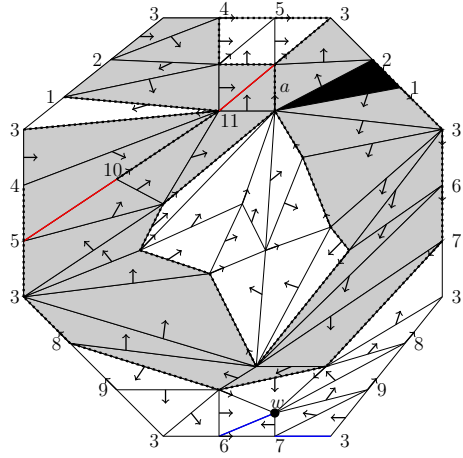
	The gray triangles show the $2$-paths that end at the critical edges in Figure \ref{fig:torusflow}.  
	
	First, we delete the interior of the critical $2$-cell and then begin to collapse free edges and triangles that paired 
	together in these $2$-paths up to the critical edges.  The gray coloured region is $M-M_{1}$, while the white region is 
	$M-M_{2}$.  The dotted curve is the boundary curve and the edge $a$ is the connecting edge (additional edge) between two circles.  Note that the connecting edge $a$ does not belong to the boundary curve, that is, $a\in Int(M-M_{1})$.  Note also that the edges $2-1, 10-11$ are not boundary components.  They are interior edges of $M-M_{1}$.  In addition, red critical edges are the interior edges of $M-M_{1}$.  Therefore, boundary curve is a disjoint union of a circle and a wedge of three circle at the point $3$ as on the figure left in Figure \ref{fig:wedge}.  These disjoint curve are connected in $M-M_{1}$ via the edge $a$ as in the figure on the right in Figure \ref{fig:wedge}.
	 
	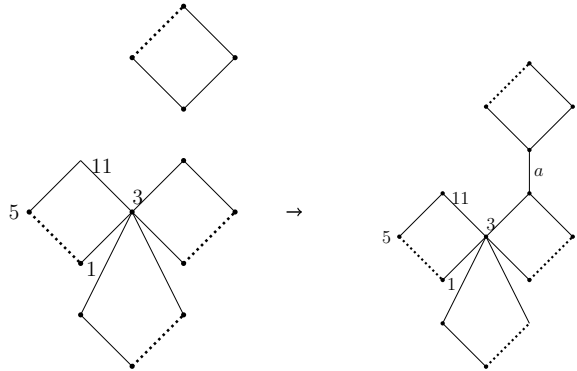
\begin{figure}[h]
		\resizebox{0.25\textwidth}{!}{%
		\begin{tikzpicture}
		
		\draw (0,3)--(1,2)--(2,3)--(1,4);
		\draw [dotted] [line width=0.5mm](0,3) -- (1,4);
		\draw (1,1)--(0,0)--(1,-1);
		\draw (1,1)--(2,0);
		\draw [dotted] [line width=0.5mm](1,-1) -- (2,0);
		\node[inner sep=0,anchor=west,text width=3.3cm] (note1) at (0,0.3) {$3$};
		\node[inner sep=0,anchor=west,text width=3.3cm] (note1) at (-2.4,0) {$5$};
		\node[inner sep=0,anchor=west,text width=3.3cm] (note1) at (-0.9,-1.1) {$1$};
		\node[inner sep=0,anchor=west,text width=3.3cm] (note1) at (-0.8,0.9) {$11$};
		\draw (1,4) node[circle,fill,inner sep=1pt] {};
		\draw (0,3) node[circle,fill,inner sep=1pt] {};
		\draw (2,3) node[circle,fill,inner sep=1pt] {};
		\draw (1,2) node[circle,fill,inner sep=1pt] {};
		\draw (1,1) node[circle,fill,inner sep=1pt] {};
		\draw (2,0) node[circle,fill,inner sep=1pt] {};
		\draw (1,-1) node[circle,fill,inner sep=1pt] {};
		\draw (0,0) node[circle,fill,inner sep=1pt] {};
		\draw (1,-2) node[circle,fill,inner sep=1pt] {};
		\draw (0,-3) node[circle,fill,inner sep=1pt] {};
		\draw (-1,-2) node[circle,fill,inner sep=1pt] {};
		\draw (-1,-1) node[circle,fill,inner sep=1pt] {};
		\draw (-2,0) node[circle,fill,inner sep=1pt] {};
		\draw (0,0) -- (-1,1);
		\draw (0,0) -- (-1,-1);
		\draw (-1,1) -- (-2,0);
		\draw [dotted] [line width=0.5mm](-2,0) -- (-1,-1);
		\draw (0,0) -- (-1,-2)--(0,-3);
		\draw (0,0) -- (1,-2);
		\draw [dotted] [line width=0.5mm](0,-3) -- (1,-2);
		\draw[thick,->] (3,0) -- (3.3,0);
		\end{tikzpicture}
	} %
		\qquad
		\resizebox{0.25\textwidth}{!}{%
			\begin{tikzpicture}
			\draw (0,3)--(1,2)--(2,3)--(1,4);
			\draw [dotted] [line width=0.5mm](0,3) -- (1,4);
			\draw (1,1)--(1,2);
			\draw (1,1)--(0,0)--(1,-1);
			\draw (1,1)--(2,0);
			\draw [dotted] [line width=0.5mm](1,-1) -- (2,0);
			\node[inner sep=0,anchor=west,text width=3.3cm] (note1) at (1.1,1.5) {$a$};
			\node[inner sep=0,anchor=west,text width=3.3cm] (note1) at (0,0.3) {$3$};
			\node[inner sep=0,anchor=west,text width=3.3cm] (note1) at (-2.4,0) {$5$};
			\node[inner sep=0,anchor=west,text width=3.3cm] (note1) at (-0.9,-1.1) {$1$};
			\node[inner sep=0,anchor=west,text width=3.3cm] (note1) at (-0.8,0.9) {$11$};
			\draw (1,4) node[circle,fill,inner sep=1pt] {};
			\draw (0,3) node[circle,fill,inner sep=1pt] {};
			\draw (2,3) node[circle,fill,inner sep=1pt] {};
			\draw (1,2) node[circle,fill,inner sep=1pt] {};
			\draw (1,1) node[circle,fill,inner sep=1pt] {};
			\draw (2,0) node[circle,fill,inner sep=1pt] {};
			\draw (1,-1) node[circle,fill,inner sep=1pt] {};
			\draw (0,0) node[circle,fill,inner sep=1pt] {};
			\draw (0,-3) node[circle,fill,inner sep=1pt] {};
			\draw (-1,-2) node[circle,fill,inner sep=1pt] {};
			\draw (-1,-1) node[circle,fill,inner sep=1pt] {};
			\draw (-2,0) node[circle,fill,inner sep=1pt] {};
			\draw (-1,1) node[circle,fill,inner sep=1pt] {};
			\draw (0,0) -- (-1,1);
			\draw (0,0) -- (-1,-1);
			\draw (-1,1) -- (-2,0);
			\draw [dotted] [line width=0.5mm](-2,0) -- (-1,-1);
			\draw (0,0) -- (-1,-2)--(0,-3);
			\draw (0,0) -- (1,-2);
			\draw [dotted] [line width=0.5mm](0,-3) -- (1,-2);
			\end{tikzpicture}
		} %
		\caption{The boundary curve.}
		\label{fig:wedge} 
	\end{figure}
	The figure on the left is the curve obtained after collapsing free edges and triangles up to critical edges.  
	If there are any free edges on this curve, we collapse them.  We obtain the figure on the right in Figure \ref{fig:wedge},  
	after deleting critical edges and collapsing free edges.  Then, we get rid of the connecting edge $a$ by using the method in 
	Figures \ref{fig:connecting edge}, \ref{fig:circle} (see Figure \ref{fig:torusflow2}).  
	\begin{figure}[h]
		\centering
		\resizebox{0.5\textwidth}{!}{%
			\begin{tikzpicture}
			\draw (0,0)  -- (0,3.6) -- (3,6)-- (6.6,6)-- (9,3.6)-- (9,0)-- (6.6,-3)-- (3,-3)-- (0,0);
			\draw (3,2)  -- (2,2.53);
			\draw (1.87,5.1)  -- (4.2,6);
			\draw (1.87,5.1)  -- (4.2,5);
			\draw (4.2,6)  -- (4.2,5);
			\draw (4.2,5)  -- (5.4,6);
			\draw (4.2,5)  -- (5.4,5);
			\draw (5.4,5)  -- (6.6,6);
			\draw (5.4,5)  -- (5.4,6);
			\draw (5.4,5)  -- (7.5,5.1);
			\draw (0.87,4.3)  -- (4.2,5);
			\draw (0.87,4.3)  -- (4.2,4);
			\draw (4.2,4)  -- (4.2,5);
			\draw (4.2,4)  -- (5.4,5);
			\draw (4.2,4)  -- (5.4,4);
			\draw (5.4,4)  -- (5.4,5);
			\draw (5.4,4)  -- (7.5,5.1);
			\draw (5.4,4)  -- (8.1,4.5);
			\draw (5.4,4)  -- (9,3.6);
			\draw (5.4,4)  -- (6,3);
			\draw (5.4,4)  -- (5.2,1);
			\draw (5.4,4)  -- (4.7,2);
			\draw (5.4,4)  -- (2.5,1);
			\draw (5.4,4)  -- (3,2);
			\draw (3,2)  -- (4.2,4);
			\draw (4.2,4)  -- (2,2.53);
			\draw (4.2,4)  -- (0,2.4);
			\draw (4.2,4)  -- (0,3.6);
			\draw (0,1.2)  -- (3,2);
			\draw (0,0)  -- (3,2);
			\draw (3,2)  -- (2.5,1);
			\draw (2.5,1)  -- (4.7,2);
			\draw (4.7,2)  -- (5.2,1);
			\draw (5.2,1)  -- (6,3);
			\draw (6,3)  -- (9,3.6);
			\draw (6,3)  -- (6.6,1.5);
			\draw (6.6,1.5)  -- (9,3.6);
			\draw (5.2,1)  -- (6.6,1.5);
			\draw (4,0.5)  -- (5.2,1);
			\draw (4,0.5)  -- (4.7,2);
			\draw (4,0.5)  -- (2.5,1);
			\draw (2.5,1)  -- (0,0);
			\draw (4,0.5)  -- (0,0);
			\draw (6.6,1.5)  -- (7,1);
			\draw (7,1)  -- (9,3.6);
			\draw (7,1)  -- (9,2.4);
			\draw (5,-1.5)  -- (7,1);
			\draw (5,-1.5)  -- (9,2.4);
			\draw (5,-1.5)  -- (6.6,1.5);
			\draw (5,-1.5)  -- (5.2,1);
			\draw (5,-1.5)  -- (4,0.5);
			\draw (5,-1.5)  -- (0,0);
			\draw (5,-1.5)  -- (9,1.2);
			\draw (5,-1.5)  -- (6.5,-1.5);
			\draw (5,-1.5)  -- (4.2,-2);
			\draw (5,-1.5)  -- (1,-1);
			\draw (9,1.2)  -- (6.5,-1.5);
			\draw (9,0)  -- (6.5,-1.5);
			\draw (8.19,-1)  -- (6.5,-1.5);
			\draw (5.4,-2.5)  -- (6.5,-1.5);
			\draw (4.2,-2)  -- (6.5,-1.5);
			\draw (4.2,-2)  -- (5.4,-2.5);
			\draw (8.19,-1)  -- (5.4,-2.5);
			\draw (7.39,-2)  -- (5.4,-2.5);
			\draw (5.4,-3)  -- (5.4,-2.5);
			\draw (4.2,-3)  -- (5.4,-2.5);
			\draw (5.4,-3)  -- (7.39,-2);
			\draw (4.2,-3)  -- (4.2,-2);
			\draw (3,-3)  -- (4.2,-2);
			\draw (2,-2)  -- (4.2,-2);
			\draw (1,-1)  -- (4.2,-2);
			\draw (1,-1)  -- (5,-1.5);
			\draw (4.8,4.5) -- (5.4,5);
			\draw (3,2)  -- (4.8,4);
			\draw (4.8,4)  -- (4.8,4.5);
			\draw (4.2,5)  -- (4.8,4.5);
			
			\draw [line width=0.3mm, color=blue](4.2,-3) -- (5.4,-2.5);
			\draw (2,2.53) node[circle,fill,inner sep=1pt] {};
			\draw (5.4,-2.5) node[circle,fill,color=black,inner sep=2pt] {};
			\node[inner sep=0,anchor=west,text width=3.3cm] (note1) at (5.3,-2.3) {$w$};
			\draw[fill=black] (6.6,4.23)--(8.1,4.5)--(7.5,5.1)--(6.2,4.43)--(6.6,4.23);
			\draw[fill=black!20] (6.6,4.23)--(8.1,4.5)--(9,3.6)--(6.6,3.87)--(6.6,4.23);
			\draw[fill=black!20] (6.6,3.87)--(6,3)--(9,3.6)--(6.6,3.87);
			\draw[fill=black!20] (6,3)--(6.6,1.5)--(9,3.6)--(6,3);
			\draw[fill=black!20] (6.6,1.5)--(7,1)--(9,3.6)--(6.6,1.5);
			\draw[fill=black!20] (7,1)--(9,2.4)--(9,3.6)--(7,1);
			\draw[fill=black!20] (7,1)--(5,-1.5)--(9,2.4)--(7,1);
			\draw[fill=black!20] (5,-1.5)--(9,1.2)--(9,2.4)--(5,-1.5);
			\draw[fill=black!20] (5,-1.5)--(9,1.2)--(6.5,-1.5)--(5,-1.5);
			\draw[fill=black!20] (5,-1.5)--(6.5,-1.5)--(4.2,-2)--(5,-1.5);
			\draw[fill=black!20] (5,-1.5)--(4.2,-2)--(1,-1)--(5,-1.5);
			\draw[fill=black!20] (5,-1.5)--(1,-1)--(0,0)--(5,-1.5);
			\draw[fill=black!20] (5,-1.5)--(0,0)--(4,0.5)--(5,-1.5);
			\draw[fill=black!20] (0,0)--(4,0.5)--(2.5,1)--(0,0);
			\draw[fill=black!20] (0,0)--(3,2)--(2.5,1)--(0,0);
			\draw[fill=black!20] (0,0)--(3,2)--(0,1.2)--(0,0);
			\draw[fill=black!20] (3,2)--(2,2.53)--(0,1.2)--(3,2);
			\draw[fill=black!20] (3,2)--(4.2,4)--(2,2.53)--(3,2);
			\draw[fill=black!20] (3,2)--(4.8,4)--(4.2,4)--(3,2);
			\draw[fill=black!20] (4.2,4)--(4.8,4)--(4.8,4.5)--(4.2,4);
			\draw[fill=black!20] (4.2,4)--(4.8,4.5)--(4.2,5)--(4.2,4);
			\draw[fill=black!20] (6.05,5.03)--(7.5,5.1)--(6.2,4.43)--(6.05,5.03);
			\draw[fill=black!20] (6.05,5.03)--(7.5,5.1)--(6.6,6)--(6.05,5.03);
			\draw[fill=black!20] (1.87,5.1)--(4.2,6)--(3,6)--(1.87,5.1);
			\draw[fill=black!20] (1.87,5.1)--(4.2,6)--(4.2,5)--(1.87,5.1);
			\draw[fill=black!20] (1.87,5.1)--(4.2,5)--(0.87,4.3)--(1.87,5.1);
			\draw[fill=black!20] (0.87,4.3)--(4.2,4)--(4.2,5)--(0.87,4.3);
			\draw[fill=black!20] (0,3.6)--(4.2,4)--(0,2.4)--(0,3.6);
			\draw[fill=black!20] (0,2.4)--(4.2,4)--(0,1.2)--(0,2.4);
			\draw [line width=0.3mm, color=red](0,1.2) -- (2,2.53);
			\draw [line width=0.3mm, color=blue](5.4,-3) -- (6.6,-3);
			\draw [line width=0.3mm, color=red](4.2,4) -- (4.8,4.5);
			\node[inner sep=0,anchor=west,text width=3.3cm] (note1) at (5.5,4.5) {$a$};
			\draw[dotted]  [line width=0.5mm](7.5,5.1) -- (9,3.6);
			\draw[dotted]  [line width=0.5mm](9,3.6) -- (9,1.2);
			\draw[dotted]  [line width=0.5mm](9,1.2) -- (6.5,-1.5);
			\draw[dotted]  [line width=0.5mm](6.5,-1.5) -- (4.2,-2);
			\draw[dotted]  [line width=0.5mm](4.2,-2) -- (1,-1);
			\draw[dotted]  [line width=0.5mm](1,-1) -- (0,0);
			\draw[dotted]  [line width=0.5mm](0,0) -- (0,2.4);
			\draw[dotted]  [line width=0.5mm](0.87,4.3) -- (4.2,4);
			\draw[dotted]  [line width=0.5mm](4.2,5) -- (4.8,4.5);
			\draw[dotted]  [line width=0.5mm](4.2,5) -- (4.2,6);
			\draw[dotted]  [line width=0.5mm](4.2,6) -- (6.6,6);
			\draw[dotted]  [line width=0.5mm](3,2) -- (4.8,4);
			\draw[dotted]  [line width=0.5mm](3,2) -- (2.5,1);
			\draw[dotted]  [line width=0.5mm](2.5,1) -- (4,0.5);
			\draw[dotted]  [line width=0.5mm](4,0.5) -- (5,-1.5);
			\draw[dotted]  [line width=0.5mm](5,-1.5) -- (7,1);
			\draw[dotted]  [line width=0.5mm](7,1) -- (6.6,1.5);
			\draw[dotted]  [line width=0.5mm](6.6,1.5) -- (6,3);
			\draw[dotted]  [line width=0.5mm](2,2.53) -- (4.2,4);
			\draw[dotted]  [line width=0.5mm](0,3.6) -- (4.2,4);
			\draw[dotted]  [line width=0.5mm](4.8,4) -- (4.8,4.5);
			\draw (4.8,4) node[circle,fill,inner sep=1pt] {};
			\draw (4.8,4.5) node[circle,fill,inner sep=1pt] {};
			\draw (2,2.53) node[circle,fill,inner sep=1pt] {};
			\draw[thick,->] (4.35,3.5) -- (4.7,3.6);
			\draw[thick,->] (4.5,4.75) -- (4.8,4.75);
			\draw[thick,->] (2.5,5.6) -- (2.8,5.6);
			\draw[thick,->] (3.17,5.6) -- (3.37,5.37);
			\draw[thick,->] (4.2,5.5) -- (4.5,5.5);
			\draw[thick,->] (4.2,5) -- (4.459,5.23);
			\draw[thick,->] (4.8,5) -- (4.8,5.3);
			\draw[thick,->] (5.4,5.5) -- (5.7,5.5);
			\draw[thick,->] (5.4,5) -- (5.659,5.23);
			\draw[thick,->] (6.45,5.05) -- (6.45,5.35);
			\draw[thick,->] (3.2,5.045) -- (2.9,4.9);
			\draw[thick,->] (2.7,4.68) -- (2.8,4.38);
			\draw[thick,->] (2.69,4.138) -- (2.33,4.03);
			\draw[thick,->] (4.2,4) -- (3.7,3.95);
			\draw[thick,->] (0,3) -- (0.3,3);
			\draw[thick,->] (1.9,3.125) -- (2.2,2.99);
			\draw[thick,->] (1.9,1.7) -- (2.1,2.05);
			\draw[thick,->] (1.5,1) -- (1.3,1.2);
			\draw[thick,->] (1.9,0.75) -- (1.7,1);
			\draw[thick,->] (2.35,0.295) -- (2.35,0.599);
			\draw[thick,->] (3,-0.9) -- (3.05,-0.5);
			\draw[thick,->] (2.2,-1.15) -- (2.2,-0.8);
			\draw[thick,->] (2.6,-1.5) -- (2.4,-1.8);
			\draw[thick,->] (3.05,-2) -- (3.05,-2.3);
			\draw[thick,->] (3.6,-2.5)-- (3.8,-2.75);
			\draw[thick,->] (4.2,-2.5) -- (4.5,-2.5);
			\draw[thick,->] (4.2,-3) -- (4.5,-3);
			\draw[thick,->] (5.4,-2.75) -- (5.1,-2.75);
			\draw[thick,->] (6.93,-2.6) -- (6.6,-2.6);
			\draw[thick,->] (6.3,-2.55) -- (6,-2.5);
			\draw[thick,->] (6.5,-2.22) -- (6.7,-1.95);
			\draw[thick,->] (6.7,-1.8) -- (6.5,-1.6);
			\draw[thick,->] (7.4,-1.23) -- (7.6,-1);
			\draw[thick,->] (8,-0.6) -- (8.1,-0.3);
			\draw[thick,->] (9,1.2) -- (8.7,0.88);
			\draw[thick,->] (6.6,-0.42) -- (6.6,-0.72);
			\draw[thick,->] (7,0.45) -- (6.9,0.1);
			\draw[thick,->] (7.8,1.56) -- (7.65,1.25);
			\draw[thick,->] (7,1) -- (6.77,0.7);
			\draw[thick,->] (6.85,1.2) -- (6.55,0.9);
			\draw[thick,->] (8,2.3) -- (7.9,1.9);
			\draw[thick,->] (7.8,2.555) -- (7.7,2.2);
			\draw[thick,->] (7.5,3.3) -- (7.4,3);
			\draw[thick,->] (7.3,3.8) -- (7.2,3.5);
			\draw[thick,->] (7.4,4.38) -- (7.3,4.08);
			\draw[thick,->] (6.4,4.53) -- (6.3,4.8);
			\draw[thick,->] (5.4,4) -- (5.4,4.35);
			\draw[thick,->] (4.2,4.5) -- (4.55,4.5);
			\draw[thick,->] (3.6,3) -- (3.95,3.1);
			\draw[thick,->] (3,2) -- (3.3,2.25);
			\draw[thick,->] (2.75,1.5) -- (3.1,1.8);
			\draw[thick,->] (2.5,1) -- (2.83,1.34);
			\draw[thick,->] (3.1,0.8) -- (3.4,1);
			\draw[thick,->] (3.6,1.5) -- (3.8,1.79);
			\draw[thick,->] (4.7,2) -- (4.82,2.32);
			\draw[thick,->] (4,0.5) -- (4.32,0.64);
			\draw[thick,->] (4.33,1.2) -- (4.6,1.3);
			\draw[thick,->] (4.9,1.6) -- (5,1.9);
			\draw[thick,->] (5.2,1) -- (5.5,1.11);
			\draw[thick,->] (5.6,2.02) -- (5.9,1.9);
			\draw[thick,->] (5.3,2.5) -- (5.55,2.7);
			\draw[thick,->] (6.6,1.5) -- (6.48,1.8);
			\draw[thick,->] (6,3) -- (5.82,3.3);
			\draw[thick,->] (5.9,0.2) -- (5.6,0.3);
			\draw[thick,->] (5.13,0.05) -- (4.85,0.2);
			\draw[thick,->] (5,-1.5) -- (4.815,-1.13);
			\draw[thick,->] (6.5,-1.5) -- (6.23,-1.75);
			\draw[thick,->] (5.4,-1.74) -- (5.4,-2.05);
			\draw[thick,->] (5.75,-1.503) -- (5.42,-1.65);
			\draw[thick,->] (4.61,-1.75) -- (4.3,-1.65);
			\draw[thick,->] (2,-2) -- (1.75,-1.75);
			\draw[thick,->] (1,-1) -- (0.75,-0.75);
			\draw[thick,->] (4.2,6) -- (4.55,6);
			\draw[thick,->] (5.4,6) -- (5.75,6);
			\draw[thick,->] (7.5,5.1) -- (7.75,4.85);
			\draw[thick,->] (8.1,4.5) -- (8.35,4.25);
			\draw[thick,->] (9,3.6) -- (9,3.25);
			\draw[thick,->] (9,2.4) -- (9,2.05);
			\draw[thick,->] (8.19,-1) -- (8.41,-0.75);
			\draw[thick,->] (7.39,-2) -- (7.59,-1.75);
			\draw[thick,->] (4.2,-2) -- (4.5,-2.12);
			\draw[thick,->] (2,2.53) -- (2.3,2.74);
			\draw[thick,->] (2.43,2.3) -- (2.73,2.5);
			\draw[thick,->] (4.8,4) -- (5.1,4);
			\draw[thick,->] (4.5,4) -- (4.5,4.2);
			\draw[thick,->] (4.8,4.25) -- (5.1,4.25);
			\draw[thick,->] (4.8,4.5) -- (5.1,4.74);
			\node[inner sep=0,anchor=west,text width=2.5cm] (note1) at (1.7,2.73) {$10$};
			\node[inner sep=0,anchor=west,text width=2.5cm] (note1) at (4.22,3.8) {$11$};
			\node[inner sep=0,anchor=west,text width=3.3cm] (note1) at (-0.3,0) {$3$};
			\node[inner sep=0,anchor=west,text width=3.3cm] (note1) at (-0.3,3.6) {$3$}; 
			\node[inner sep=0,anchor=west,text width=3.3cm] (note1) at (2.7,6.1) {$3$};
			\node[inner sep=0,anchor=west,text width=3.3cm] (note1) at (6.8,6.1) {$3$};
			\node[inner sep=0,anchor=west,text width=3.3cm] (note1) at (9.2,3.6) {$3$};
			\node[inner sep=0,anchor=west,text width=3.3cm] (note1) at (9.2,0) {$3$};
			\node[inner sep=0,anchor=west,text width=3.3cm] (note1) at (6.8,-3.1) {$3$};
			\node[inner sep=0,anchor=west,text width=3.3cm] (note1) at (2.7,-3.1) {$3$};
			\node[inner sep=0,anchor=west,text width=3.3cm] (note1) at (0.47,4.3) {$1$};
			\node[inner sep=0,anchor=west,text width=3.3cm] (note1) at (1.47,5.1) {$2$};
			\node[inner sep=0,anchor=west,text width=3.3cm] (note1) at (4.2,6.2) {$4$};
			\node[inner sep=0,anchor=west,text width=3.3cm] (note1) at (5.4,6.2) {$5$};
			\node[inner sep=0,anchor=west,text width=3.3cm] (note1) at (7.7,5.1) {$2$};
			\node[inner sep=0,anchor=west,text width=3.3cm] (note1) at (8.3,4.5) {$1$};
			\node[inner sep=0,anchor=west,text width=3.3cm] (note1) at (9.2,2.4) {$6$};
			\node[inner sep=0,anchor=west,text width=3.3cm] (note1) at (9.2,1.2) {$7$};
			\node[inner sep=0,anchor=west,text width=3.3cm] (note1) at (8.39,-1) {$8$};
			\node[inner sep=0,anchor=west,text width=3.3cm] (note1) at (7.59,-2) {$9$};
			\node[inner sep=0,anchor=west,text width=3.3cm] (note1) at (5.4,-3.2) {$7$};
			\node[inner sep=0,anchor=west,text width=3.3cm] (note1) at (4.2,-3.2) {$6$};
			\node[inner sep=0,anchor=west,text width=3.3cm] (note1) at (1.6,-2) {$9$};
			\node[inner sep=0,anchor=west,text width=3.3cm] (note1) at (0.6,-1) {$8$};
			\node[inner sep=0,anchor=west,text width=3.3cm] (note1) at (-0.3,1.2) {$5$};
			\node[inner sep=0,anchor=west,text width=3.3cm] (note1) at (-0.3,2.4) {$4$};
			\draw (6.6,3.87) node[circle,fill,inner sep=1pt] {};
			\draw (6.6,4.23) node[circle,fill,inner sep=1pt] {};
			\draw (6.2,4.43) node[circle,fill,inner sep=1pt] {};
			\draw (6.05,5.03) node[circle,fill,inner sep=1pt] {};
			\draw[dotted]  [line width=0.5mm](6,3) -- (6.6,3.87);
			\draw[dotted]  [line width=0.5mm](6.6,3.87) -- (6.6,4.23);
			\draw[dotted]  [line width=0.5mm](6.6,4.23) -- (6.2,4.43);
			\draw[dotted]  [line width=0.5mm](6.2,4.43) -- (6.05,5.03);
			\draw[dotted]  [line width=0.5mm](6.05,5.03) -- (6.6,6);
			\draw[thick,->] (6.05,5.03) -- (5.65,5.02);
			\draw[thick,->] (6.2,4.43) -- (5.9,4.27);
			\draw[thick,->] (6.6,4.23) -- (6.2,4.15);
			\draw[thick,->] (6.6,3.87) -- (6.2,3.897);
			\draw[thick,->] (6.3,5.45) -- (5.9,5.23);
			\draw[thick,->] (6.15,4.69) -- (5.8,4.63);
			\draw[thick,->] (6.35,4.33) -- (6.05,4.2);
			\draw[thick,->] (6.6,4.1) -- (6.3,4);
			\draw[thick,->] (6.35,3.5) -- (6,3.7);
			\end{tikzpicture}
		} %
		\caption{}
		\label{fig:torusflow2} 
	\end{figure}
	
	Now, the dotted curve is a wedge of three circles at the point $3$.  We get rid of this wedge point 
	by using the method in Figures \ref{fig:wedge1} and \ref{fig:unwedge1} (see Figure \ref{fig:torusflow3}).  
	We pair all the cells on the new boundary components as we mention in the proof of the Theorem \ref{decompose}.  
	At the end, the boundary curve is a circle as we want.
	\begin{figure}[h]
		\centering
		\resizebox{0.5\textwidth}{!}{%
			\begin{tikzpicture}
			\draw (0,0)  -- (0,3.6) -- (3,6)-- (6.6,6)-- (9,3.6)-- (9,0)-- (6.6,-3)-- (3,-3)-- (0,0);
			\draw (3,2)  -- (2,2.53);
			\draw (1.87,5.1)  -- (4.2,6);
			\draw (1.87,5.1)  -- (4.2,5);
			\draw (4.2,6)  -- (4.2,5);
			\draw (4.2,5)  -- (5.4,6);
			\draw (4.2,5)  -- (5.4,5);
			\draw (5.4,5)  -- (6.6,6);
			\draw (5.4,5)  -- (5.4,6);
			\draw (5.4,5)  -- (7.5,5.1);
			\draw (0.87,4.3)  -- (4.2,5);
			\draw (0.87,4.3)  -- (4.2,4);
			\draw (4.2,4)  -- (4.2,5);
			\draw (4.2,4)  -- (5.4,5);
			\draw (4.2,4)  -- (5.4,4);
			\draw (5.4,4)  -- (5.4,5);
			\draw (5.4,4)  -- (7.5,5.1);
			\draw (5.4,4)  -- (8.1,4.5);
			\draw (5.4,4)  -- (9,3.6);
			\draw (5.4,4)  -- (6,3);
			\draw (5.4,4)  -- (5.2,1);
			\draw (5.4,4)  -- (4.7,2);
			\draw (5.4,4)  -- (2.5,1);
			\draw (5.4,4)  -- (3,2);
			\draw (3,2)  -- (4.2,4);
			\draw (4.2,4)  -- (2,2.53);
			\draw (4.2,4)  -- (0,2.4);
			\draw (4.2,4)  -- (0,3.6);
			\draw (0,1.2)  -- (3,2);
			\draw (0,0)  -- (3,2);
			\draw (3,2)  -- (2.5,1);
			\draw (2.5,1)  -- (4.7,2);
			\draw (4.7,2)  -- (5.2,1);
			\draw (5.2,1)  -- (6,3);
			\draw (6,3)  -- (9,3.6);
			\draw (6,3)  -- (6.6,1.5);
			\draw (6.6,1.5)  -- (9,3.6);
			\draw (5.2,1)  -- (6.6,1.5);
			\draw (4,0.5)  -- (5.2,1);
			\draw (4,0.5)  -- (4.7,2);
			\draw (4,0.5)  -- (2.5,1);
			\draw (2.5,1)  -- (0,0);
			\draw (4,0.5)  -- (0,0);
			\draw (6.6,1.5)  -- (7,1);
			\draw (7,1)  -- (9,3.6);
			\draw (7,1)  -- (9,2.4);
			\draw (5,-1.5)  -- (7,1);
			\draw (5,-1.5)  -- (9,2.4);
			\draw (5,-1.5)  -- (6.6,1.5);
			\draw (5,-1.5)  -- (5.2,1);
			\draw (5,-1.5)  -- (4,0.5);
			\draw (5,-1.5)  -- (0,0);
			\draw (5,-1.5)  -- (9,1.2);
			\draw (5,-1.5)  -- (6.5,-1.5);
			\draw (5,-1.5)  -- (4.2,-2);
			\draw (5,-1.5)  -- (1,-1);
			\draw (9,1.2)  -- (6.5,-1.5);
			\draw (9,0)  -- (6.5,-1.5);
			\draw (8.19,-1)  -- (6.5,-1.5);
			\draw (5.4,-2.5)  -- (6.5,-1.5);
			\draw (4.2,-2)  -- (6.5,-1.5);
			\draw (4.2,-2)  -- (5.4,-2.5);
			\draw (8.19,-1)  -- (5.4,-2.5);
			\draw (7.39,-2)  -- (5.4,-2.5);
			\draw (5.4,-3)  -- (5.4,-2.5);
			\draw (4.2,-3)  -- (5.4,-2.5);
			\draw (5.4,-3)  -- (7.39,-2);
			\draw (4.2,-3)  -- (4.2,-2);
			\draw (3,-3)  -- (4.2,-2);
			\draw (2,-2)  -- (4.2,-2);
			\draw (1,-1)  -- (4.2,-2);
			\draw (1,-1)  -- (5,-1.5);
			\draw (4.8,4.5) -- (5.4,5);
			\draw (3,2)  -- (4.8,4);
			\draw (4.8,4)  -- (4.8,4.5);
			\draw (4.2,5)  -- (4.8,4.5);
			\draw [line width=0.3mm, color=blue](4.2,-3) -- (5.4,-2.5);
			\draw (2,2.53) node[circle,fill,inner sep=1pt] {};
			\draw (5.4,-2.5) node[circle,fill,color=black,inner sep=2pt] {};
			\node[inner sep=0,anchor=west,text width=3.3cm] (note1) at (5.3,-2.3) {$w$};
			\draw[fill=black] (6.6,4.23)--(8.1,4.5)--(7.5,5.1)--(6.2,4.43)--(6.6,4.23);
			\draw[fill=black!20] (6.6,4.23)--(8.1,4.5)--(7.8,3.73)--(6.6,3.87)--(6.6,4.23);
			\draw[fill=black!20] (6.6,3.87)--(6,3)--(7.8,3.36)--(7.8,3.73)--(6.6,3.87);
			\draw[fill=black!20] (6,3)--(6.6,1.5)--(8,2.73)--(7.8,3.36)--(6,3);
			\draw[fill=black!20] (6.6,1.5)--(7,1)--(8.2,2.55)--(8,2.73)--(6.6,1.5);
			\draw[fill=black!20] (7,1)--(9,2.4)--(8.2,2.55)--(7,1);
			\draw[fill=black!20] (7,1)--(5,-1.5)--(9,2.4)--(7,1);
			\draw[fill=black!20] (5,-1.5)--(9,1.2)--(9,2.4)--(5,-1.5);
			\draw[fill=black!20] (5,-1.5)--(9,1.2)--(6.5,-1.5)--(5,-1.5);
			\draw[fill=black!20] (5,-1.5)--(6.5,-1.5)--(4.2,-2)--(5,-1.5);
			\draw[fill=black!20] (5,-1.5)--(4.2,-2)--(1,-1)--(5,-1.5);
			\draw[fill=black!20] (5,-1.5)--(1,-1)--(1.5,-0.45)--(5,-1.5);
			\draw[fill=black!20] (5,-1.5)--(1.5,-0.45)--(1.5,0.185)--(4,0.5)--(5,-1.5);
			\draw[fill=black!20] (1.5,0.185)--(4,0.5)--(2.5,1)--(1.2,0.48)--(1.5,0.185);
			\draw[fill=black!20] (1,0.67)--(3,2)--(2.5,1)--(1.2,0.48)--(1,0.67);
			\draw[fill=black!20] (1,0.67)--(3,2)--(0,1.2)--(1,0.67);
			\draw[fill=black!20] (3,2)--(2,2.53)--(0,1.2)--(3,2);
			\draw[fill=black!20] (3,2)--(4.2,4)--(2,2.53)--(3,2);
			\draw[fill=black!20] (3,2)--(4.8,4)--(4.2,4)--(3,2);
			\draw[fill=black!20] (4.2,4)--(4.8,4)--(4.8,4.5)--(4.2,4);
			\draw[fill=black!20] (4.2,4)--(4.8,4.5)--(4.2,5)--(4.2,4);
			\draw[fill=black!20] (6.05,5.03)--(7.5,5.1)--(6.2,4.43)--(6.05,5.03);
			\draw[fill=black!20] (6.05,5.03)--(7.5,5.1)--(6.6,6)--(6.05,5.03);
			\draw[fill=black!20] (1.87,5.1)--(4.2,6)--(3,6)--(1.87,5.1);
			\draw[fill=black!20] (1.87,5.1)--(4.2,6)--(4.2,5)--(1.87,5.1);
			\draw[fill=black!20] (1.87,5.1)--(4.2,5)--(0.87,4.3)--(1.87,5.1);
			\draw[fill=black!20] (0.87,4.3)--(4.2,4)--(4.2,5)--(0.87,4.3);
			\draw[fill=black!20] (0,3.6)--(4.2,4)--(0,2.4)--(0,3.6);
			\draw[fill=black!20] (0,2.4)--(4.2,4)--(0,1.2)--(0,2.4);
			\draw [line width=0.3mm, color=red](0,1.2) -- (2,2.53);
			\draw [line width=0.3mm, color=blue](5.4,-3) -- (6.6,-3);
			\draw [line width=0.3mm, color=red](4.2,4) -- (4.8,4.5);
			\node[inner sep=0,anchor=west,text width=3.3cm] (note1) at (5.5,4.5) {$a$};
			\draw[dotted]  [line width=0.5mm](9,2.4) -- (9,1.2);
			\draw[dotted]  [line width=0.5mm](9,1.2) -- (6.5,-1.5);
			\draw[dotted]  [line width=0.5mm](6.5,-1.5) -- (4.2,-2);
			\draw[dotted]  [line width=0.5mm](4.2,-2) -- (1,-1);
			\draw[dotted]  [line width=0.5mm](0.87,4.3) -- (4.2,4);
			\draw[dotted]  [line width=0.5mm](4.2,5) -- (4.8,4.5);
			\draw[dotted]  [line width=0.5mm](4.2,5) -- (4.2,6);
			\draw[dotted]  [line width=0.5mm](4.2,6) -- (6.6,6);
			\draw[dotted]  [line width=0.5mm](3,2) -- (4.8,4);
			\draw[dotted]  [line width=0.5mm](3,2) -- (2.5,1);
			\draw[dotted]  [line width=0.5mm](2.5,1) -- (4,0.5);
			\draw[dotted] [line width=0.5mm](4,0.5) -- (5,-1.5);
			\draw[dotted]  [line width=0.5mm](5,-1.5) -- (7,1);
			\draw[dotted]  [line width=0.5mm](7,1) -- (6.6,1.5);
			\draw[dotted]  [line width=0.5mm](6.6,1.5) -- (6,3);
			\draw[dotted]  [line width=0.5mm](2,2.53) -- (4.2,4);
			\draw[dotted]  [line width=0.5mm](0,3.6) -- (4.2,4);
			\draw[dotted]  [line width=0.5mm](4.8,4) -- (4.8,4.5);
			\draw (4.8,4) node[circle,fill,inner sep=1pt] {};
			\draw (4.8,4.5) node[circle,fill,inner sep=1pt] {};
			\draw (2,2.53) node[circle,fill,inner sep=1pt] {};
			\draw (0,1.2) -- (1,0.67);
			\draw (1,0.67) -- (1.2,0.48);
			\draw (1.2,0.48) -- (1.5,0.185);
			\draw (1.5,0.185) -- (1.5,-0.45);
			\draw (1.5,-0.45) -- (1,-1);
			\draw (8.1,4.5) -- (7.8,3.73);
			\draw (7.8,3.73) -- (7.8,3.36);
			\draw (7.8,3.36) -- (8,2.73);
			\draw (8,2.73) -- (8.2,2.55);
			\draw (8.2,2.55) -- (9,2.4);
			\draw[thick,->] (4.35,3.5) -- (4.7,3.6);
			\draw[thick,->] (4.5,4.75) -- (4.8,4.75);
			\draw[thick,->] (2.5,5.6) -- (2.8,5.6);
			\draw[thick,->] (3.17,5.6) -- (3.37,5.37);
			\draw[thick,->] (4.2,5.5) -- (4.5,5.5);
			\draw[thick,->] (4.2,5) -- (4.459,5.23);
			\draw[thick,->] (4.8,5) -- (4.8,5.3);
			\draw[thick,->] (5.4,5.5) -- (5.7,5.5);
			\draw[thick,->] (5.4,5) -- (5.659,5.23);
			\draw[thick,->] (6.45,5.05) -- (6.45,5.35);
			\draw[thick,->] (3.2,5.045) -- (2.9,4.9);
			\draw[thick,->] (2.7,4.68) -- (2.8,4.38);
			\draw[thick,->] (2.69,4.138) -- (2.33,4.03);
			\draw[thick,->] (4.2,4) -- (3.7,3.95);
			\draw[thick,->] (0,3) -- (0.3,3);
			\draw[thick,->] (1.9,3.125) -- (2.2,2.99);
			\draw[thick,->] (1.9,1.7) -- (2.1,2.05);
			\draw[thick,->] (1.5,1) -- (1.3,1.2);
			\draw[thick,->] (1.9,0.75) -- (1.7,1);
			\draw[thick,->] (2.35,0.295) -- (2.35,0.599);
			\draw[thick,->] (3,-0.9) -- (3.05,-0.5);
			\draw[thick,->] (2.2,-1.15) -- (2.2,-0.8);
			\draw[thick,->] (2.6,-1.5) -- (2.4,-1.8);
			\draw[thick,->] (3.05,-2) -- (3.05,-2.3);
			\draw[thick,->] (3.6,-2.5)-- (3.8,-2.75);
			\draw[thick,->] (4.2,-2.5) -- (4.5,-2.5);
			\draw[thick,->] (4.2,-3) -- (4.5,-3);
			\draw[thick,->] (5.4,-2.75) -- (5.1,-2.75);
			\draw[thick,->] (6.93,-2.6) -- (6.6,-2.6);
			\draw[thick,->] (6.3,-2.55) -- (6,-2.5);
			\draw[thick,->] (6.5,-2.22) -- (6.7,-1.95);
			\draw[thick,->] (6.7,-1.8) -- (6.5,-1.6);
			\draw[thick,->] (7.4,-1.23) -- (7.6,-1);
			\draw[thick,->] (8,-0.6) -- (8.1,-0.3);
			\draw[thick,->] (9,1.2) -- (8.7,0.88);
			\draw[thick,->] (6.6,-0.42) -- (6.6,-0.72);
			\draw[thick,->] (7,0.45) -- (6.9,0.1);
			\draw[thick,->] (7.8,1.56) -- (7.65,1.25);
			\draw[thick,->] (7,1) -- (6.77,0.7);
			\draw[thick,->] (6.85,1.2) -- (6.55,0.9);
			\draw[thick,->] (8,2.3) -- (7.9,1.9);
			\draw[thick,->] (7.8,2.555) -- (7.7,2.2);
			\draw[thick,->] (7.5,3.3) -- (7.4,3);
			\draw[thick,->] (7.3,3.8) -- (7.2,3.5);
			\draw[thick,->] (7.4,4.38) -- (7.3,4.08);
			\draw[thick,->] (6.4,4.53) -- (6.3,4.8);
			\draw[thick,->] (5.4,4) -- (5.4,4.35);
			\draw[thick,->] (4.2,4.5) -- (4.55,4.5);
			\draw[thick,->] (3.6,3) -- (3.95,3.1);
			\draw[thick,->] (3,2) -- (3.3,2.25);
			\draw[thick,->] (2.75,1.5) -- (3.1,1.8);
			\draw[thick,->] (2.5,1) -- (2.83,1.34);
			\draw[thick,->] (3.1,0.8) -- (3.4,1);
			\draw[thick,->] (3.6,1.5) -- (3.8,1.79);
			\draw[thick,->] (4.7,2) -- (4.82,2.32);
			\draw[thick,->] (4,0.5) -- (4.32,0.64);
			\draw[thick,->] (4.33,1.2) -- (4.6,1.3);
			\draw[thick,->] (4.9,1.6) -- (5,1.9);
			\draw[thick,->] (5.2,1) -- (5.5,1.11);
			\draw[thick,->] (5.6,2.02) -- (5.9,1.9);
			\draw[thick,->] (5.3,2.5) -- (5.55,2.7);
			\draw[thick,->] (6.6,1.5) -- (6.48,1.8);
			\draw[thick,->] (6,3) -- (5.82,3.3);
			\draw[thick,->] (5.9,0.2) -- (5.6,0.3);
			\draw[thick,->] (5.13,0.05) -- (4.85,0.2);
			\draw[thick,->] (5,-1.5) -- (4.815,-1.13);
			\draw[thick,->] (6.5,-1.5) -- (6.23,-1.75);
			\draw[thick,->] (5.4,-1.74) -- (5.4,-2.05);
			\draw[thick,->] (5.75,-1.503) -- (5.42,-1.65);
			\draw[thick,->] (4.61,-1.75) -- (4.3,-1.65);
			\draw[thick,->] (2,-2) -- (1.75,-1.75);
			\draw[thick,->] (1,-1) -- (0.75,-0.75);
			\draw[thick,->] (4.2,6) -- (4.55,6);
			\draw[thick,->] (5.4,6) -- (5.75,6);
			\draw[thick,->] (7.5,5.1) -- (7.75,4.85);
			\draw[thick,->] (8.1,4.5) -- (8.35,4.25);
			\draw[thick,->] (9,3.6) -- (9,3.25);
			\draw[thick,->] (9,2.4) -- (9,2.05);
			\draw[thick,->] (8.19,-1) -- (8.41,-0.75);
			\draw[thick,->] (7.39,-2) -- (7.59,-1.75);
			\draw[thick,->] (4.2,-2) -- (4.5,-2.12);
			\draw[thick,->] (2,2.53) -- (2.3,2.74);
			\draw[thick,->] (2.43,2.3) -- (2.73,2.5);
			\draw[thick,->] (4.8,4) -- (5.1,4);
			\draw[thick,->] (4.5,4) -- (4.5,4.2);
			\draw[thick,->] (4.8,4.25) -- (5.1,4.25);
			\draw[thick,->] (4.8,4.5) -- (5.1,4.74);
			\draw[thick,->] (1,0.67) -- (0.7,0.48);
			\draw[thick,->] (1.08,0.6) -- (0.8,0.4);
			\draw[thick,->] (1.2,0.48) -- (0.86,0.35);
			\draw[thick,->] (1.34,0.36) -- (1,0.25);
			\draw[thick,->] (1.5,0.185) -- (1.1,0.13);
			\draw[thick,->] (1.5,-0.13) -- (1.2,-0.13);
			\draw[thick,->] (1.5,-0.45) -- (1.1,-0.335);
			\draw[thick,->] (1.28,-0.7) -- (0.98,-0.55);
			\draw[thick,->] (0.58,0.9) -- (0.3,0.7);
			\draw[thick,->] (7.95,4.1) -- (8.25,4);
			\draw[thick,->] (7.8,3.73) -- (8.2,3.685);
			\draw[thick,->] (7.8,3.55) -- (8.15,3.55);
			\draw[thick,->] (7.8,3.36) -- (8.15,3.44);
			\draw[thick,->] (7.9,3.05) -- (8.2,3.2);
			\draw[thick,->] (8,2.73) -- (8.3,2.99);
			\draw[thick,->] (8.1,2.63) -- (8.38,2.89);
			\draw[thick,->] (8.2,2.55) -- (8.43,2.85);
			\draw[thick,->] (8.497,2.49) -- (8.65,2.75);
			\draw[dotted]  [line width=0.5mm](7.5,5.1) -- (8.1,4.5);
			\draw[dotted]  [line width=0.5mm](1,-1) -- (1.5,-0.45);
			\draw[dotted]  [line width=0.5mm](0,1.2) -- (0,2.4);
			\draw[dotted]  [line width=0.5mm](0,1.2) -- (1,0.67);
			\draw[dotted]  [line width=0.5mm](1,0.67) -- (1.2,0.48);
			\draw[dotted]  [line width=0.5mm](1.2,0.48) -- (1.5,0.185);
			\draw[dotted]  [line width=0.5mm](1.5,0.185) -- (1.5,-0.45);
			\draw[dotted]  [line width=0.5mm](8.1,4.5) -- (7.8,3.73);
			\draw[dotted]  [line width=0.5mm](7.8,3.73) -- (7.8,3.36);
			\draw[dotted]  [line width=0.5mm](7.8,3.36) -- (8,2.73);
			\draw[dotted]  [line width=0.5mm](8,2.73) -- (8.2,2.55);
			\draw[dotted]  [line width=0.5mm](8.2,2.55) -- (9,2.4);
			\draw (1,0.67) node[circle,fill,inner sep=1pt] {};
			\draw (7.8,3.73) node[circle,fill,inner sep=1pt] {};
			\draw (7.8,3.36) node[circle,fill,inner sep=1pt] {};
			\draw (8,2.73) node[circle,fill,inner sep=1pt] {};
			\draw (8.2,2.55) node[circle,fill,inner sep=1pt] {};
			\draw (1.2,0.48) node[circle,fill,inner sep=1pt] {};
			\draw (1.5,0.185) node[circle,fill,inner sep=1pt] {};
			\draw (1.5,-0.45) node[circle,fill,inner sep=1pt] {};
			\node[inner sep=0,anchor=west,text width=2.5cm] (note1) at (1.7,2.73) {$10$};
			\node[inner sep=0,anchor=west,text width=2.5cm] (note1) at (4.22,3.8) {$11$};
			\node[inner sep=0,anchor=west,text width=3.3cm] (note1) at (-0.3,0) {$3$};
			\node[inner sep=0,anchor=west,text width=3.3cm] (note1) at (-0.3,3.6) {$3$}; 
			\node[inner sep=0,anchor=west,text width=3.3cm] (note1) at (2.7,6.1) {$3$};
			\node[inner sep=0,anchor=west,text width=3.3cm] (note1) at (6.8,6.1) {$3$};
			\node[inner sep=0,anchor=west,text width=3.3cm] (note1) at (9.2,3.6) {$3$};
			\node[inner sep=0,anchor=west,text width=3.3cm] (note1) at (9.2,0) {$3$};
			\node[inner sep=0,anchor=west,text width=3.3cm] (note1) at (6.8,-3.1) {$3$};
			\node[inner sep=0,anchor=west,text width=3.3cm] (note1) at (2.7,-3.1) {$3$};
			\node[inner sep=0,anchor=west,text width=3.3cm] (note1) at (0.47,4.3) {$1$};
			\node[inner sep=0,anchor=west,text width=3.3cm] (note1) at (1.47,5.1) {$2$};
			\node[inner sep=0,anchor=west,text width=3.3cm] (note1) at (4.2,6.2) {$4$};
			\node[inner sep=0,anchor=west,text width=3.3cm] (note1) at (5.4,6.2) {$5$};
			\node[inner sep=0,anchor=west,text width=3.3cm] (note1) at (7.7,5.1) {$2$};
			\node[inner sep=0,anchor=west,text width=3.3cm] (note1) at (8.3,4.5) {$1$};
			\node[inner sep=0,anchor=west,text width=3.3cm] (note1) at (9.2,2.4) {$6$};
			\node[inner sep=0,anchor=west,text width=3.3cm] (note1) at (9.2,1.2) {$7$};
			\node[inner sep=0,anchor=west,text width=3.3cm] (note1) at (8.39,-1) {$8$};
			\node[inner sep=0,anchor=west,text width=3.3cm] (note1) at (7.59,-2) {$9$};
			\node[inner sep=0,anchor=west,text width=3.3cm] (note1) at (5.4,-3.2) {$7$};
			\node[inner sep=0,anchor=west,text width=3.3cm] (note1) at (4.2,-3.2) {$6$};
			\node[inner sep=0,anchor=west,text width=3.3cm] (note1) at (1.6,-2) {$9$};
			\node[inner sep=0,anchor=west,text width=3.3cm] (note1) at (0.6,-1) {$8$};
			\node[inner sep=0,anchor=west,text width=3.3cm] (note1) at (-0.3,1.2) {$5$};
			\node[inner sep=0,anchor=west,text width=3.3cm] (note1) at (-0.3,2.4) {$4$};
			\draw (6.6,3.87) node[circle,fill,inner sep=1pt] {};
			\draw (6.6,4.23) node[circle,fill,inner sep=1pt] {};
			\draw (6.2,4.43) node[circle,fill,inner sep=1pt] {};
			\draw (6.05,5.03) node[circle,fill,inner sep=1pt] {};
			\draw[dotted]  [line width=0.5mm](6,3) -- (6.6,3.87);
			\draw[dotted]  [line width=0.5mm](6.6,3.87) -- (6.6,4.23);
			\draw[dotted]  [line width=0.5mm](6.6,4.23) -- (6.2,4.43);
			\draw[dotted]  [line width=0.5mm](6.2,4.43) -- (6.05,5.03);
			\draw[dotted]  [line width=0.5mm](6.05,5.03) -- (6.6,6);
			\draw[thick,->] (6.05,5.03) -- (5.65,5.02);
			\draw[thick,->] (6.2,4.43) -- (5.9,4.27);
			\draw[thick,->] (6.6,4.23) -- (6.2,4.15);
			\draw[thick,->] (6.6,3.87) -- (6.2,3.897);
			\draw[thick,->] (6.3,5.45) -- (5.9,5.23);
			\draw[thick,->] (6.15,4.69) -- (5.8,4.63);
			\draw[thick,->] (6.35,4.33) -- (6.05,4.2);
			\draw[thick,->] (6.6,4.1) -- (6.3,4);
			\draw[thick,->] (6.35,3.5) -- (6,3.7);
			\end{tikzpicture}
		} %
		\caption{}
		\label{fig:torusflow3} 
	\end{figure}
	
\end{example}


\bibliographystyle{amsplain}
\providecommand{\bysame}{\leavevmode\hbox
to3em{\hrulefill}\thinspace}
\providecommand{\MR}{\relax\ifhmode\unskip\space\fi MR }
 \MRhref  \MR
\providecommand{\MRhref}[2]{
  \href{http://www.ams.org/mathscinet-getitem?mr=#1}{#2}
 } \providecommand{\href}[2]{#2}

\end{document}